\documentclass[11pt,letterpaper]{amsart}
\usepackage{amsmath, amsthm}
\usepackage{amsfonts}
\usepackage{amssymb}
\usepackage{enumerate}
\usepackage{graphicx}
\usepackage[table]{xcolor}
\setcounter{MaxMatrixCols}{30}
\usepackage[all,arc,curve,color,frame]{xy} 

\usepackage{amsmath}
\usepackage{amsfonts}
\usepackage{amssymb}
\usepackage{graphicx}%
\usepackage{color}
\usepackage{amsmath}
\usepackage{amsfonts}
\usepackage{amssymb}
\usepackage{graphicx}%
\usepackage{xypic}
\usepackage{tikz-cd}
\usepackage{color}
\usepackage{mathbbol}
\setcounter{MaxMatrixCols}{30}
\providecommand{\U}[1]{\protect\rule{.1in}{.1in}}
\newtheorem{theorem}{Theorem}

\newtheorem{corollary}[theorem]{Corollary}

\newtheorem{lemma}[theorem]{Lemma}

\newtheorem{proposition}[theorem]{Proposition}
\newtheorem{remark}[theorem]{Remark}

\tikzset{
  symbol/.style={
    draw=none,
    every to/.append style={
      edge node={node [sloped, allow upside down, auto=false]{$#1$}}}
  }
}

\title{Homotopy lifting, asymptotic homomorphisms, and traces}

\author{Tatiana Shulman}

\begin{document}





\maketitle

\begin{abstract}  The following homotopy lifting theorem is proved: Let $\phi, \psi: B \to D/I$ be homotopic $\ast$-homomorphisms and suppose
$\psi$ lifts to a (discrete) asymptotic homomorphism. Then $\phi$ lifts to a (discrete)
asymptotic homomorphism. Moreover the whole homotopy lifts.

\noindent We also prove a  completely positive version of this theorem  and a version where $\phi$ is replaced by an asymptotic homomorphism.
These are the first homotopy lifting theorems that hold for all separable C*-algebras.

We obtain a lifting characterization  of  several important properties of C*-algebras and use them together with our homotopy lifting theorems to get the following applications:

\begin{itemize}

 \item MF-property is homotopy invariant;

\item If either $A$ or $B$ is exact, $A$ is homotopy dominated by $B$ and all amenable traces on $B$ are quasidiagonal, then all
amenable traces on $A$ are quasidiagonal;

\item If a C*-algebra $A$ is homotopy dominated by a nuclear C*-algebra $B$ and all (hyperlinear) traces on $B$ are MF, then all hyperlinear traces
on $A$ are MF.

\item Some of the extension groups introduced by Manuilov and Thomsen coincide.

\item The C*-algebra $qA$ from Cuntz's picture of KK-theory is always quasidiagonal.

\item Every homotopy symmetric C*-algebra is MF. 

\end{itemize}

\end{abstract}

\tableofcontents

\bigskip

\bigskip
\section{Introduction}

Homotopy invariants are among the most important tools in C*-algebra theory.   For instance, both K-theory and KK-theory are homotopy invariant in a very natural way. It also sometimes occurs that a property of C*-algebras does not seem to have anything topological in its nature, yet nevertheless turns out to be homotopy invariant.

 An example of such a property is quasidiagonality, as was discovered by Voiculescu \cite{Voiculescu}. Quasidiagonality is one of the central notions in C*-algebra theory and is closely related with several other approximation and embedding properties. These include  the MF-property, i.e., embeddability into $\frac{\prod M_n}{\oplus M_n}$, as well as approximation properties for traces. Following  the seminal paper \cite{TWW} such trace approximation properties have come to play a crucial role in the classification program for simple nuclear C*-algebras.
 In contrast to quasidiagonality it is not known whether all these properties are homotopy invariant.

 \medskip

 In this paper  we develop a novel approach to  homotopy invariance questions  by proving  homotopy lifting theorems that hold for all separable C*-algebras.

Homotopy lifting is a noncommutative analogue of homotopy extension which is fundamental in topology. It requires that if two morphisms are homotopic and one of them lifts, then the whole homotopy lifts. 


Our homotopy lifting theorems involve asymptotic homomorphisms  -- a central object of E-theory of Connes and Higson, i.e., families of maps $\phi_{\lambda}$, $\lambda\in \Lambda$, that are asymptotically multiplicative, asymptotically linear etc. When $\Lambda$ is discrete, such families are usually called  discrete asymptotic homomorphisms.   By lifting of an (asymptotic) homomorphism $\phi_{\lambda}$ to an asymptotic homomorphism $\Phi_{\lambda}$ we will mean that  for each $\lambda$, $\Phi_{\lambda}$ is a lift of $\phi_{\lambda}$, following Manuilov and Thomsen (see e.g. \cite{Manuilov}). One can also consider a weaker, asymptotic instead of precise,  notion of lifting of asymptotic homomorphisms as suggested by Carrion and Schafhauser \cite{CSch} (see Preliminaries for precise definition).

\medskip

{\bf Theorem.} {\it (Theorem \ref{DiscreteHomotopyLifting}) Let $\phi, \psi: B \to D/I$ be homotopic homomorphisms and suppose $\psi$ lifts  to a (discrete) asymptotic homomorphism.
Then $\phi$ lifts  to a (discrete) asymptotic homomorphism. Moreover the whole homotopy lifts.}

\medskip

Our next result is a completely positive version of the homotopy lifting theorem ({\bf Theorems \ref{cpHomotopyLiftingCor1} and \ref{cpHomotopyLiftingCor2}}) that will be our main tool when working with properties involving completely positive maps, such as quasidiagonality and quasidiagonal traces.  We formulate it here in the following form.

\medskip

{\bf Theorem.} {\it Suppose $A$ is homotopy equivalent to $B$ and each $\ast$-homomorphism from $B$ to $D/I$ that has a cp lift  lifts to a cp (discrete) asymptotic homomorphism. Then each $\ast$-homomorphism from $A$ to $D/I$ that has a cp lift  lifts to a cp (discrete) asymptotic homomorphism.}

\medskip

The next result is a homotopy lifting theorem where one of two morphisms is an asymptotic homomorphism.

\medskip

{\bf Theorem.}{ (\it Theorem \ref{DiscreteHomotopyLiftingAsHom}) Let $\phi_{\lambda}: B \to A$, $\lambda\in [1, \infty)$, be an asymptotic homomorphism and $\psi: B \to A$ be a $\ast$-homomorphism that is homotopic to $\phi_{\lambda}, \lambda\in \Lambda$. Suppose $\psi$ lifts  to an asymptotic homomorphism. Then $\phi_{\lambda}$, $\lambda\in \Lambda$, lifts  to an asymptotic homomorphism.
 Moreover the whole homotopy lifts.}

\medskip

The key technical innovation underlying these results is the use of mapping cylinders in the study of homotopy lifting, together  with a new presentation of mapping cylinders by homogeneous relations, which turns out to be particularly well suited to lifting arguments.

\medskip

Our homotopy lifting theorems in fact hold not only for lifting but also for asymptotic lifting (e.g. in the sense of Carrion and Schafhauser).
These are the first homotopy lifting theorems that hold for all separable C*-algebras.

Prior to this paper, two homotopy lifting theorems were known in C*-theory.
Blackadar's homotopy lifting theorem states that if $A$ is a separable semiprojective C*-algebra, $\phi, \psi: A \to B/I$ are homotopic $\ast$-homomorphisms and $\psi$ lifts to a $\ast$-homomorphism, then  $\phi$ and the whole homotopy lift \cite{Blackadar}. This is a noncommutative analogue of Borsuk's Homotopy Extension Theorem for absolute neighborhood retracts.
 The other homotopy lifting theorem was obtained recently by Carrion and Schafhauser \cite{CSch}. It states that if a separable C*-algebra $A$ is an inductive limit of semiprojective C*-algebras, $\phi_t, \psi_t: A \to B/I$ are homotopic  asymptotic homomorphisms and $\psi_t$ lifts asymptotically, then  $\phi_t$ and the whole homotopy lift asymptotically \cite{CSch}.

It is interesting to compare  our homotopy lifting theorem for asymptotic homomorphisms with Carrion-Schafhauser homotopy lifting theorem.
 Carrion and Schafhauser allow both morphisms to be asymptotic homomorphisms, but $A$ must be an inductive limit of semiprojective C*-algebras.  We require one of the morphisms to be an actual homomorphism but instead we have arbitrary $A$ and lifting instead of asymptotic lifting.   For example our theorem implies that any asymptotic homomorphism from a cone lifts, while Carrion-Schafhauser theorem says only that it lifts asymptotically.


\medskip

Liftings of (asymptotic) homomorphisms to asymptotic homomorphisms were first considered and proved useful by Manuilov and Thomsen in their series of works on extensions of C*-algebras. Here we prove that lifting to asymptotic homomorphisms is very useful for studying other important concepts as well.   We show  that the important notions of quasidiagonality and  MF property are in fact lifting properties, where lifting is meant to be the lifting to discrete asymptotic homomorphisms ({\bf Theorem  \ref{characterizationQD}, Theorem \ref{characterizationMF}}). This is somewhat surprising, since the MF-property is by definition an embedding property and quasidiagonality is rather an approximation property, so both do not look like they have anything to do with liftings at first glance. We also prove that the approximation properties for traces -- the properties of a trace of being quasidiagonal and MF, respectively -- admit a characterization in terms of liftings to discrete asymptotic homomorphisms ({\bf Proposition \ref{propositionMFtrace}}).

These characterizations together with our homotopy lifting theorems allow to study invariance of all these C*-algebraic properties under homotopies.

The first of the applications deals with the MF-property.


\medskip

{\bf Theorem.} {\it (Theorem \ref{MFhominv}) If $A$ is homotopically dominated by $B$, and $B$ is MF, then $A$ is also MF.  In particular, MF property is homotopy invariant.}

\medskip

Voiculescu's result that quasidiagonality is invariant under homotopies  is also a consequence of our homotopy lifting theorem ({\bf Corollary \ref{QDhominv}}).

The question of whether all amenable traces are quasidiagonal is a famous open question (see \cite{BrownTraces}). By the celebrated result of Tikuisis-White-Winter  \cite{TWW} (generalized in   \cite{Gabe} and \cite{Schafhauser} to the class of all exact C*-algebras)  every faithful, amenable trace on an exact C*-algebra satisfying the UCT  is
quasidiagonal.
  Brown, Carrion and White  proved that any amenable trace on a cone C*-algebra is quasidiagonal \cite{BCW} (see \cite{sectionsCones} for another proof). This result has a homotopy flavour since cones are contractible.    In \cite{Neagu} Neagu specifically raised a question of whether the property that all amenable
traces are quasidiagonal is homotopy invariant. He proved that if $A$ is a separable exact C*-algebra with a faithful amenable trace, $A$ is homotopy dominated
by a C*-algebra $B$ and all amenable traces on $B$ are quasidiagonal, then all
amenable traces on $A$ are quasidiagonal.
The following theorem covers the aforementioned results from \cite{BCW} and \cite{Neagu} as particular cases.

\medskip

{\bf Theorem.} {\it (Theorem \ref{qdTracesExact}, Corollary \ref{BExact}) If either $A$ or $B$ is exact, $A$ is homotopy dominated by $B$ and all amenable traces on $B$ are quasidiagonal, then all
amenable traces on $A$ are quasidiagonal.}

\medskip

Another open problem is whether every hyperlinear trace is MF. Examples of C*-algebras with all hyperlinear traces being MF include many crossed product C*-algebras \cite{RainoneSch}. In the recent preprint  \cite{sectionsCones}  the author showed that any hyperlinear trace on any cone C*-algebra is MF. Here we get a wide generalization of the latter result.

\medskip

{\bf Theorem.} {\it (Corollary \ref{BNuclear}) If a C*-algebra $A$ is homotopy dominated by a nuclear C*-algebra $B$ and all (hyperlinear) traces on $B$ are MF, then all hyperlinear traces
on $A$ are MF.}

\medskip

One can also replace nuclearity by Hilbert-Schmidt stability ({\bf Corollary \ref{HilbertSchmidtStable}}).

 The next application deals with extension groups. It was proved by Manuilov and Thomsen that several extension groups $Ext_{\ast\ast}(SA, B)$ are equal \cite{MT15} and they asked whether in their result one can get rid of the suspension. We prove here that some of these equalities can be unsuspended ({\bf Theorem \ref{ExtCoincide}}).
 
 One further application of our homotopy lifting theorems concerns homotopy symmetric C*-algebras. Recall that E-theory was introduced by Connes and Higson in their celebrated paper   \cite{ConnesHigson}. It is defined as the set of homotopy classes of asymptotic homomorphisms from  $SA$ to $SB\otimes K$: $E(A, B) = [[SA, SB\otimes K]]$. In \cite{DadarlatLoring}, Dadarlat and Loring investigated when E-theory admits an unsuspended description. They proved that homotopy symmetric C*-algebras -- that is, C*-algebras for which $id_A$ is invertible  in the semigroup $[[A, A\otimes K]]$ -- are precisely those   for which E-theory can be unsuspended. 

In \cite{DadarlatPennig} Dadarlat and Pennig proved that nuclear homotopy symmetric C*-algebras are quasidiagonal.  We prove here that every homotopy symmetric C*-algebra is MF ({\bf Theorem \ref{homsymm}}). Since for nuclear C*-algebras the MF property is equivalent to quasidiagonality, our theorem recovers the aforementioned result of Dadarlat and Pennig.

Our last application deals with a C*-algebra that plays an important role in KK-theory. In \cite{Cuntz} Cuntz has introduced an alternative, more algebraic, description of the bifunctor $KK(A, B)$  based on the use of the C*-algebra $qA$ associated with a C*-algebra $A$.
In Cuntz's picture of KK-theory elements of $KK(A, B)$ are represented by homotopy classes of $\ast$-homomorphisms from $qA$ to  $B\otimes \mathcal K$, where $K$ is the C*-algebra of all compact operators on a Hilbert space.

We prove  here that  the C*-algebra $qA$ is always quasidiagonal, independently on what $A$ is  ({\bf Theorem \ref{qA}}).

\bigskip

\noindent {\bf Acknowledgments. } The author is grateful to Dominic Enders for  numerous discussions and very useful comments  on a draft of this paper.
  The author is partially supported by a grant from the Swedish Research Council.

\section{Preliminaries}

{\bf Asymptotic homomorphisms. }

\medskip

{\bf Definition} (\cite{ConnesHigson}). An {\it asymptotic homomorphism} from $A$ to $B$ is a family of maps $(f_{\lambda})_{\lambda\in [0,\infty)}: A \to B$ satisfying the following properties:

\medskip

(i) for any $a\in A$, the mapping $[0, \infty) \to B$ defined by the rule $\lambda \to f_{\lambda}(a)$  is continuous;

(ii) for any $a, b\in A$ and $\mu_1, \mu_2\in \mathbb C$, we have

\begin{itemize}
\item  $\lim_{\lambda\to \infty} \|f_{\lambda}(a^*) - f_{\lambda}(a)^*\| = 0;$
\item  $\lim_{\lambda\to \infty} \|f_{\lambda}(\mu_1a + \mu_2b) - \mu_1f_{\lambda}(a) - \mu_2f_{\lambda}(b)\| = 0;$
\item $\lim{\lambda\to \infty}  \|f_{\lambda}(ab) - f_{\lambda}(a)f_{\lambda}(b)\| = 0.$
\end{itemize}

We will call $(f_{\lambda})_{\lambda\in \Lambda}: A \to B$, where $\Lambda$ is a directed set,   a {\it discrete asymptotic homomorphism} if the condition (ii) above is satisfied and for each $a\in A$ one has $\sup_{\lambda} \|f_{\lambda}(a)\|< \infty$. (For usual asymptotic homomorphisms the last condition holds automatically, see \cite{ConnesHigson} or \cite{Dadarlat2}). In this paper discrete asymptotic homomorphisms mostly will be indexed by $\Lambda = \mathbb N$.

Two (discrete) asymptotic homomorphisms $(f_{\lambda})_{\lambda\in [0, \infty)}, (g_{\lambda})_{\lambda\in \Lambda}: A \to B$ are {\it equivalent} if for any $a\in A$, we have $\lim_{\lambda\to \infty} \|f_{\lambda}(a)- g_{\lambda}(a)\| = 0.$

Two (discrete) asymptotic homomorphisms $(f_{\lambda})_{\lambda\in \Lambda}, (g_{\lambda})_{\lambda\in \Lambda}: A \to B$ are {\it homotopy equivalent} if there exists an asymptotic homomorphism $(\Phi_{\lambda})_{\lambda\in \Lambda}: A \to B\otimes C[0, 1]$ such that $ev_0\circ \Phi_{\lambda} = f_{\lambda}, \; ev_1\circ \Phi_{\lambda} = g_{\lambda},$ $\lambda\in \Lambda$.

A (discrete) asymptotic homomorphism $(f_{\lambda})_{\lambda\in \Lambda}$ is {\it equicontinuous} if for any $a_0\in A$ and $\epsilon >0$ there is a $\delta>0$ such that $\|f_{\lambda}(a) - f_{\lambda}(a_0)\|<\epsilon$ for every $\lambda\in \Lambda$ whenever $\|a-a_0\|\le\delta$.

\medskip

Following Manuilov and Thomsen we will say that a $\ast$-homomorphism $f: A\to B/I$ (an asymptotic homomorphism $f_t: A \to B/I, t\in [1, \infty)$, respectively) {\it lifts to an asymptotic homomorphism } $\phi_t$, $t\in [1, \infty)$,  if $q\circ \phi_t = f$, for each $t$ ($q\circ \phi_t = f_t$, for each $t$, respectively).

\medskip

 In \cite{CSch} Carrion and Schafhauser considered a weaker notion of lifting of asymptotic homomorphisms which we will call here {\it an asymptotic lifting}.  Namely, a $\ast$-homomorphism $f: A\to B/I$ (an asymptotic homomorphism $f_t: A \to B/I, t\in [1, \infty)$, respectively) {\it  asymptotically lifts to an asymptotic homomorphism}  $\phi_t$ $t\in [1, \infty)$, if $\lim_{t\to \infty} q\circ \phi_t(a) = f(a)$, for any $a\in A$ ($\lim_{t\to \infty} \|q\circ \phi_t(a) - f_t(a)\| = 0$, for any $a\in A$, respectively).

  We define lifting and asymptotic lifting of discrete asymptotic homomorphisms similarly.


We will call a not necessarily linear map {\it positive} if  it sends positive elements to positive elements.


\bigskip

{\bf Homogeneous relations.}

\medskip

We will say that a non-commutative $\ast$-polynomial $p(x_1, \ldots, x_n)$ is {\it $d$-homogeneous} if $p(tx_1, \ldots, tx_n) = t^d p(x, y)$
for all real scalars $t$.  We call $d$  the {\it degree of homogeneity} of $p$.

For an NC $\ast$-polynomial $p(x_1, \ldots, x_n)$,   its {\it homogenization} $\tilde p(h, x_1, \ldots, x_n)$ is the homogeneous NC $\ast$-polynomial derived from $p$ by padding monomials on the left with
various powers of $h$. For example, if $$p(x_1, x_2, x_3) = x_1^4x_3 - x_2^*x_1 + x_1^*,$$ then
$$\tilde p (h, x_1, x_2, x_3) = x_1^4x_3 - h^3x_2^*x_1 + h^4x_1^*.$$

Let $A$ be a separable unital C*-algebra given by presentation
\begin{equation}\label{PresentationATAMS}A = \left\langle x_1, x_2, \ldots\;|\; -c_i\le x_i\le c_i,\; R_k(x_1, x_2, \ldots) =0, i, k\in \mathbb N\right\rangle.\end{equation}  Here and everywhere throughout this paper each
NC $\ast$-polynomial depends only on finitely many variables. By \cite[Lemma 7.3]{LoringShulman} any separable C*-algebra is of the form (\ref{PresentationATAMS}).  In \cite[Lemma 7.1]{LoringShulman} the following presentation  for the cone $CA$  was found\footnote{ In \cite[Lemma 7.1]{LoringShulman} there was assumed that the relations of $A$ do  not  have free term, but in fact this requirement can be omitted. }:
\begin{multline}\label{ConePresentationTAMS}  CA =  C^*\left\langle  h, x_1, x_2, \ldots \; \bigg |\; \begin{array}{c} 0\le h\le 1, -c_ih\le x_i\le c_ih, \; [h, x_i] = 0, \\ \tilde R_k(h, x_1, x_2, \ldots) = 0, \; i, k \in \mathbb N  \end{array} \right\rangle \end{multline}

Thus the cone has a presentation where every relation is homogeneous.

We note that if $A$ is non-unital, then, as the proof of \cite[Lemma 7.1]{LoringShulman} shows, all the elements $x_i$ and $x_ih^k$, $k\in \mathbb N$,  belong to $CA$ and generate it.

\medskip

We use notation $A^+$  for the minimal unitization of $A$. Throughout this paper $A$ and $B$ are mostly  separable with some exceptions.

\medskip

{\bf Traces}

\medskip

Recall that a trace $\tau$ on a C*-algebra $A$ is {\it amenable} if there is a sequence of   ccp maps $\phi_n: A \to M_{k_n}$, $n\in \mathbb N$, such that
$$ \lim_{n\to \infty} \|\phi_n(ab)- \phi_n(a)\phi_n(b)\|_2=0 \;\; \text{and} \;\; \lim_{n\to \infty}|\tau(a)- tr \phi(a)|=0,$$ for any $a, b\in A$.

A trace $\tau$ is {\it quasidiagonal} if there is a sequence of   ccp maps $\phi_n: A \to M_{k_n}$, $n\in \mathbb N$, such that
$$ \lim_{n\to \infty} \|\phi_n(ab)- \phi_n(a)\phi_n(b)\|=0 \;\; \text{and} \;\; \lim_{n\to \infty}|\tau(a)- tr \phi(a)|=0,$$  for any $a, b\in A$.

If in the the definition of an amenable trace we drop the requirement that  $\phi_n$'s are ccp, then we obtain the notion of a hyperlinear trace. Precisely, a trace $\tau$ is {\it hyperlinear} if there is a sequence of  maps $\phi_n: A \to M_{k_n}$, $n\in \mathbb N$, such that
\begin{multline*}  \lim_{n\to \infty} \|\phi_n(ab)- \phi_n(a)\phi_n(b)\|_2=0 \;\; \lim_{n\to \infty}\|\phi_n(\lambda a+\mu b)- \lambda\phi_n(a)-\mu \phi_n(b)\|_2 =0, \\ \lim_{n\to \infty} \|\phi_n(a^*) - \phi_n(a)^*\|_2=0,\;\; \sup_{n\in \mathbb N} \|\phi_n(a)\|<\infty, \;\text{and} \;\; \lim_{n\to \infty}|\tau(a)- tr \phi(a)|=0,\end{multline*} for any $a, b\in A$, $\lambda, \mu\in \mathbb C$.

 \begin{remark}\label{hypTraceReformulation} Equivalently, one can say that $\tau$ is {\it hyperlinear}  if $\tau = tr\circ f$, for some $\ast$-homomorphism $f: A \to \prod M_{k_n}/\oplus_2 M_{k_n}$, for some $k_n$'s,  where $\oplus_2 M_{k_n}$ is the ideal of all sequences that converge to zero in the 2-norm and $tr$ is a  trace on $\prod M_{k_n}/\oplus_2 M_{k_n}$ defined by the formula
 $tr([(T_n)_{n\in \mathbb N}]) = \lim_{n\to \omega} tr T_n$, where $\omega$ is some non-trivial ultrafilter on $\mathbb N$. One also can consider the ideal $\oplus_{2, \omega} M_{k_n} = \{(T_n)_{n\in \mathbb N}\;|\; \lim_{n\to \omega} \|T_n\|_2 = 0\}$ of $\prod M_{k_n}$. The C*-algebra $\prod M_{k_n}/\oplus_{2, \omega} M_{k_n}$, called  the tracial ultraproduct of matrix algebras,  is a von Neumann algebra with the faithful trace $tr([(T_n)_{n\in \mathbb N}]) = \lim_{n\to \omega} tr T_n$. It is easy to show that a trace $\tau$ is hyperlinear if and only if $\tau = tr\circ f$, for some $\ast$-homomorphism $f: A \to \prod M_{k_n}/\oplus_{2, \omega} M_{k_n}$ and some $k_n$'s.

\noindent One can also replace  $\prod M_n/\oplus_{2, \omega} M_{n}$ by $\mathcal R^{\omega}$.
\end{remark}

If in the definition of a quasidiagonal trace one drops the requirement that $\phi_n$'s are ccp, one obtains the notion of an MF-trace. Precisely,
a trace $\tau$ is {\it MF} if there is a sequence of  maps $\phi_n: A \to M_{k_n}$, $n\in \mathbb N$, such that
\begin{multline*}  \lim_{n\to \infty} \|\phi_n(ab)- \phi_n(a)\phi_n(b)\|=0 \;\; \lim_{n\to \infty}\|\phi_n(\lambda a+\mu b)- \lambda\phi_n(a)-\mu \phi_n(b)\| =0, \\ \lim_{n\to \infty} \|\phi_n(a^*) - \phi_n(a)^*\|=0,\;\; \sup_{n\in \mathbb N} \|\phi_n(a)\|<\infty, \; \text{and} \;\; \lim_{n\to \infty}|\tau(a)- tr \phi(a)|=0,\end{multline*} for any $a, b\in A$, $\lambda, \mu\in \mathbb C$.

\begin{remark} In the definition of an MF-trace  one can additionally require the maps $\phi_n$ to be $\ast$-linear   \cite[Prop. 2.2]{RainoneSch}. The same argument  works also for  hyperlinear traces.
\end{remark}

Clearly every quasidiagonal trace is amenable and every MF-trace is hyperlinear. Whether the converse implications hold is an open problem.



\medskip

\section{A presentation for a mapping cylinder}

Suppose $\psi: B\to A$ is a $\ast$-homomorphism. Recall that the mapping cylinder $Z_{\psi}$ is the C*-algebra
$$Z_{\psi} = \{(\eta, b)\;|\; \eta\in C[0, 1]\otimes A, b\in B, \eta(0) = \psi(b)\} \subset \left(C[0, 1]\otimes A\right)\oplus B.$$

There is a natural embedding of $B$ into $Z_{\psi}$ defined by $$b\mapsto (1\otimes \psi(b), b),$$ $b\in B$. So everywhere below we consider $B$ as a C*-subalgebra of $Z_{\psi}$. There is also a canonical embedding of $CA$ into $Z_{\psi}$ defined by $$\eta\mapsto (\eta, 0),$$ $\eta \in CA$.  There is a natural short exact sequence $$0\to CA\to Z_{\psi} \to B\to 0$$ that splits via the embedding of $B$ into $Z_{\psi}$ described above.


In this chapter we will assume that $B$ and $A$ are separable.

It was discovered by Thiel \cite{Thiel} that  a mapping cylinder has a presentation involving  homogeneous relations. He used it to prove that being an inductive limit of semiprojective C*-algebras is a homotopy invariant property.
Here we give another presentation for a mapping cylinder that we find somewhat more transparent than the one in \cite{Thiel}. It  also involves homogeneous relations which will be crucial for the rest of the paper.

\medskip

We will use notation $\mathbf x = \{x_1, x_2, \ldots\}$ for the set of generators (also $\mathbf X$ for a tuple $\{X_1, X_2, \ldots\}$ in some C*-algebra $A$) and $\mathbf R = \{R_1, R_2, \ldots\}$ for the set of relations. In this paper all the relations will be either noncommutative $\ast$-polynomial equalities and inequalities or inequalities for the norms of generators.   We will write $C^*\langle \mathbf x\;|\;\mathbf R\rangle $ for the corresponding universal C*-algebra. We will write $\mathbf R(\mathbf X) =0$ meaning that $X_1, X_2, \ldots$ satisfy all the relations.

\begin{lemma}\label{PresentationA} Let $B = C^*\langle \mathbf x \;|\; -\mathbf c \le \mathbf x\le c, \;  \mathbf p(\mathbf x) = 0, \mathbf c>0\rangle$
and let $\psi: B \to A$ be a $\ast$-homomorphism. Then $A$ can be written as the universal C*-algebra
$$A = \langle \mathbf z, \mathbf y\;|\; -\mathbf d \le \mathbf z\le \mathbf d,  -\mathbf e \le \mathbf y\le \mathbf e, \;  \mathbf p(\mathbf z) = 0, \mathbf q(\mathbf z, \mathbf y) =0, \mathbf d>0, \mathbf e>0\rangle ,$$
such that $\psi(\mathbf x) = \mathbf z$.
\end{lemma}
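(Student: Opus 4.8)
The idea is to take the given presentation of $B$ and "push it forward" along $\psi$, adding new generators for a set of generators of $A$ together with fresh relations recording (i) the bounds on the new generators, (ii) the fact that the images $\mathbf z = \psi(\mathbf x)$ still satisfy the relations $\mathbf p$, and (iii) all relations among $\mathbf z$ and the new generators $\mathbf y$ that hold in $A$. Concretely, since $A$ is separable it admits a presentation of the form (\ref{PresentationATAMS}); fix such generators, call a self-adjoint generating family $\mathbf y = \{y_1, y_2, \dots\}$ for $A$ with $-\mathbf e \le \mathbf y \le \mathbf e$. Also set $\mathbf z = \psi(\mathbf x)$; these are elements of $A$ with $-\mathbf c \le \mathbf z \le \mathbf c$ (so we may take $\mathbf d = \mathbf c$, or any larger bound), and together $\mathbf z \cup \mathbf y$ generate $A$ since $\mathbf y$ already does. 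Let $\mathbf q(\mathbf z, \mathbf y) = 0$ denote the (countable) collection of \emph{all} NC $\ast$-polynomial relations satisfied by the tuple $(\mathbf z, \mathbf y)$ in $A$ that involve at least one $y$-variable — or, more economically, a specific set of relations coming from the chosen presentation of $A$ rewritten in terms of $\mathbf z \cup \mathbf y$; the $\mathbf p(\mathbf z) = 0$ relations are listed separately because they will be reused later in the paper when homogenizing.

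The verification is the standard universal-property argument. Let $\tilde A$ denote the universal C*-algebra on the right-hand side, with canonical generators $\bar{\mathbf z}, \bar{\mathbf y}$. First, the elements $\mathbf z, \mathbf y \in A$ satisfy all the listed relations by construction, so by universality of $\tilde A$ there is a surjective $\ast$-homomorphism $\pi: \tilde A \to A$ with $\pi(\bar{\mathbf z}) = \mathbf z$, $\pi(\bar{\mathbf y}) = \mathbf y$; surjectivity holds because $\mathbf z \cup \mathbf y$ generates $A$. For injectivity, one must exhibit an inverse. Here is where one uses that $\tilde A$ was built to contain "all" relations of $A$: choose the relation set $\mathbf q$ rich enough that the chosen presentation of $A$ from (\ref{PresentationATAMS}), in the variables $\mathbf y$ alone, is a consequence of $\mathbf p(\bar{\mathbf z}) = 0$ and $\mathbf q(\bar{\mathbf z}, \bar{\mathbf y}) = 0$ inside $\tilde A$. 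Then the sub-C*-algebra of $\tilde A$ generated by $\bar{\mathbf y}$ receives a surjection from $A$ by universality of the presentation of $A$, and since $\bar{\mathbf z}$ is forced to be a limit of $\ast$-polynomials in $\bar{\mathbf y}$ — because in $A$ each $z_i = \psi(x_i)$ is such a limit, and we can add those approximate identities as further relations in $\mathbf q$ if needed — the whole of $\tilde A$ is generated by $\bar{\mathbf y}$, giving a surjection $A \to \tilde A$ inverse to $\pi$. Finally, $\psi(\mathbf x) = \mathbf z$ holds by the very choice of $\mathbf z$.

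The main technical point — and the only place requiring care — is arranging the relation set $\mathbf q$ so that $\bar{\mathbf z}$ lies in the C*-subalgebra of $\tilde A$ generated by $\bar{\mathbf y}$, i.e. that adjoining the $\mathbf z$-generators does not enlarge the algebra. If $\psi$ is surjective this is automatic; in general one records, for each $i$, a sequence of $\ast$-polynomials $w_{i,n}(\mathbf y)$ with $\|z_i - w_{i,n}(\mathbf y)\| \to 0$ in $A$ and imposes $\|\bar z_i - w_{i,n}(\bar{\mathbf y})\| \le 1/n$ as norm-relations in the presentation of $\tilde A$ (these are legitimate relations of the allowed type). With this addition, $\pi$ restricted to $C^*(\bar{\mathbf y}) \subseteq \tilde A$ is already surjective onto $A$ and $\tilde A = C^*(\bar{\mathbf y})$, so $\pi$ is an isomorphism. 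I expect essentially no obstacle beyond bookkeeping; the lemma is a soft "repackaging" statement, and its real content is the particular \emph{shape} of the presentation (separate homogeneous-ready blocks $\mathbf p(\mathbf z)$ and $\mathbf q(\mathbf z,\mathbf y)$ with order-unit-type bounds on all generators), which is what makes the subsequent homogenization of the mapping cylinder possible.
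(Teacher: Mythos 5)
Your overall plan — realize $A$ with the Loring--Shulman type presentation and rearrange generators so that $\psi(\mathbf x)$ appears as the block $\mathbf z$ — is the right idea, and coincides with what the paper does at a high level. But your execution has a real gap that the paper's proof is specifically designed to avoid.

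The gap is in how you choose $\mathbf y$. You first fix a presentation of $A$ of the form \eqref{PresentationATAMS} (with generators $\mathbf y$) and then separately set $\mathbf z = \psi(\mathbf x)$. As you yourself notice, this leaves it unclear whether $\bar{\mathbf z}$ lands in $C^*(\bar{\mathbf y})$ inside the universal algebra $\tilde A$, because the $z_i$ need not lie in the dense countable $\ast$-subalgebra $\mathcal D$ underlying the chosen presentation of $A$. Your proposed fix — imposing norm-approximation relations $\|\bar z_i - w_{i,n}(\bar{\mathbf y})\|\le 1/n$ — does patch the logical argument, but it produces a presentation that is \emph{not} of the form stated in the lemma. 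The lemma asserts that, apart from the order bounds on single generators ($-\mathbf d\le\mathbf z\le\mathbf d$, $-\mathbf e\le\mathbf y\le\mathbf e$), every relation is a $\ast$-polynomial \emph{equality} of the shape $\mathbf p(\mathbf z)=0$ or $\mathbf q(\mathbf z,\mathbf y)=0$. A relation $\|\bar z_i - w_{i,n}(\bar{\mathbf y})\|\le 1/n$ is an inhomogeneous polynomial \emph{inequality} and is not one of the allowed types. This is not just a cosmetic issue: Theorem \ref{presentation} homogenizes the blocks $\mathbf p$ and $\mathbf q$ to $\tilde{\mathbf p}$, $\tilde{\mathbf q}$, and the whole mechanism there (the scaling trick $\tau(\mathbf z)=\pi(\mathbf z')/\lambda$ and the identity $\tilde p(\lambda\mathbf a, \lambda)=\lambda^d p(\mathbf a)$) is set up for polynomial \emph{equalities}. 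Your inequalities, which have a nontrivial constant term, would not homogenize into anything the proof of Theorem \ref{presentation} can use, so the subsequent construction of the mapping-cylinder presentation would not go through with your version of the lemma.

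The paper sidesteps this by reversing the order of choices: it \emph{starts} from the elements $z_i := \psi(x_i)$, enlarges $\{z_i\}$ to a dense countable subset of $A_{sa}$, closes up under $\ast$-polynomials over $\mathbb Q + i\mathbb Q$ to get a dense countable $\mathbb F$-$\ast$-subalgebra $\mathcal D\supseteq\{z_i\}$, and then takes $\mathbf y$ to enumerate $\mathcal D\setminus\{z_1,z_2,\ldots\}$. Since $\mathbf z\cup\mathbf y$ is exactly $\mathcal D$, (a) every element of $\mathcal D$ is itself a generator, so the single-generator order bounds control the norm of every element of $\mathcal D$ (this is what makes every representation of the universal algebra bounded, hence extendable to $A=\overline{\mathcal D}$); and (b) the algebraic operations of $\mathcal D$ are already equality relations among generators, so no approximation inequalities are ever needed and the block $\mathbf q$ really does consist only of polynomial equalities. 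If you modify your construction to build $\mathcal D$ around $\psi(\mathbf x)$ in this way, your argument becomes essentially the paper's.
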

\begin{proof} The proof goes along the lines of \cite[Lemma 7.3]{LoringShulman}. Let $\mathbf x = \{x_1, x_2, \ldots\}$ and $\mathbf p = \{p_1, p_2, \ldots\}$. To the collection $z_i: = \psi(x_i)\in A$, $i\in \mathbb N$, we add more elements of $A_{sa}$ to obtain a dense countable subset of $A_{sa}$. We apply to this sequence all polynomials over $\mathbb F= \mathbb Q + i \mathbb Q$ in countably many variables. This results in a countable, dense $\mathbb F$-$\ast$-subalgebra $\mathcal D$ of $A$. Enumerate $\mathcal D \setminus \{z_1, z_2, \ldots\}$ as $y_1, y_2, \ldots$. The algebraic
operations for $\mathcal D$ can be encoded in $\ast$-polynomial relations. For example, if $\alpha y_j = y_k$ for
some $\alpha\in \mathbb F$, then we use the relation $\alpha y_j - y_k = 0$. If $(z_k^2y_j)^*(z_k^2y_j)  = y_i$, then we use the relation
 $(z_k^2y_j)^*(z_k^2y_j)  - y_i = 0$, and so forth. We now add to these relations the relations $p_i(z_1, z_2, \ldots) = 0$ (which are already included if $p_i$'s have rational coefficients) and the relations $-d_i\le z_i\le d_i$, $-e_i \le y_i\le e_i$, $i\in \mathbb N$, where $d_i$ is the norm of the element $z_i$ in $A$ and $e_i$ is the norm of the element $y_i$ in $A$. Then the proof of \cite[Lemma 7.3]{LoringShulman}   goes without change to show that $A$ is universal for these relations.
\end{proof}

For an NC *-polynomial $p$ in the noncommuting variables $\mathbf x, \mathbf x^*$ we define its homogenization $\tilde p$  to be the NC polynomial derived from $p$ by padding monomials on the left with
various powers of a new variable $h$ so that $\tilde p$ is homogeneous. For example, if $p(\mathbf x) = x_1^* - x_2^3$, then $\tilde p(\mathbf x, h)= h^2x_1^* - x_2^3$.

\begin{theorem}\label{presentation} Let $A$ be a unital C*-algebra, $\psi: B \to A$ a $\ast$-homomorphism, $B = C^*\langle \mathbf x \;|\; -\mathbf c \le \mathbf x\le \mathbf c, \;  \mathbf p(\mathbf x) = 0\rangle.$ Write $B$ and $A$ as in Lemma \ref{PresentationA}. Then the mapping cylinder $Z_{\psi}$ has presentation

\[Z_{\psi}\cong C^* \left\langle \mathbf x', \mathbf z', \mathbf y', h \bigg | \begin{array}{c} 0\le h\le 1, \;-\mathbf c \le \mathbf x' \le \mathbf c,\; -\mathbf d h \le \mathbf z' \le \mathbf d h, \; -\mathbf e h \le \mathbf y' \le \mathbf e h, \\  h \mathbf x'-\mathbf x'h = h\mathbf z'-\mathbf z'h = h\mathbf y'-\mathbf y'h =0  \\ h\mathbf x' = \mathbf z',\\ \mathbf p(\mathbf x')=0, \\ \mathbf{\tilde p}(\mathbf z', h)=0, \; \mathbf{\tilde q}(\mathbf z', \mathbf y', h)=0 \end{array}\right\rangle.\]

The canonical embedding of $B$ into $Z_{\psi}$ sends $\mathbf x$ to $\mathbf x'$. The canonical embedding of $CA \cong \left\langle \mathbf z'', \mathbf y'', k \bigg | \begin{array}{c} 0\le k\le 1, \; -\mathbf d k \le \mathbf z'' \le \mathbf d k, \; -\mathbf e k \le \mathbf y'' \le \mathbf e k, \\   k\mathbf z''-\mathbf z''k = k\mathbf y''-\mathbf y''k =0,\\ \mathbf{\tilde p}(\mathbf z'', k)=0, \; \mathbf{\tilde q}(\mathbf z'', \mathbf y'', k)=0 \end{array}\right\rangle$ into $Z_{\psi}$ sends $\mathbf z''\mapsto \mathbf z', \mathbf y''\mapsto \mathbf y', k\mapsto h$.
\end{theorem}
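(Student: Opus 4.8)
The plan is to realize the listed generators inside $Z_{\psi}$, use the universal property to get a $\ast$-homomorphism onto $Z_{\psi}$, and then prove it is injective by comparing the canonical extensions of the two algebras by $CA$ over $B$.

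\emph{A surjection onto $Z_{\psi}$.} In $Z_{\psi}\subset(C[0,1]\otimes A)\oplus B$ put $\mathbf X':=(1\otimes\psi(\mathbf x),\,\mathbf x)=(1\otimes\mathbf z,\,\mathbf x)$, $H:=(t\mapsto t\cdot 1_A,\,0)$, $\mathbf Z':=(t\mapsto t\mathbf z,\,0)$, $\mathbf Y':=(t\mapsto t\mathbf y,\,0)$. Using that $\mathbf z=\psi(\mathbf x)$ satisfies the relations of $A$ (as $\psi$ is a $\ast$-homomorphism), that $t\mapsto t\cdot 1_A$ is a central positive contraction, and that any homogenization satisfies $\widetilde r(t\mathbf a,\,t\cdot 1_A)=t^{\deg r}r(\mathbf a)$ (substituting $h=1_A$ recovers $r$), one checks routinely that these elements satisfy every relation of the presentation. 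Hence the universal algebra $\mathcal Z$ of the presentation maps onto $C^*(\mathbf X',\mathbf Z',\mathbf Y',H)$ via a $\ast$-homomorphism $\Pi$ sending $\mathbf x',\mathbf z',\mathbf y',h$ to $\mathbf X',\mathbf Z',\mathbf Y',H$. Moreover $\mathbf X'$ generates the canonical split copy of $B$ in $Z_{\psi}$, and $H,\mathbf Z',\mathbf Y'$ are exactly the images under the canonical embedding $CA\hookrightarrow Z_{\psi}$ of the generators of $CA$ in the presentation (\ref{ConePresentationTAMS}) built from the presentation of $A$ in Lemma \ref{PresentationA} (this incidentally proves the asserted presentation of $CA$). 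Since $Z_{\psi}=CA+B$ by the splitting of $0\to CA\to Z_{\psi}\to B\to 0$, the map $\Pi$ is onto $Z_{\psi}$.

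\emph{Comparing the extensions.} Let $J$ be the ideal of $\mathcal Z$ generated by $h$. Imposing $h=0$, the norm relations force $\mathbf z'=\mathbf y'=0$, the relation $h\mathbf x'=\mathbf z'$ becomes vacuous, and $\widetilde{\mathbf p},\widetilde{\mathbf q}$ become trivial, so $\mathcal Z/J\cong B$ via $\mathbf x'\mapsto\mathbf x$; since $\Pi$ intertwines the quotient maps onto $B$, the induced isomorphism $\mathcal Z/J\cong Z_{\psi}/CA$ is the identity of $B$. The cone presentation of $CA$ also yields a $\ast$-homomorphism $\gamma\colon CA\to\mathcal Z$, $k\mapsto h$, $\mathbf z''\mapsto\mathbf z'$, $\mathbf y''\mapsto\mathbf y'$ (its relations are among those of $\mathcal Z$), and $\Pi\circ\gamma$ is the canonical embedding $CA\hookrightarrow Z_{\psi}$; so $\gamma$ is injective and carries $CA$ isomorphically onto $C^*(\mathbf z',\mathbf y',h)\subseteq\mathcal Z$. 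The crucial point is that this subalgebra equals $J$. Granting it, $\Pi|_J\colon J\to CA$ is injective (its composition with the isomorphism $\gamma\colon CA\to J$ is the injective canonical embedding) and $\Pi(J)=CA$; then if $\Pi(a)=0$ the class of $a$ in $\mathcal Z/J\cong B$ is killed, so $a\in J$ and hence $a=0$. Thus $\Pi$ is an isomorphism, and transporting the two canonical embeddings through $\Pi^{-1}$ gives the final assertions of the statement.

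\emph{The crucial point, and the main obstacle.} The inclusion $C^*(\mathbf z',\mathbf y',h)\subseteq J$ is clear, since $\mathbf z'=h\mathbf x'$ and $-\mathbf eh\le\mathbf y'\le\mathbf eh$ puts $\mathbf y'$ into $\overline{h\mathcal Z}$. The reverse inclusion amounts to showing $C^*(\mathbf z',\mathbf y',h)$ is a two-sided ideal, i.e.\ that $\mathbf x'_i\mathbf z'_j=h\mathbf x'_i\mathbf x'_j$ and $\mathbf x'_i\mathbf y'_j$ lie in it; this is the main difficulty, because the homogenized relations $\widetilde{\mathbf q}(\mathbf z',\mathbf y',h)=0$ pin such products down only \emph{after} multiplication by $h$ — e.g.\ if $z_iz_j$ equals the generator $g$ of $A$ in the algebra $\mathcal D$ of Lemma \ref{PresentationA}, then $\widetilde{\mathbf q}$ yields $h^2\mathbf x'_i\mathbf x'_j=h\cdot g'$, with $g'$ the corresponding generator among the $\mathbf z'$'s and $\mathbf y'$'s, rather than $h\mathbf x'_i\mathbf x'_j=g'$. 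The surplus $h$ is removed by the following cancellation lemma for a central positive contraction $h$: if $ha=0$ and $\|a(1-g_\varepsilon(h))\|\to 0$ as $\varepsilon\to 0$, where the $g_\varepsilon$ are continuous, equal to $1$ near $1$ and vanishing near $0$, then $a=0$ — indeed $g_\varepsilon(h)=h\phi_\varepsilon(h)$ with $\phi_\varepsilon(0)=0$, so $ag_\varepsilon(h)=0$ and $a=a(1-g_\varepsilon(h))\to 0$. Applying this to $a=h\mathbf x'_i\mathbf x'_j-g'$ and to $a=\mathbf x'_i\mathbf y'_j-g''$ — for which $ha=0$ by $\widetilde{\mathbf q}$, and for which the norm relations $-\mathbf dh\le\mathbf z'\le\mathbf dh$, $-\mathbf eh\le\mathbf y'\le\mathbf eh$ (genuinely stronger than what $h\mathbf x'=\mathbf z'$ alone gives) force $\|a(1-g_\varepsilon(h))\|\to 0$ — produces the required identities and completes the proof.
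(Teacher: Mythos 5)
Your proof is correct, but it takes a genuinely different route from the paper's. The paper proves injectivity of the surjection $\Pi\colon\mathcal Z\to Z_\psi$ by showing that \emph{every irreducible representation of $\mathcal Z$ factors through $\Pi$}: since $h$ is central, any irrep $\pi$ has $\pi(h)=\lambda 1$, and the two cases $\lambda=0$ (where the order relations kill $\pi(\mathbf z'),\pi(\mathbf y')$ and $\pi$ factors through the quotient $B$) and $\lambda\neq 0$ (where one rescales $\pi(\mathbf z')/\lambda,\pi(\mathbf y')/\lambda$ and checks, using the homogenized relations and $\pi(h)^{M-i}=\lambda^{M-i}$, that $\pi$ factors through $\mathrm{ev}_\lambda$ on $A$) are handled separately. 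In contrast you prove injectivity by comparing the split extension $0\to CA\to Z_\psi\to B\to 0$ with a corresponding sequence $0\to J\to\mathcal Z\to \mathcal Z/J\to 0$, $J$ the ideal generated by $h$, after establishing the nontrivial identification $J=C^*(\mathbf z',\mathbf y',h)$. The paper's route is shorter and sidesteps the ideal bookkeeping; yours is more structural and makes explicit that $\mathcal Z$ is itself the mapping-cylinder-type extension of $B$ by $CA$. Your crucial ingredient — the cancellation lemma that a commuting pair with $ha=0$ and $\|a(1-g_\varepsilon(h))\|\to 0$ forces $a=0$ — is a valid and self-contained substitute for the representation-theoretic case split; note that it uses essentially the same two features the paper uses (centrality of $h$, and the order bounds $-\mathbf dh\le\mathbf z'\le\mathbf dh$, $-\mathbf eh\le\mathbf y'\le\mathbf eh$ controlling the primed generators by $h$), just packaged differently. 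One small point worth making explicit in your write-up: when you invoke the $\widetilde{\mathbf q}$-relations to identify $h\mathbf x_i'\mathbf x_j'$ and $\mathbf x_i'\mathbf y_j'$ with generators, you are relying on the specific form of Lemma \ref{PresentationA}, in which \emph{every} product of two generators of $A$ is again a listed generator and the corresponding algebraic relation appears in $\mathbf q$; this is what makes the cancellation close up to an ideal, so it deserves a sentence.
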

\begin{proof} Let $\mathcal U$ be the universal C*-algebra above. We are going to prove that $\mathcal U \cong Z_{\psi}$.
Define  a $\ast$-homomorphism $\theta: \mathcal U \to Z_{\psi}$ on the generators of $\mathcal U$ by
$$\theta (\mathbf x') = (1\otimes \psi(\mathbf x), \mathbf x) = (1\otimes \mathbf z , \mathbf x),$$
$$\theta(\mathbf z') = (t\otimes \mathbf z, 0),$$
$$\theta(\mathbf y') = (t\otimes \mathbf y, 0),$$
$$ \theta(h) = (t\otimes 1_A, 0).$$

To check that $\theta$ is indeed a $\ast$-homomorphism we need to check that all the relations of $\mathcal U$ are satisfied.
All of them but the last two are clearly satisfied. So we check here the last two.

Write each polynomial $p$ in $\mathbf p = (p_1, p_2, \ldots)$ as
$p = \sum_{k=0}^N p_k$ with $p_k$ being homogeneous of degree $k$. Its homogenization is  $$\tilde p(\mathbf s_1, s_2) = \sum_{k=0}^N {s_2}^{N-k} p_k(\mathbf s_1).$$ We have
\begin{equation}\label{PresentationEq1} \tilde p(\theta(\mathbf z'), \theta(h)) = \left(\tilde p(t\otimes \mathbf z, t\otimes 1_A), 0\right),
\end{equation}  and for any $s\in [0, 1]$
\begin{multline*}\tilde p(t\otimes \mathbf z, t\otimes 1_A)(s) = \left(\sum_{k=0}^N p_k(t\otimes \mathbf z)\left(t^{N-k}\otimes 1_A\right)\right)(s)\\
= \sum_{k=0}^N p_k(s \mathbf z)  s^{N-k} 1_A = \sum_{k=0}^N s^k p_k(\mathbf z) s^{N-k}\\=s^{N}\sum_{k=0}^N p_k(\mathbf z)
 = s^{N} p(\mathbf z) =0.
\end{multline*}
Substituting it to (\ref{PresentationEq1}) we obtain that $\tilde p(\theta(\mathbf z'), \theta(h))=0$. In the same way one checks that
$\tilde q(\theta(\mathbf z'), \theta(\mathbf y'), \theta(h))=0$.

 The image of $\theta$ contains both $B$ and $CA$. Since $B$ and $CA$ generate $Z_{\psi}$, $\theta$ is surjective.

To prove that $\theta$ is injective we will show that each irreducible representation of $\mathcal U$ factorizes through $\theta$.
So let $\pi$ be an irreducible representation of $\mathcal U$. Since $h$ is a central element of $\mathcal U$, $\pi(h) = \lambda 1$, for some $\lambda$. We will consider separately the case of $\lambda=0$ and the case of $\lambda\neq 0$.

{\bf Case $\lambda =0$.} Then $\pi(h)=0$. The norm conditions on $\mathbf z', \mathbf y'$ imply that $$\pi(\mathbf z')=\pi(\mathbf y') =0.$$  We define a $\ast$-homomorphism $\sigma: B \to B(H)$ on the generators of $B$ by
$\sigma(\mathbf x) = \pi(\mathbf x')$.  Let $pr_B: Z_{\psi}\to B$ be the homomorphism given by $(f, b) \mapsto b$. Then $\pi = \sigma\circ pr_B\circ \theta.$ Indeed
\begin{multline*} \sigma\circ pr_B\circ \theta(\mathbf x') = \pi(\mathbf x'),  \\ \sigma\circ pr_B\circ \theta(\mathbf z') =0 =\pi(\mathbf z'), \\ \sigma\circ pr_B\circ \theta(\mathbf y') =0=\pi(\mathbf y').\end{multline*}

{\bf Case $\lambda\neq 0$.} Since $\pi(\mathbf z') = \pi(h\mathbf x')$, we have $\pi(\mathbf x') = \pi(\mathbf z')/{\lambda}.$ Define a $\ast$-homomorphism $\tau: A \to B(H)$  on the generators of $A$ by
$$\tau(\mathbf z) = \pi(\mathbf x') = \pi(\mathbf z')/{\lambda},$$
$$\tau(\mathbf y) = \pi(\mathbf y')/{\lambda}.$$ Let us check that all the relations of $A$ are satisfied. We can write $p(\mathbf z)= \sum_{i=1}^M p^{(i)}(\mathbf z)$, $q(\mathbf z, \mathbf y)= \sum_{i=1}^N q^{(i)}(\mathbf z, \mathbf y)$ with $p^{(i)}, q^{(i)}$ being homogeneous of degree $i$.  Then

\begin{multline*} p(\tau(\mathbf z))  = p(\pi(\mathbf z')/\lambda) = \sum_{i=1}^M \frac{p^{(i)}(\pi(\mathbf z'))}{\lambda^i} = \frac{1}{\lambda^M}
\sum_{i=1}^M p^{(i)}(\pi(\mathbf z'))\lambda^{M-i} \\ =  \frac{1}{\lambda^M}
\sum_{i=1}^M p^{(i)}(\pi(\mathbf z'))\pi(h)^{M-i} = \frac{1}{\lambda^M} \tilde p(\pi(\mathbf z'), \pi(h))=  0. \end{multline*}

In the same way we obtain $$q(\tau(\mathbf z), \tau(\mathbf y))  = \frac{1}{\lambda^N} \tilde q(\pi(\mathbf z'), \pi(\mathbf y'), \pi(h))=0. $$
Thus $\tau$ is well-defined.  Let $ev_{t}: Z_{\psi} \to A$ be the homomorphism defined by $(f, b) \mapsto f(t)$. It is straightforward to check that $\pi$ and $\tau\circ ev_{\lambda} \circ \theta$ coincide on the generators of $\mathcal U$. Therefore $\pi =\tau\circ ev_{\lambda} \circ \theta$.
\end{proof}

\section{Mapping cylinders and homotopy lifting}

The first lemma is very well known (see e.g. \cite{LoringBook}).

\begin{lemma}\label{folklore}
Let $\pi: B\to B/I$ be a surjective $\ast$-homomorphism.
For any approximate unit  $\{u_{\lambda}\}$ in $I$ and any $x\in B$, one has $\limsup \|x(1- u_{\lambda})\| = \|\pi(x)\|$.
\end{lemma}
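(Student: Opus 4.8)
The statement is a standard fact: for a quotient map $\pi\colon B\to B/I$, an approximate unit $\{u_\lambda\}$ in $I$, and $x\in B$, we have $\limsup_\lambda\|x(1-u_\lambda)\|=\|\pi(x)\|$ (working in $B^+$ if $B$ is non-unital, so that $1-u_\lambda$ makes sense). I would split this into the two inequalities.

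\medskip

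\emph{Lower bound $\liminf_\lambda\|x(1-u_\lambda)\|\ge\|\pi(x)\|$.} Since $u_\lambda\in I$, the element $x u_\lambda$ lies in $I$, hence $\pi(x(1-u_\lambda))=\pi(x)-\pi(xu_\lambda)=\pi(x)$. Because $\pi$ is a $\ast$-homomorphism it is norm-decreasing, so $\|x(1-u_\lambda)\|\ge\|\pi(x(1-u_\lambda))\|=\|\pi(x)\|$ for every $\lambda$. This immediately gives $\liminf_\lambda\|x(1-u_\lambda)\|\ge\|\pi(x)\|$, which is slightly stronger than what the $\limsup$ formulation needs on this side.

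\medskip

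\emph{Upper bound $\limsup_\lambda\|x(1-u_\lambda)\|\le\|\pi(x)\|$.} This is the substantive direction. The key identity is $\|\pi(x)\|=\inf_{a\in I}\|x-a\|=\inf_{a\in I}\|x+a\|$, the definition of the quotient norm. Fix $\epsilon>0$ and choose $a\in I$ with $\|x-a\|\le\|\pi(x)\|+\epsilon$. Then estimate
\[
\|x(1-u_\lambda)\|\le\|(x-a)(1-u_\lambda)\|+\|a(1-u_\lambda)\|\le\|x-a\|+\|a(1-u_\lambda)\|\le\|\pi(x)\|+\epsilon+\|a(1-u_\lambda)\|,
\]
using $\|1-u_\lambda\|\le 1$ (here one uses that $0\le u_\lambda\le 1$, a property that can always be arranged for an approximate unit, e.g.\ the canonical one). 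Since $a\in I$ and $\{u_\lambda\}$ is an approximate unit for $I$, $\|a(1-u_\lambda)\|=\|a-au_\lambda\|\to 0$. Hence $\limsup_\lambda\|x(1-u_\lambda)\|\le\|\pi(x)\|+\epsilon$, and letting $\epsilon\to 0$ finishes this direction.

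\medskip

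Combining the two bounds gives $\limsup_\lambda\|x(1-u_\lambda)\|=\|\pi(x)\|$ (and in fact the limit exists and equals $\|\pi(x)\|$). The only mild subtlety—the ``main obstacle,'' though it is minor—is making sure the approximate unit may be taken with $0\le u_\lambda\le 1$ so that $\|1-u_\lambda\|\le 1$; if one only assumes an abstract approximate unit this bound still holds because approximate units in C*-algebras are by convention increasing nets of positive contractions, so no genuine difficulty arises. Everything else is the quotient-norm characterization plus the triangle inequality.
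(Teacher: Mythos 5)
Your proof is correct and is the standard argument. The paper itself gives no proof of Lemma~\ref{folklore}, citing only \cite{LoringBook}; your two-sided estimate---the lower bound from $\pi(x(1-u_\lambda))=\pi(x)$ together with $\pi$ being norm-decreasing, and the upper bound from the quotient-norm formula $\|\pi(x)\|=\inf_{a\in I}\|x-a\|$ combined with $\|a(1-u_\lambda)\|\to 0$---is exactly the textbook proof, and you correctly note the slightly stronger conclusion that the net $\|x(1-u_\lambda)\|$ actually converges to $\|\pi(x)\|$.
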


The following lemma is essentially contained in \cite{LoringShulman}. We write out its proof explicitly here as it will be used very often in this paper.

\begin{lemma}\label{AsHom1}  Let $p (x_1, \ldots, x_N) $ be a homogeneous NC $\ast$-polynomial (more generally, $p$ can be of more variables and homogeneous only in $x_1, \ldots, x_N$). Let $I \lhd B$, $\{i_{\lambda}\}$ a quasicientral approximate unit of $I$ relative to $B$, and $\pi: B \to B/I$  the canonical surjection.
 Suppose $\pi(p(b_1, \ldots, b_N)) = 0$. Then $$\lim \|p(b_1(1-i_{\lambda}), \ldots, b_N(1-i_{\lambda}))\| =0.$$
\end{lemma}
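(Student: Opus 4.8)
The plan is to reduce the statement to the case of a single monomial and then control the error terms using the quasicentrality of the approximate unit. First I would write $p$ as a finite linear combination of monomials $m_j$, each of the same degree $d$ (the degree of homogeneity), so it suffices to prove the claim with $p$ replaced by an arbitrary monomial $m(x_1,\dots,x_N)$ of degree $d$; the triangle inequality then recombines the monomials at the end, but one has to be slightly careful because the hypothesis $\pi(p(b_1,\dots,b_N))=0$ is about $p$, not about individual monomials. So the real argument must stay at the level of $p$ itself: the point is that
\[
p\bigl(b_1(1-i_\lambda),\dots,b_N(1-i_\lambda)\bigr)
\]
should be compared with $(1-i_\lambda)\,p(b_1,\dots,b_N)\,(1-i_\lambda)^{?}$ or, more cleanly, with something of the form $p(b_1,\dots,b_N)(1-i_\lambda)^{d}$ up to terms that vanish in norm.

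The key computation is the following commutation estimate. For each monomial $m$ of degree $d$, write $m(b_1(1-i),\dots,b_N(1-i))$ by moving all the factors $(1-i)$ to the right past the $b_k$'s. Each such move costs a commutator $[b_k,(1-i)] = -[b_k,i]$, whose norm tends to $0$ as $\lambda\to\infty$ by quasicentrality of $\{i_\lambda\}$; since there are only finitely many moves (at most $d$ per monomial, finitely many monomials), the total error is $o(1)$. Hence
\[
\Bigl\| p\bigl(b_1(1-i_\lambda),\dots,b_N(1-i_\lambda)\bigr) - p(b_1,\dots,b_N)(1-i_\lambda)^{d}\Bigr\| \longrightarrow 0 .
\]
Here I have used $d$-homogeneity to guarantee that every monomial of $p$ accumulates exactly $d$ copies of $(1-i_\lambda)$ on the right, so the leftover factor is uniformly $(1-i_\lambda)^{d}$ across all monomials. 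Then I would invoke Lemma \ref{folklore}: since $\pi(p(b_1,\dots,b_N))=0$ and $\{i_\lambda^{1/?}\}$ — more simply, since $(1-i_\lambda)^d$ is again built from an approximate unit of $I$ (note $1-(1-i_\lambda)^d \in I$ and $\{1-(1-i_\lambda)^d\}$ is an approximate unit of $I$) — we get
\[
\limsup_\lambda \bigl\| p(b_1,\dots,b_N)(1-i_\lambda)^{d}\bigr\| = \|\pi(p(b_1,\dots,b_N))\| = 0 .
\]
Combining the last two displays gives $\lim_\lambda \|p(b_1(1-i_\lambda),\dots,b_N(1-i_\lambda))\| = 0$.

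For the more general version where $p$ has extra variables and is homogeneous only in $x_1,\dots,x_N$, the argument is identical: one only substitutes $b_k(1-i_\lambda)$ for $x_k$ with $k\le N$ and leaves the remaining variables as fixed elements $b_{N+1},\dots$; the commutator pushes still produce only $o(1)$ errors, and homogeneity in the first $N$ variables still ensures a uniform power $(1-i_\lambda)^d$ comes out on the right (the extra variables contribute no factors of $(1-i_\lambda)$). I expect the main obstacle to be bookkeeping rather than anything conceptual: one must be careful that the commutator errors are genuinely uniform over the (finitely many) monomials and over the positions within each monomial, and that pulling $(1-i_\lambda)$ to the right — as opposed to a symmetric splitting — is what makes the homogeneity degree $d$ appear cleanly. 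An alternative that avoids some of this is to first reduce to $p$ being a single monomial by the standard trick of noting that the homogenization/padding makes each monomial individually satisfy an analogous vanishing; but staying with $p$ whole and using Lemma \ref{folklore} directly on $p(b_1,\dots,b_N)(1-i_\lambda)^d$ is the cleanest route and is presumably what the proof in \cite{LoringShulman} does.
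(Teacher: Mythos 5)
Your proof is correct and follows essentially the same route as the paper: use quasicentrality to push all the $(1-i_\lambda)$ factors to the right (accumulating a uniform power $(1-i_\lambda)^d$ thanks to $d$-homogeneity, with $o(1)$ commutator errors), and then apply Lemma \ref{folklore} to $p(b_1,\dots,b_N)(1-i_\lambda)^d$ using the observation that $\{1-(1-i_\lambda)^d\}$ is again an approximate unit of $I$. The paper compresses your commutator-pushing step into a single ``$\approx$'' by quasicentrality, but the underlying argument is identical, and your remarks about bookkeeping and the extra-variables case are exactly the (routine) points one needs to check.
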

\begin{proof} Let $d$ be the degree of homogeneity of $p$. By quasicentrality of  $\{i_{\lambda}\}$ we have $$ \|p(b_1(1-i_{\lambda}), \ldots, b_N(1-i_{\lambda}))\| \approx \|p(b_1, \ldots, b_N) (1- i_{\lambda})^d\|.$$ Since $\{ 1- (1- i_{\lambda})^d\}$ is itself a (quasicentral) approximate unit, by Lemma \ref{folklore}  we obtain
$$\limsup \|p(b_1(1-i_{\lambda}), \ldots, b_N(1-i_{\lambda}))\| = \| \pi(p(b_1, \ldots, b_N)) \| =0.$$
\end{proof}

We will also need the following lemma. We let $C_b([1, \infty), B)$ denote the C*-algebra of all bounded continuous $B$-valued functions on $[1, \infty)$ and let $C_0([1, \infty), B)$ be the ideal of all functions vanishing at infinity.

\begin{lemma}\label{ExtendingFromGenerators} (\cite[Lemma 7, Remark 8]{sectionsCones}) (i) Let $\phi: C^*\langle \mathbf x\;|\;\mathbf R\rangle \to B/I$ be a $\ast$-homomorphism and let $\mathbf X \in C_b([1, \infty), B)$  be such that
\begin{equation}\label{ExtendingFromGenerators1} \lim_{t\to\infty} \mathbf R(\mathbf X)(t) = 0\end{equation}
and
\begin{equation}\label{ExtendingFromGenerators1'} \pi(\mathbf X(t)) = \phi(\mathbf x), \end{equation}
for any $t\in [1,\infty)$.
Then there exists a contractive positive asymptotic homomorphism $f_t: C^*\langle \mathbf x\;|\;\mathbf R\rangle\to B$ such that $\pi\circ f_t = \phi$, for any $t\in [1,\infty)$, and $\lim_{t\to \infty} \|f_t(\mathbf x) - \mathbf X(t)\| = 0$.

\medskip

(ii) Let $p_1, p_2, \ldots$ be noncommutative $\ast$-polynomials.
 Let $B_0\subset B$ be a C*-subalgebra and suppose
$p_k(\mathbf X)\in C_b([0, \infty), B_0)$, for each $k\in \mathbb N$. Then the asymptotic homomorphism $f_t$ in (i) can be chosen with the additional property that $f_t(C^*(p_1(\mathbf x), p_2(\mathbf x), \ldots))\subset B_0$, for each $t$.

Similar statements hold in the case when the parameter is discrete.
\end{lemma}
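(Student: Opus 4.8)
The plan is to reduce the statement to the known construction behind \cite[Lemma 7, Remark 8]{sectionsCones} by a standard perturbation/approximation argument, and then to track the extra subalgebra condition in (ii) through that construction. I will first recall the mechanism of part (i): given a presentation $C^*\langle \mathbf x\mid \mathbf R\rangle$, one enumerates the relations $R_1,R_2,\dots$ and the generators, and uses the hypotheses \eqref{ExtendingFromGenerators1} and \eqref{ExtendingFromGenerators1'} to produce, on larger and larger finite pieces, maps defined on words in $\mathbf x$ that approximately respect the relations; the continuous (or discrete) reindexing parameter $t$ is used to interpolate so that the defect in the relations tends to $0$. The positivity and contractivity are arranged because the norm relations $-\mathbf c\le \mathbf x\le \mathbf c$ are among the $\mathbf R$, so one can compose with the functional calculus cutoff $z\mapsto (c_i\wedge z)\vee(-c_i)$ applied to the self-adjoint parts; this is exactly the argument of \cite[Lemma 7.3]{LoringShulman} combined with the quasicentral approximate unit trick of Lemma \ref{AsHom1}. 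Nothing in this part is new to the present statement, so I would simply cite it.

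For part (ii), the key observation is that the asymptotic homomorphism $f_t$ produced in (i) is, up to an asymptotically vanishing perturbation, a \emph{polynomial} function of the coordinates $\mathbf X(t)$: at each stage one defines $f_t$ on a word $w(\mathbf x)$ essentially by $w(\mathbf X(t))$, possibly corrected by multiplying by powers of $1-i_\lambda$ for a quasicentral approximate unit $i_\lambda$ of a suitable ideal, and possibly cut down by functional calculus on self-adjoint generators. The point is that $C^*(p_1(\mathbf x),p_2(\mathbf x),\dots)$ is generated by the elements $p_k(\mathbf x)$, and $f_t(p_k(\mathbf x))$ differs from $p_k(\mathbf X(t))\in C_b([0,\infty),B_0)$ by something tending to $0$ in norm. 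So I would argue: let $C_0$ be the hereditary or just sub-C*-algebra generated by $\{p_k(\mathbf X(t))(s): k\in\mathbb N, s,t\}$ inside $B_0$; since $f_t$ restricted to the dense $*$-subalgebra generated by the $p_k(\mathbf x)$ lands asymptotically in $B_0$, and $B_0$ is closed, one gets $f_t(p_k(\mathbf x))\in B_0$ up to a correction $\delta_k(t)\to 0$. Then one performs a final perturbation of $f_t$ on those generators to push it genuinely into $B_0$ — this is legitimate because perturbing an asymptotic homomorphism by norm-null amounts keeps it an asymptotic homomorphism and keeps $\pi\circ f_t=\phi$ (the corrections are in $I\cap B_0$, or can be chosen so after absorbing into the approximate unit). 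Since $f_t$ is multiplicative asymptotically, $f_t$ applied to any word in the $p_k(\mathbf x)$ is asymptotically a product of elements of $B_0$, hence asymptotically in $B_0$; closing up, $f_t(C^*(p_1(\mathbf x),\dots))\subset B_0$ for each $t$ after the perturbation.

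The main obstacle I anticipate is bookkeeping rather than conceptual: one must perform the ``push into $B_0$'' perturbation \emph{simultaneously} with the interpolation that makes the relation defects vanish, and verify that doing so does not reintroduce a defect in the relations $\mathbf R$ or destroy $\pi\circ f_t=\phi$. The clean way to handle this is to redo the proof of (i) from scratch but starting from the data $\mathbf X(t)$ already (asymptotically) in $B_0$ on the relevant generators, rather than trying to fix up the output of (i) after the fact; concretely, one chooses the quasicentral approximate units used in Lemma \ref{AsHom1} inside $I$ but also arranges (by a second quasicentral unit, for the ideal $\overline{I\cap B_0}$ in $B_0$) that the corrections multiplying $p_k(\mathbf X)$ stay in $B_0$. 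Because the $p_k(\mathbf X)$ are assumed to lie in $C_b([0,\infty),B_0)$ from the start, all the functional-calculus cutoffs and approximate-unit multiplications applied to build $f_t(p_k(\mathbf x))$ stay in $B_0$, and multiplicativity propagates this to the whole generated subalgebra. The discrete case is identical with $[1,\infty)$ replaced by a directed set $\Lambda=\mathbb N$, the continuity condition (i) of the definition dropped, and the interpolation replaced by a diagonal choice along $\Lambda$; I would simply remark that ``the same proof works, omitting all continuity considerations.''
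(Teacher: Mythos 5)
The paper does not supply a proof of Lemma \ref{ExtendingFromGenerators}: it is cited verbatim from \cite[Lemma 7, Remark 8]{sectionsCones}, so there is no in-text argument to compare against. Your outline of part (i) is a reasonable high-level description of the standard mechanism (cutoffs on the self-adjoint parts to enforce the norm constraints, a positive/contractive lift of the induced homomorphism into the asymptotic algebra, quasicentral-unit corrections as in Lemma \ref{AsHom1} and \cite[Lemma 7.3]{LoringShulman}), and is consistent with the tools the paper uses elsewhere. As a citation-plus-sketch that part is acceptable.

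Your argument for part (ii), however, has a concrete gap. Pushing the individual values $f_t(p_k(\mathbf x))$ into $B_0$ by a small perturbation does \emph{not} force $f_t\bigl(C^*(p_1(\mathbf x),p_2(\mathbf x),\ldots)\bigr)\subset B_0$ for a fixed $t$. For each finite $t$ the map $f_t$ is only \emph{approximately} multiplicative, so $f_t(p_j(\mathbf x)p_k(\mathbf x))$ differs from the product $f_t(p_j(\mathbf x))f_t(p_k(\mathbf x))\in B_0$ by an error that is small but nonzero and lives in $B$, not in $B_0$; ``asymptotically in $B_0$'' does not upgrade to ``in $B_0$'' by passing to closures, since $B_0$ is already closed and the defect is nonzero at each $t$. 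Your parenthetical ``the corrections are in $I\cap B_0$, or can be chosen so after absorbing into the approximate unit'' is also unjustified: the quasicentral approximate unit is for $I\lhd B$, and $B_0$ need not be an ideal or hereditary in $B$, so conjugating or multiplying $p_k(\mathbf X(t))\in B_0$ by elements of $B$ outside $B_0$ can escape $B_0$; taking instead an approximate unit of $\overline{I\cap B_0}$ does not automatically make it quasicentral in $B$, which is what Lemma \ref{AsHom1} needs. Your fallback --- rebuild $f_t$ from scratch so that on the subalgebra $C^*(p_1(\mathbf x),\ldots)$ all intermediate operations are performed inside $B_0$ --- is indeed the right direction and is presumably what \cite[Remark 8]{sectionsCones} does, but as written it is a plan rather than a proof: the substantive point one must verify is that \emph{every} operation used to define $f_t$ on $C^*(p_1(\mathbf x),\ldots)$ (functional-calculus cutoffs, interpolation in $t$, the positive-lift selection) maps $C_b$-functions valued in $B_0$ to $C_b$-functions valued in $B_0$, and that step is not addressed.
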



The following lemma is straightforward.

\begin{lemma}\label{continuousqau} Let $u_n, n\in \mathbb N,$ be a quasicentral approximate unit for $I\triangleleft B$. Let $u_t = (n+1-t)u_n + (t-n)u_n$, for $n\le t\le n+1$, $n\in \mathbb N$. Then the continuous path $u_t$, $t\in [1, \infty)$, is also a quasicentral approximate unit for $I\triangleleft B$.
\end{lemma}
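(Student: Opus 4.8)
The statement to prove is Lemma~\ref{continuousqau}: given a quasicentral approximate unit $u_n$, $n \in \mathbb{N}$, for $I \triangleleft B$, the piecewise-linear interpolation $u_t$, $t \in [1,\infty)$, is again a quasicentral approximate unit. (I note the displayed formula has an evident typo --- the coefficient of $u_{n+1}$ should read $(t-n)u_{n+1}$, not $(t-n)u_n$ --- and I will work with the corrected interpolation.)

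\medskip

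The plan is to verify the three defining properties of a quasicentral approximate unit for the path $(u_t)$ directly from those of the sequence $(u_n)$, using only that for $t \in [n, n+1]$ the element $u_t$ is a convex combination $u_t = \alpha u_n + (1-\alpha)u_{n+1}$ with $\alpha = n+1-t \in [0,1]$. First, positivity and the norm bound $0 \le u_t \le 1$: the set $\{b \in B : 0 \le b \le 1\}$ is convex, so a convex combination of $u_n, u_{n+1}$ lies in it. Second, the approximate-unit property: for $a \in I$ and $t \in [n,n+1]$,
\[
\|a u_t - a\| = \|\alpha(a u_n - a) + (1-\alpha)(a u_{n+1} - a)\| \le \|a u_n - a\| + \|a u_{n+1} - a\|,
\]
and since $\|a u_n - a\| \to 0$ as $n \to \infty$, the right-hand side tends to $0$ as $t \to \infty$ (for $t \in [n, n+1]$ both terms are controlled by $\sup_{m \ge n}\|a u_m - a\|$). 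The same estimate applies to $\|u_t a - a\|$, or one deduces it by taking adjoints. Third, quasicentrality: for $b \in B$ and $t \in [n, n+1]$,
\[
\|[b, u_t]\| = \|\alpha[b, u_n] + (1-\alpha)[b, u_{n+1}]\| \le \|[b, u_n]\| + \|[b, u_{n+1}]\| \xrightarrow[t\to\infty]{} 0.
\]
Finally I should check continuity of $t \mapsto u_t$: on each $[n, n+1]$ it is affine in $t$, hence norm-continuous, and the two pieces agree at the integer endpoints ($u_t \to u_n$ as $t \to n$ from either side), so the global path is continuous on $[1,\infty)$.

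\medskip

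Honestly there is no real obstacle here; the lemma is, as the author says, straightforward, and the only thing to be careful about is making the ``$\to 0$ as $t \to \infty$'' statements precise via the convex-combination bounds rather than hand-waving, plus flagging the index typo in the interpolation formula. A clean way to package all three inequalities at once is to observe that the unit ball of $B$ (or any bounded convex set) is closed under convex combinations and that each of the maps $b \mapsto \|ab - a\|$, $b \mapsto \|ba - a\|$, $b \mapsto \|[b,c]\|$ is convex, so that $\limsup_{t\to\infty}$ of each quantity along the path is bounded by $\limsup_{n\to\infty}$ of the same quantity along the sequence, which is $0$.
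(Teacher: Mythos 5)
The paper offers no proof of this lemma (it is labeled ``straightforward''), so there is nothing to compare against; your verification of positivity, the approximate-unit property, quasicentrality, and continuity via convexity of the relevant seminorms is correct and complete, and you rightly note the typo in the interpolation formula (the second coefficient should multiply $u_{n+1}$).
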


\begin{theorem}\label{cylinder} Let $A$ and $B$ be separable C*-algebras and $\psi: B \to A$ a $\ast$-homomorphism. Let $f: Z_{\psi}\to D/I$ be a $\ast$-homomorphism. Suppose $f|_B$ lifts to a (discrete) asymptotic homomorphism. Then $f$ lifts to a (discrete) asymptotic homomorphism.
\end{theorem}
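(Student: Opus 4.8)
The plan is to use the explicit presentation of the mapping cylinder $Z_\psi$ from Theorem \ref{presentation} together with the extension-from-generators machinery of Lemma \ref{ExtendingFromGenerators}. Write $B = C^*\langle \mathbf x \mid -\mathbf c \le \mathbf x \le \mathbf c,\ \mathbf p(\mathbf x) = 0\rangle$ and $A$ as in Lemma \ref{PresentationA}, so that $Z_\psi$ has the homogeneous-relations presentation of Theorem \ref{presentation} with generators $\mathbf x', \mathbf z', \mathbf y', h$. The key observation is that every relation defining $Z_\psi$ is either a norm inequality, a commutation relation with the central element $h$, the relation $h\mathbf x' = \mathbf z'$, the relations $\mathbf p(\mathbf x') = 0$ coming from $B$, or the \emph{homogeneous} relations $\tilde{\mathbf p}(\mathbf z', h) = 0$, $\tilde{\mathbf q}(\mathbf z', \mathbf y', h) = 0$. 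So, to apply Lemma \ref{ExtendingFromGenerators}(i), I need to produce bounded continuous paths $\mathbf X, \mathbf Z, \mathbf Y, H \in C_b([1,\infty), D)$ that (a) lift $f(\mathbf x'), f(\mathbf z'), f(\mathbf y'), f(h)$ under the quotient map $\pi: D \to D/I$, and (b) asymptotically satisfy all the defining relations.

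First I would start from the hypothesis: $f|_B$ lifts to a (discrete) asymptotic homomorphism $g_t: B \to D$, i.e.\ $\pi \circ g_t = f|_B$ for all $t$. Set $\mathbf X(t) = g_t(\mathbf x)$; after rescaling we may assume $-\mathbf c \le \mathbf X(t) \le \mathbf c$ (replace each coordinate by the obvious functional-calculus cutoff, which does not disturb lifting or the asymptotic relations). These $\mathbf X(t)$ lift $f(\mathbf x') = f(\mathbf x)$ and asymptotically satisfy $\mathbf p(\mathbf X(t)) \to 0$. Next I need $H(t)$ lifting $f(h)$ with $0 \le H(t) \le 1$ and commuting asymptotically with everything; since $f(h)$ is a positive central element of $f(Z_\psi)$ — and in fact $h$ sits inside the copy of $CA$, where it is the canonical cone coordinate — I would lift $f(h)$ to a positive contraction in $D$ and then replace it by $H(t) = $ (functional calculus applied to) a quasicentral approximate unit trick: more precisely, I want $H(t)$ to asymptotically commute with $\mathbf X(t), \mathbf Z(t), \mathbf Y(t)$. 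This is where Lemma \ref{AsHom1} and the quasicentral approximate units of Lemma \ref{continuousqau} enter. Then define $\mathbf Z(t) = H(t)\mathbf X(t)$ (or a symmetrized version $H(t)^{1/2}\mathbf X(t)H(t)^{1/2}$ to keep selfadjointness and the norm bound $-\mathbf d H(t) \le \mathbf Z(t) \le \mathbf d H(t)$), which automatically lifts $f(\mathbf z') = f(h)f(\mathbf x') = f(h\mathbf x')$. The genuinely new ingredient is $\mathbf Y(t)$: the generators $\mathbf y'$ have no preferred description in terms of $\mathbf x$, so I must lift $f(\mathbf y')$ to some bounded path in $D$ satisfying $-\mathbf e H(t) \le \mathbf Y(t) \le \mathbf e H(t)$ — and here I would use that $\mathbf y'$ lies in the canonical copy of $CA \subset Z_\psi$, whose presentation (displayed in Theorem \ref{presentation}) is itself homogeneous, so one first lifts $f|_{CA}$ and then cuts down by $H(t)$.

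The crux is then verifying the homogeneous relations $\tilde{\mathbf p}(\mathbf Z(t), H(t)) \to 0$ and $\tilde{\mathbf q}(\mathbf Z(t), \mathbf Y(t), H(t)) \to 0$ asymptotically. This is exactly the situation Lemma \ref{AsHom1} is built for: if $\tilde p$ is homogeneous and $\pi(\tilde p(\text{lifts})) = 0$, then multiplying the lifts by $(1 - i_\lambda)$ for a quasicentral approximate unit sends $\tilde p$ to zero in norm. Since our $\mathbf Z(t), \mathbf Y(t)$ are already of the form "(lift) cut down by $H(t)$", and $H(t)$ can be arranged to be $\mathbf 1 - i_t$ for a continuous quasicentral approximate unit $i_t$ of $I$ (Lemma \ref{continuousqau}), the homogeneity degree bookkeeping in the proof of Lemma \ref{AsHom1} gives exactly the vanishing we need — this is the reason the homogeneous presentation of $Z_\psi$ is indispensable. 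Once conditions (\ref{ExtendingFromGenerators1}) and (\ref{ExtendingFromGenerators1'}) of Lemma \ref{ExtendingFromGenerators} are checked for the tuple $(\mathbf X, \mathbf Z, \mathbf Y, H)$, that lemma produces a contractive positive asymptotic homomorphism $F_t: Z_\psi \to D$ with $\pi \circ F_t = f$, which is the desired lift; the discrete case is identical with $[1,\infty)$ replaced by $\mathbb N$.

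\textbf{Main obstacle.} I expect the main difficulty to be \emph{simultaneously} arranging that $H(t)$ is a quasicentral approximate unit for $I$ relative to all the lifted generators (so that Lemma \ref{AsHom1} applies to the homogeneous relations) \emph{and} that $\mathbf Z(t) = $ (something) $\cdot H(t)$ genuinely lifts $f(\mathbf z')$ while respecting $h\mathbf x' = \mathbf z'$ on the nose — there is a tension between wanting $H(t)$ to absorb the relations homogeneously and wanting the non-homogeneous relation $h\mathbf x' = \mathbf z'$ to hold exactly (or asymptotically) after lifting. Handling $\mathbf Y(t)$, whose only handle is membership in the homogeneously-presented cone $CA \subset Z_\psi$, and making the two ideals ($I$ and the approximate unit) interact correctly with the tensor-factor $C[0,1]$ hidden in $Z_\psi$, is the delicate bookkeeping; everything else is an application of Lemmas \ref{folklore}, \ref{AsHom1}, \ref{ExtendingFromGenerators}, and \ref{continuousqau}.
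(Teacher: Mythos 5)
Your overall strategy coincides with the paper's: use the homogeneous presentation of $Z_\psi$ from Theorem~\ref{presentation}, lift the generators, push them through a quasicentral approximate unit to make the homogeneous relations vanish asymptotically (Lemma~\ref{AsHom1}), and invoke Lemma~\ref{ExtendingFromGenerators} to assemble the asymptotic lift. But several of the concrete construction steps as you describe them do not quite work, and the paper handles them differently.

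First, your claim that ``$H(t)$ can be arranged to be $\mathbf 1 - i_t$ for a continuous quasicentral approximate unit $i_t$'' is incorrect: $H(t)$ must satisfy $\pi(H(t)) = f(h)$, but $\pi(\mathbf 1 - i_t) = 1$, and in general $f(h) \neq 1$. The paper instead lifts $f(h)$ \emph{once} to a positive contraction $H$ and sets $H_\lambda = (1-u_\lambda)^{1/2} H (1-u_\lambda)^{1/2}$, which is still a lift of $f(h)$ for each $\lambda$ (since $u_\lambda \in I$), is squeezed against the approximate unit, and allows the degree bookkeeping of Lemma~\ref{AsHom1} to run. Second, defining $\mathbf Z(t) := H(t)\mathbf X(t)$ (or the symmetrization) makes verifying the order relation $-\mathbf d H(t) \le \mathbf Z(t) \le \mathbf d H(t)$ problematic, since the only bound you control on $\mathbf X(t)$ is $-\mathbf c \le \mathbf X(t) \le \mathbf c$ and $\mathbf d$ can be strictly smaller than $\mathbf c$. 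Third, your route to $\mathbf Y(t)$ -- first producing an asymptotic lift of $f|_{CA}$ and then cutting down -- is a nontrivial detour (asymptotic projectivity of cones is itself a theorem) and is not needed. The paper resolves both points (second and third) at a stroke with Davidson's two-sided order lifting theorem [Lemma 2.1 of \cite{DavidsonLiftingPositive}]: once $H$ is lifted with $0\le H \le 1$, one lifts $f(\mathbf z')$ and $f(\mathbf y')$ directly to $\mathbf Z', \mathbf Y' \in D$ satisfying $-\mathbf d H \le \mathbf Z' \le \mathbf d H$, $-\mathbf e H \le \mathbf Y' \le \mathbf e H$, and then cuts everything down by $(1-u_\lambda)^{1/2}$. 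The nonhomogeneous relation $h\mathbf x' = \mathbf z'$ then holds only asymptotically, but that is enough: $hx - z$ is homogeneous in the subset of cut-down variables $(h,z)$, so Lemma~\ref{AsHom1} still applies. Finally, you do not address the non-unital case, which in the paper requires passing to $Z_{\psi^+}$, the NC Tietze extension theorem, and part~(ii) of Lemma~\ref{ExtendingFromGenerators} to keep the lift landing in $D$ rather than $M(D)$.
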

\begin{proof} 1) First assume that $A$ is unital. We will use the presentation of $Z_{\psi}$ from Theorem \ref{presentation}. Lift $f|_B$ to an asymptotic homomorphism $\tilde f_{\lambda}: B \to D$, $\lambda\in \Lambda$. Here $\Lambda=\mathbb N$ if we are interested in discrete asymptotic homomorphisms and $\Lambda = [0, \infty)$ for the continuous version.  Let us denote $\tilde f_{\lambda}(\mathbf x)$ by  $\mathbf X_{\lambda}$.
Lift $f(h)$ to $0\le H\le 1$. By Davidson's  two-sided order lifting theorem \cite[Lemma 2.1]{DavidsonLiftingPositive} we can lift $f(\mathbf y')$ and $f(\mathbf z')$ to $\mathbf Y'\in D$ and $\mathbf Z'\in D$ respectively, with $-\mathbf dH \le \mathbf Z'\le \mathbf dH$,  $-\mathbf eH \le \mathbf Y'\le \mathbf eH$.

Let $u_{\lambda}$ be a quasicentral approximate unit in $I$ relative to $D$ (for the continuous version we take   a quasicentral approximate unit forming a continuous path as in Lemma \ref{continuousqau}).
 Let  $$H_{\lambda} = (1-u_{\lambda})^{1/2}H(1-u_{\lambda})^{1/2},$$ $$\mathbf Z'_{\lambda} = (1-u_{\lambda})^{1/2}\mathbf Z'(1-u_{\lambda})^{1/2},$$
$$\mathbf Y'_{\lambda} = (1-u_{\lambda})^{1/2}\mathbf Y'(1-u_{\lambda})^{1/2}, $$. By Lemma \ref{AsHom1}
  we have
$$  \lim_{\lambda} \|\mathbf{\tilde p}\left(\mathbf Z'_{\lambda},  H_{\lambda}\right)\| = 0, $$
$$\lim_{\lambda}\|\mathbf{\tilde q}\left(\mathbf Z'_{\lambda},
\mathbf Y'_{\lambda}, H_{\lambda}\right)\|=0, $$
 $$\lim{\lambda}\|[H_{\lambda},  \mathbf X_{\lambda}]\|=0, \; \lim_{\lambda}\|[H_{\lambda},  \mathbf Z'_{\lambda}]\|=0, $$
 $$\lim_{\lambda}\|[H_{\lambda},  \mathbf Y'_{\lambda}]\|=0, \;
\lim_{\lambda}\|H_{\lambda} \mathbf X_{\lambda} -  \mathbf Z'_{\lambda} \|=0.$$
Since $\tilde f_{\lambda}$ is an asymptotic homomorphism,
$$\lim_{\lambda} \|\mathbf p(\mathbf X_{\lambda})\|=0.$$
We also have $$-\mathbf dH_{\lambda} \le \mathbf Z'_{\lambda}\le \mathbf dH_{\lambda},\;  -\mathbf eH_{\lambda} \le \mathbf Y'_{\lambda}\le \mathbf eH_{\lambda}.$$ So all the relations of $Z_{\psi}$ are approximately satisfied. Note that $\mathbf Z'_{\lambda}$, $\mathbf Y'_{\lambda}$ and $H_{\lambda}$ are lifts of $f(\mathbf z')$, $f(\mathbf y')$ and $f(h)$ respectively, for each $\lambda\in \Lambda$.
By Lemma \ref{ExtendingFromGenerators}, $f$ lifts to a (discrete) asymptotic homomorphism.

\medskip

2) Now assume $A$ is non-unital.   We can assume $f$ is surjective.
Let $\psi^+: B \to A^+$ be the composition of $f$ with the canonical embedding $A\to A^+$. Since  $Z_{\psi}$ is an essential ideal in $Z_{\psi^+}$, we have $Z_{\psi} \subset Z_{\psi^+}\subset M(Z_{\psi})$. By the NC Tietze Extension Theorem $f$ extends to a $\ast$-homomorphism $ f': M(Z_{\psi})\to M(D/I)$. Now we apply the arguments from 1) to
$\tilde f:=  f'|_{Z_{\psi^+}}: Z_{\psi^+}\to M(D/I).$

We have

a)The generators $\mathbf x'$ belong to $B\subset Z_{\psi}$ and generate it.

b)  The elements  $\mathbf y'$,  $\mathbf z'$, $\mathbf y'h^k$,  $\mathbf z' h^k$, where $k\in \mathbb N$,     belong to $CA\subset Z_{\psi}$ and generate it.

c) $Z_{\psi}$ is generated by $B$ and $CA$.

d) By the assumption, $\mathbf X_{\lambda}$ is in $D$. By the proof of \cite[Lemma 2.1]{DavidsonLiftingPositive}, since $D$ is an ideal in $M(D)$, $\mathbf Y'_{\lambda}$, $\mathbf Z'_{\lambda}$ in our construction  also can be chosen to be in $D$. Therefore $\mathbf Y'_{\lambda}H_{\lambda}^k$,  $\mathbf Z'_{\lambda} H_{\lambda}^k$, $k\in \mathbb N$,  are also in $D$.

\noindent  By Lemma \ref{ExtendingFromGenerators} (ii), $\tilde f$  lifts to a (discrete) asymptotic homomorphism $f_{\lambda}$   such that $f_{\lambda}|_{Z_{\psi}}$ lands in $D$.
\end{proof}

\medskip

\begin{lemma}\label{factorization} Let $\phi, \psi: B \to A$ be homotopic $\ast$-homomorphisms. Then $\phi$ factorizes through $Z_{\psi}$,

$$ \begin{tikzcd}  B \arrow{r}{\alpha}\arrow[bend right=30,  swap]{rr}{\phi}& Z_{\psi}\arrow{r}{\beta}& A \end{tikzcd}$$
meaning that there exist  $\ast$-homomorphisms $\alpha: B \to Z_{\psi}$ and $\beta: Z_{\psi}\to A$ such that $\phi = \beta\circ \alpha$.
Moreover $\beta|_B = \psi$.
\end{lemma}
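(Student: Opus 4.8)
The plan is to build $\alpha$ and $\beta$ directly, by reading them off from a chosen homotopy together with the two structure maps of the mapping cylinder; this is the C*-algebraic counterpart of the elementary topological fact that a map factors through the mapping cylinder of a homotopic map, and the proof is a short explicit construction.

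First I would fix a $\ast$-homomorphism $G\colon B\to C[0,1]\otimes A$ implementing the homotopy, arranged so that $ev_0\circ G=\psi$ and $ev_1\circ G=\phi$. Since $\phi$ and $\psi$ are homotopic such a $G$ exists, after composing with the order-reversing homeomorphism $t\mapsto 1-t$ of $[0,1]$ if necessary, so that the endpoint matching $\psi$ is the endpoint $0$ used in the definition $Z_{\psi}=\{(\eta,b)\,:\,\eta\in C[0,1]\otimes A,\ b\in B,\ \eta(0)=\psi(b)\}$.

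Next I would set
$$\alpha\colon B\to Z_{\psi},\quad \alpha(b)=\big(G(b),\,b\big),\qquad\qquad \beta\colon Z_{\psi}\to A,\quad \beta(\eta,b)=\eta(1).$$
The map $\alpha$ lands in $Z_{\psi}$ precisely because $G(b)(0)=(ev_0\circ G)(b)=\psi(b)$, which is the defining condition of $Z_{\psi}$; and it is a $\ast$-homomorphism since $b\mapsto\big(G(b),b\big)$ is a $\ast$-homomorphism from $B$ into $(C[0,1]\otimes A)\oplus B$ whose image lies in $Z_{\psi}$. The map $\beta$ is the restriction to $Z_{\psi}$ of $ev_1$ applied to the first coordinate, hence a $\ast$-homomorphism (it coincides with the map $ev_t\colon Z_{\psi}\to A$ for $t=1$ appearing in the proof of Theorem~\ref{presentation}).

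Finally I would verify the two asserted identities. For the factorization, $\beta(\alpha(b))=\beta\big(G(b),b\big)=G(b)(1)=(ev_1\circ G)(b)=\phi(b)$, so $\beta\circ\alpha=\phi$. For the boundary condition, recall that $B$ sits inside $Z_{\psi}$ via $b\mapsto(1\otimes\psi(b),b)$; applying $\beta$ gives $\beta\big(1\otimes\psi(b),b\big)=(1\otimes\psi(b))(1)=\psi(b)$, so $\beta|_B=\psi$. There is essentially no obstacle here: the only point requiring a moment's care is the orientation of the interval in the choice of $G$, i.e. ensuring that the endpoint of the homotopy equal to $\psi$ is the one glued to $B$ in $Z_{\psi}$, which is handled by the flip $t\mapsto 1-t$ if needed.
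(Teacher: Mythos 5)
Your construction is identical to the paper's: the paper also defines $\alpha(b) = (\Phi(b), b)$ using a homotopy $\Phi$ with $ev_0\circ\Phi = \psi$, $ev_1\circ\Phi = \phi$, and $\beta(\xi,b) = \xi(1)$, and then verifies the two identities exactly as you do. Your remark about reparametrizing via $t\mapsto 1-t$ is a minor explicitness the paper omits by simply choosing the homotopy with the right orientation from the outset.
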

\begin{proof} Let $\Phi: B\to A\otimes C[0, 1]$ be a homotopy between $\phi$ and $\psi$ with $$ev_1\circ\Phi = \phi, \;ev_0\circ\Phi=\psi.$$ Then we can define $\alpha: B \to Z_{\psi}$ by
$$\alpha(b) = (\Phi(b), b),$$ for any $b\in B$.
Define $\beta: Z_{\psi}\to A$ by
$$\beta((\xi, b)) = \xi(1),$$ for any $(\xi, b)\in Z_{\psi}$. Then $\beta\circ \alpha = \phi$.
As usual we consider $B$ as a C*-subalgebra of $Z_{\psi}$ via the embedding $b\mapsto (1\otimes \psi(b), b)$. We have
$$\beta|_B(b) = \beta((1\otimes \psi(b), b))= \psi(b).$$
\end{proof}

\begin{theorem}\label{DiscreteHomotopyLifting} Let $B$ be a separable C*-algebra. Let $\phi, \psi: B \to D/I$ be homotopic homomorphisms and suppose $\psi$ lifts  to a (discrete) asymptotic homomorphism.
Then $\phi$ lifts  to a (discrete) asymptotic homomorphism. Moreover the whole homotopy lifts.
\end{theorem}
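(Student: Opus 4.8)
The plan is to combine the factorization through the mapping cylinder (Lemma~\ref{factorization}) with the mapping-cylinder lifting theorem (Theorem~\ref{cylinder}). First I would pass to a separable codomain so that Theorem~\ref{cylinder} applies: fixing a homotopy $\Phi\colon B\to (D/I)\otimes C[0,1]$ with $ev_1\circ\Phi=\phi$ and $ev_0\circ\Phi=\psi$, let $A_0\subseteq D/I$ be the C*-subalgebra generated by $\{(ev_t\circ\Phi)(b)\colon b\in B,\ t\in[0,1]\}$; it is separable since $B$ is and $\Phi$ is continuous. Then $\phi$ and $\psi$ corestrict to $\ast$-homomorphisms into $A_0$ and $\Phi$ corestricts to a homotopy between them through $A_0\otimes C[0,1]$; write $\psi\colon B\to A_0$ for this corestriction and form $Z_\psi$. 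Lemma~\ref{factorization} supplies $\ast$-homomorphisms $\alpha\colon B\to Z_\psi$, $\alpha(b)=(\Phi(b),b)$, and $\beta\colon Z_\psi\to A_0$, $\beta(\eta,b)=\eta(1)$, with $\beta\circ\alpha=\phi$ and $\beta|_B=\psi$; I also let $\gamma\colon Z_\psi\to A_0\otimes C[0,1]$ be the $\ast$-homomorphism $(\eta,b)\mapsto\eta$, so that $\gamma\circ\alpha=\Phi$ and $ev_1\circ\gamma=\beta$.

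For the lifting of $\phi$ alone, I would regard $f:=\iota\circ\beta\colon Z_\psi\to D/I$ (with $\iota\colon A_0\hookrightarrow D/I$) as a $\ast$-homomorphism into $D/I$; then $f|_B$ is the original $\psi\colon B\to D/I$, which by hypothesis lifts to a (discrete) asymptotic homomorphism. Theorem~\ref{cylinder} then produces a (discrete) asymptotic homomorphism $f_\lambda\colon Z_\psi\to D$ with $q\circ f_\lambda=f$ for all $\lambda$, where $q\colon D\to D/I$ is the quotient map. Since composition with a $\ast$-homomorphism on either side preserves being a (discrete) asymptotic homomorphism — the defining limits survive because a $\ast$-homomorphism is contractive, and boundedness in the discrete case and continuity in $\lambda$ in the continuous case survive because one is composing with a fixed map — the family $f_\lambda\circ\alpha\colon B\to D$ is a (discrete) asymptotic homomorphism with $q\circ(f_\lambda\circ\alpha)=f\circ\alpha=\phi$. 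Thus $\phi$ lifts.

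For the whole homotopy I would run the same argument with a parameter. Put $\widetilde f:=(\iota\otimes\mathrm{id}_{C[0,1]})\circ\gamma\colon Z_\psi\to(D/I)\otimes C[0,1]$, so that $\widetilde f\circ\alpha=\Phi$ while $\widetilde f|_B$ is $\psi$ composed with the embedding of $D/I$ into $(D/I)\otimes C[0,1]$ as constant functions. Identify $(D/I)\otimes C[0,1]$ with $(D\otimes C[0,1])/(I\otimes C[0,1])$ (legitimate since $C[0,1]$ is nuclear); then a (discrete) asymptotic homomorphism $\psi_\lambda\colon B\to D$ lifting $\psi$ yields, by composing with the embedding of $D$ into $D\otimes C[0,1]$ as constant functions, a (discrete) asymptotic homomorphism $B\to D\otimes C[0,1]$ lifting $\widetilde f|_B$. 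Applying Theorem~\ref{cylinder} with $D\otimes C[0,1]$ and $I\otimes C[0,1]$ in place of $D$ and $I$ gives a (discrete) asymptotic homomorphism $\widetilde f_\lambda\colon Z_\psi\to D\otimes C[0,1]$ with $(q\otimes\mathrm{id}_{C[0,1]})\circ\widetilde f_\lambda=\widetilde f$, and then $\Phi_\lambda:=\widetilde f_\lambda\circ\alpha$ is a (discrete) asymptotic homomorphism $B\to D\otimes C[0,1]$ lifting the homotopy $\Phi$; since $ev_0$ and $ev_1$ commute with the quotient maps, evaluating at $t=0$ and $t=1$ recovers lifts of $\psi$ and of $\phi$ respectively. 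All the analytic content is already contained in Lemma~\ref{factorization} and Theorem~\ref{cylinder}, so I do not expect a genuine obstacle; the points needing care are the reduction to a separable $A_0$ and, in the last step, feeding Theorem~\ref{cylinder} the correct ambient algebra $D\otimes C[0,1]$ and a lift of $\widetilde f|_B$ of the right shape so that the endpoint restrictions of $\Phi_\lambda$ come out as claimed.
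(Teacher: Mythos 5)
Your lifting of $\phi$ alone is essentially the paper's argument: factor $\phi=\beta\circ\alpha$ through $Z_\psi$ via Lemma~\ref{factorization}, note $\beta|_B=\psi$ lifts, apply Theorem~\ref{cylinder}, and compose the resulting lift with $\alpha$. The separable corestriction to $A_0$ is a good point — Theorem~\ref{cylinder} requires the target of $\psi$ to be separable and the paper leaves this reduction implicit.

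For lifting the whole homotopy, however, you take a genuinely different route. The paper builds a rescaled homotopy $\Theta\colon B\to Z_\psi\otimes C[0,1]$, $\Theta(b)(s)=(\Gamma_{b,s},b)$ with $\Gamma_{b,s}(t)=\Phi(b)(st)$, observes that $\Phi=(\beta\otimes\mathrm{id})\circ\Theta$, and then simply tensors the already-obtained lift $\gamma_\lambda$ of $\beta$ with $\mathrm{id}_{C[0,1]}$; no second invocation of Theorem~\ref{cylinder} is needed. You instead re-apply Theorem~\ref{cylinder} with the enlarged ambient pair $(D\otimes C[0,1],\ I\otimes C[0,1])$, using the nuclearity of $C[0,1]$ to identify $(D\otimes C[0,1])/(I\otimes C[0,1])$ with $(D/I)\otimes C[0,1]$, and feed it the map $\widetilde f=(\iota\otimes\mathrm{id})\circ\gamma$ whose restriction to $B$ is $\psi$ followed by the constant-function embedding. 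Both arguments are correct and the endpoint identifications go through in both (since evaluation commutes with the quotient maps). The paper's rescaling trick is slicker in that it recycles the same lift $\gamma_\lambda$ and avoids changing the ideal; your version is more mechanical and makes the uniformity of the construction transparent — indeed one could drop the first paragraph entirely and read the lift of $\phi$ off as $ev_1\circ\widetilde f_\lambda\circ\alpha$. One very minor remark: your final sentence worries about choosing a lift of $\widetilde f|_B$ "of the right shape" so the endpoints come out correctly, but no care is actually needed there — whatever lift of $\widetilde f$ Theorem~\ref{cylinder} returns, its evaluations at $0$ and $1$ automatically lift $\psi$ and $\phi$ because $ev_s$ intertwines the quotient maps.
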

\begin{proof} We use the notation and constructions of Lemma \ref{factorization}. By Lemma \ref{factorization} $\phi = \beta\circ \alpha$ with $\beta: Z_{\psi}\to D/I$ such that $\beta|_B = \psi$. Since $\psi$ lifts  to a (discrete) asymptotic homomorphism, by Theorem \ref{cylinder}  $\beta$ lifts to a (discrete) asymptotic homomorphism $\gamma_{\lambda}$, $\lambda\in \Lambda$. Then $\phi$ lifts to $\gamma_{\lambda}\circ\alpha$, $\lambda\in \Lambda$.

We now show that the whole homotopy $\Phi$ between $\phi$ and $\psi$ lifts.  For each $0\le s\le 1$ and $b\in B$ we define $\Gamma_{b, s}\in A\otimes C[0,1]$ by $$\Gamma_{b, s}(t) = \Phi(b)(st),$$ $t\in [0, 1]$. Since $\Gamma_{b, s}(0) = \Phi(b)(0) = \psi(b)$, we have $(\Gamma_{b, s}, b) \in Z_{\psi}$. Since the assignment $s\mapsto \Gamma_{b, s}$ is continuous, we can define a $\ast$-homomorphism $\Theta: B \to Z_{\psi}\otimes C[0, 1]$ by
$$\Theta(b)(s) = (\Gamma_{b, s}, b).$$ Then $\Phi = (\beta\otimes id_{C[0, 1]})\circ \Theta.$ Therefore  $(\gamma_{\lambda}\otimes id_{C[0, 1]})\circ \Theta,$ $\lambda\in \Lambda,$ is a homotopy that lifts $\Phi$.
\end{proof}

\bigskip

\begin{corollary}\label{HomInvGen} If $A$ and $B$ are homotopy equivalent (or just $A$ homotopically dominates $B$), and each $\ast$-homomorphism from $A$ to $D/I$ lifts to a (discrete) asymptotic homomorphism, then each $\ast$-homomorphism from $B$ to $D/I$ lifts to a (discrete) asymptotic homomorphism.
\end{corollary}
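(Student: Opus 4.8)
The plan is to reduce everything to Theorem \ref{DiscreteHomotopyLifting} via the definition of homotopy domination. Recall that $A$ homotopically dominates $B$ means there exist $\ast$-homomorphisms $r: A \to B$ and $s: B \to A$ such that $r \circ s$ is homotopic to $\mathrm{id}_B$ (the case of homotopy equivalence being a special case, so it suffices to treat domination). Let $\chi: B \to D/I$ be an arbitrary $\ast$-homomorphism; I must produce a lift of $\chi$ to a (discrete) asymptotic homomorphism.

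First I would form the composition $\chi \circ r: A \to D/I$. By hypothesis every $\ast$-homomorphism from $A$ to $D/I$ lifts to a (discrete) asymptotic homomorphism, so $\chi \circ r$ lifts; call the lift $(g_\lambda)_{\lambda \in \Lambda}: A \to D$. Then $(g_\lambda \circ s)_{\lambda \in \Lambda}: B \to D$ is a (discrete) asymptotic homomorphism (composing an asymptotic homomorphism with a $\ast$-homomorphism on the right preserves the asymptotic multiplicativity conditions and the uniform boundedness in the discrete case), and $q \circ (g_\lambda \circ s) = (\chi \circ r) \circ s = \chi \circ (r \circ s)$ for every $\lambda$. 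Thus the $\ast$-homomorphism $\psi := \chi \circ (r \circ s): B \to D/I$ lifts to a (discrete) asymptotic homomorphism.

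Now the key point: since $r \circ s$ is homotopic to $\mathrm{id}_B$, the $\ast$-homomorphism $\psi = \chi \circ (r\circ s)$ is homotopic to $\chi \circ \mathrm{id}_B = \chi$ (post-composing a homotopy $B \to B \otimes C[0,1]$ with $\chi \otimes \mathrm{id}_{C[0,1]}$ yields a homotopy between the two composites $B \to D/I$). So we have two homotopic $\ast$-homomorphisms $\chi, \psi: B \to D/I$ with $\psi$ admitting a lift to a (discrete) asymptotic homomorphism. Applying Theorem \ref{DiscreteHomotopyLifting} directly gives that $\chi$ lifts to a (discrete) asymptotic homomorphism, as desired. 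Since $\chi$ was arbitrary, the corollary follows.

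I do not anticipate a genuine obstacle here — the statement is a formal consequence of the main homotopy lifting theorem. The only points requiring a line of care are: (a) checking that pre-composing (respectively post-composing) an asymptotic homomorphism with a $\ast$-homomorphism again yields an asymptotic homomorphism, including the uniform-boundedness clause in the discrete setting, which is immediate since $\|g_\lambda(s(b))\| \le \|g_\lambda(s(b))\|$ stays bounded as $s(b)$ is a fixed element; and (b) confirming that the homotopy $r\circ s \simeq \mathrm{id}_B$ pushes forward under $\chi$ to a homotopy $\psi \simeq \chi$, which is just functoriality of $-\otimes C[0,1]$ and the evaluation maps. Everything else is a direct invocation of Theorem \ref{DiscreteHomotopyLifting}.
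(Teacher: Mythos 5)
Your proof is correct and follows essentially the same route as the paper's: factor through the homotopy domination data $r: A \to B$, $s: B \to A$ with $r\circ s \simeq \mathrm{id}_B$, lift $\chi\circ r$, precompose with $s$ to get a lift of $\chi\circ r\circ s$, and invoke Theorem~\ref{DiscreteHomotopyLifting}. The extra observations you flag (composition with a $\ast$-homomorphism preserves asymptotic homomorphisms and the discrete boundedness clause, and the homotopy pushes forward under $\chi$) are tacit in the paper's two-line argument but add nothing new.
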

\begin{proof} Since $A$ homotopically dominates $B$, there exist $\ast$-homomorphisms $\alpha: A \to B$ and $\beta: B\to A$ such that $\alpha\circ \beta$ is homotopic to $id_B$. Let $f: B \to D/I$ be a $\ast$-homomorphism. The $\ast$-homomorphism $f\circ \alpha: A \to D/I$ lifts to a (discrete) asymptotic homomorphism $\gamma_{\lambda}: A \to D$, $\lambda\in \Lambda$. Then $f\circ \alpha\circ \beta$ lifts  to the (discrete) asymptotic homomorphism $\gamma_{\lambda}\circ \beta: A \to D$, $\lambda\in \Lambda$. Since $f$ is homotopic  to $f\circ\alpha\circ \beta$, by Theorem \ref{DiscreteHomotopyLifting} $f$ lifts to a (discrete) asymptotic homomorphism.
\end{proof}

\begin{remark} All statements proved in this section hold true when we replace liftings by asymptotic liftings.
\end{remark}

\section{An application: MF-algebras}

Recall that a $C^*$-algebra $A$ is {\bf MF} (or {\bf matricial field}) if it embeds into $\prod M_{k_n}/\oplus M_{k_n}$, for some $k_n\in \mathbb N$.

Equivalently, $A$ is MF if there exist maps $\phi_n: A \to M_{k_n}$, for some $k_n\in \mathbb N$, which are approximately multiplicative, approximately linear, approximately self-adjoint, and approximately injective. Reformulating it "locally", $A$ is MF if for any $F\subset\subset A$ and $\epsilon >0$ there is $k$ and a map $\phi_k: A \to M_k$ such that
\begin{multline*}\|\phi_(a)\|>\|a\|-\epsilon, \;\|\phi(ab)- \phi(a)\phi(b)\|\le \epsilon,\\
\|\phi(a+b)- \phi(a)-\phi(b)\|\le \epsilon, \;\|\phi(a^*)- \phi(a)^*\|\le \epsilon,\end{multline*} for any $a, b \in F$.

\medskip

First we will obtain a lifting characterization of MF algebras. We will use a quotient  map constructed as follows. Let $H$ be a Hilbert space and let $P_{n}$, $n \in \mathbb N$,  be an increasing sequence of projections of dimension $n$ that $\ast$-strongly converge to $1_{B(H)}$. We will identify $M_{n}$ with $P_{n} B(H) P_{n}$.
Let $\mathcal D \subset \prod_{n\in \mathbb N} M_{n}$ be the C*-algebra of all $\ast$-strongly convergent sequences of matrices. Let $q: \mathcal D \to B(H)$ be the surjection that sends each sequence  to its $\ast$-strong limit.
Our main tool is a lifting characterization of separable RFD C*-algebras obtained by Don Hadwin.

\begin{theorem}\label{RFD} (Hadwin \cite{DonRFD}) Let $A$ be separable. TFAE:
\begin{itemize}

\item[(i)] $A$ is RFD,

\item[(ii)] every $\ast$-homomorphism from $A$ to $B(H)$ lifts to a $\ast$-homomorphism from $A$ to $\mathcal D$.
\end{itemize}
\end{theorem}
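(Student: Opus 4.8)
The plan is to prove the two implications separately, with the substance lying in (ii) $\Rightarrow$ (i). For (i) $\Rightarrow$ (ii): suppose $A$ is separable and RFD. Given a $\ast$-homomorphism $\rho\colon A\to B(H)$, first note that since $A$ is RFD we can find a sequence of finite-dimensional representations $\pi_k\colon A\to M_{m_k}$ whose direct sum is faithful, and by adding a faithful essential representation if needed we may assume $\bigoplus_k\pi_k$ is faithful and contains $\rho$ approximately in a suitable sense. The key point is that the standard perturbation lemma for representations (Voiculescu-type approximation, or more elementarily the fact that on a separable C*-algebra any representation is approximately block-diagonalizable by finite-dimensional pieces when $A$ is RFD) lets us choose, for an increasing exhausting sequence of finite sets $F_n\subset\subset A$ and $\epsilon_n\to 0$, finite-dimensional representations $\sigma_n$ and partial isometries implementing $\|\sigma_n(a)-\rho(a)\|<\epsilon_n$ on $F_n$ after compression into $P_{N_n}B(H)P_{N_n}$. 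Assembling the $\sigma_n$ into a single element of $\mathcal D$ and checking that its $\ast$-strong limit under $q$ recovers $\rho$ on a dense set (hence everywhere) gives the required lift. Actually the cleanest route is: the inclusion $A\hookrightarrow B(H)$ via $\rho$ factors, up to approximate unitary equivalence on finite sets, through finite-dimensional representations, and $\mathcal D$ is precisely designed to absorb such approximations; one defines the lift coordinate-wise and verifies the $\ast$-homomorphism relations hold exactly in $\mathcal D$ by using the genuine finite-dimensional representations (not perturbations) as the coordinates, then shows $q$ of this lift equals $\rho$.

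For (ii) $\Rightarrow$ (i): assume every $\ast$-homomorphism $A\to B(H)$ lifts through $q\colon\mathcal D\to B(H)$. Take $H$ a separable Hilbert space carrying a faithful representation $\rho\colon A\to B(H)$ (which exists since $A$ is separable). By hypothesis there is a $\ast$-homomorphism $\tilde\rho\colon A\to\mathcal D\subset\prod_n M_n$ with $q\circ\tilde\rho=\rho$. Since $\rho$ is faithful and $q\circ\tilde\rho=\rho$, the map $\tilde\rho$ is faithful. But $\tilde\rho$ takes values in $\prod_n M_n$, and composing with the coordinate projections $\mathrm{pr}_n\colon\prod_n M_n\to M_n$ yields a family of finite-dimensional representations $\mathrm{pr}_n\circ\tilde\rho\colon A\to M_n$. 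I claim these separate points: if $a\neq 0$ then $\tilde\rho(a)\neq 0$, so some coordinate $\mathrm{pr}_n(\tilde\rho(a))\neq 0$. Hence $A$ has a separating family of finite-dimensional representations, i.e. $A$ is RFD.

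The main obstacle is the forward direction (i) $\Rightarrow$ (ii): one must produce an \emph{exact} $\ast$-homomorphism lift, not merely an approximate one, and the naive "glue perturbed finite-dimensional representations together" approach only yields an approximately multiplicative map into $\prod M_n$, whose coordinates need not be honest homomorphisms. The fix is to use genuine finite-dimensional representations $\pi_k$ of $A$ as the coordinates of the lift — so multiplicativity, linearity and self-adjointness hold on the nose in each $M_n$ — and then the work is to arrange the \emph{unitaries} conjugating these $\pi_k$ so that the resulting sequence lies in $\mathcal D$ (i.e. is $\ast$-strongly convergent) and so that its $\ast$-strong limit is unitarily equivalent to, indeed equals, the given $\rho$. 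This is exactly where separability of $A$ and the definition of $\mathcal D$ as $\ast$-strongly convergent sequences are used, and it is essentially Hadwin's original argument; I would cite \cite{DonRFD} for the delicate convergence bookkeeping rather than reproduce it in full.
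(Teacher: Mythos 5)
The paper does not prove Theorem~\ref{RFD}; it is stated as a citation to Hadwin's article \cite{DonRFD}, so there is no ``paper proof'' to compare against. Evaluating your proposal on its own terms: the direction (ii)~$\Rightarrow$~(i) is correct and complete. A faithful representation $\rho\colon A\to B(H)$ exists since $A$ is separable and $H$ is a fixed separable infinite-dimensional Hilbert space; any lift $\tilde\rho\colon A\to\mathcal D$ is injective because $q\circ\tilde\rho=\rho$ is, and since $\mathcal D\subset\prod_n M_n$ the coordinate $\ast$-homomorphisms $\mathrm{pr}_n\circ\tilde\rho\colon A\to M_n$ separate points. That is exactly the argument, and it is airtight.

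For (i)~$\Rightarrow$~(ii) your sketch correctly isolates the one nontrivial issue -- one cannot simply compress a representation by $P_n$ and get a $\ast$-homomorphism; instead the $n$-th coordinate must itself be an honest finite-dimensional representation, conjugated by unitaries chosen so that the resulting sequence is $\ast$-strongly convergent to $\rho$ -- and you rightly attribute the bookkeeping to Voiculescu's theorem applied to a faithful block-diagonal representation arising from the RFD hypothesis. You openly defer the details to \cite{DonRFD}, so this is a correct outline rather than a complete proof, but the outline does match what actually happens: one shows $\rho$ is approximately unitarily equivalent to a block-diagonal representation with finite-dimensional blocks (this uses both RFD and Voiculescu's theorem), and then one arranges unitaries $U_n$ with $U_n P_n U_n^*$ reducing that block-diagonal representation so that the compressions are genuine $\ast$-homomorphisms into $M_n$ and the sequence lands in $\mathcal D$ with $q$-image $\rho$. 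One small point worth being explicit about, which your sketch glosses over: since $A$ need not have a nonzero finite-dimensional representation in every dimension, the coordinates of the lift are allowed to be non-unital or zero; this is harmless because the zero map is still a $\ast$-homomorphism, and only the $\ast$-strong limit matters. Given that the paper itself treats this as a black box, your level of detail is appropriate, and I see no actual gap beyond the deliberate deferral.
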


\begin{theorem}\label{characterizationMF} Let $A$ be separable. TFAE:

\begin{itemize}

\item[(i)] $A$ is MF,

\item[(ii)] every $\ast$-homomorphism from $A$ to $B(H)$ lifts to a discrete asymptotic homomorphism from $A$ to $\mathcal D$,

\item[(iii)] every $\ast$-homomorphism from $A$ to $B(H)$ asymptotically lifts to a discrete asymptotic homomorphism from $A$ to $\mathcal D$.

\item[(iv)] there exists an embedding of $A$ into $B(H)$ that asymptotically lifts to a discrete asymptotic homomorphism from $A$ to $\mathcal D$.
\end{itemize}
\end{theorem}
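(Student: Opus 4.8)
The plan is to prove the cycle $(i)\Rightarrow(ii)\Rightarrow(iii)\Rightarrow(iv)\Rightarrow(i)$, with the genuinely new input being $(i)\Rightarrow(ii)$, for which I would deduce the MF case from Hadwin's RFD characterization (Theorem~\ref{RFD}) via a cone trick, and then feed everything through the homotopy lifting machinery of Section~4. The implications $(ii)\Rightarrow(iii)$ and $(iii)\Rightarrow(iv)$ are trivial: a lift is in particular an asymptotic lift, and in $(iv)$ we just take $\phi$ to be any faithful embedding (which exists by separability). So the real content is $(i)\Rightarrow(ii)$ and $(iv)\Rightarrow(i)$.

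For $(iv)\Rightarrow(i)$: suppose $\iota: A\hookrightarrow B(H)$ is a faithful embedding and $(\phi_n): A\to\mathcal D$ a discrete asymptotic homomorphism with $\lim_n \|q\circ\phi_n(a)-\iota(a)\|=0$ for all $a$. Write $\phi_n(a)=(\phi_n^{(k)}(a))_{k\in\mathbb N}$ with $\phi_n^{(k)}(a)\in M_k$. Each $\phi_n^{(k)}$ is a map $A\to M_k$; for a fixed finite set $F\subset A$ and $\epsilon>0$, since $(\phi_n)$ is asymptotically multiplicative/linear/self-adjoint, choosing $n$ large makes $\|\phi_n(ab)-\phi_n(a)\phi_n(b)\|<\epsilon$ etc.\ in $\prod M_k$, hence in every coordinate $k$; and because $q\circ\phi_n(a)\to\iota(a)$ in norm, for $n$ large $\|q\circ\phi_n(a)\|>\|a\|-\epsilon$, which forces $\|\phi_n^{(k)}(a)\|>\|a\|-\epsilon$ for all sufficiently large $k$ (as $q$ is the $\ast$-strong limit and $P_k B(H)P_k$ eventually captures enough of the norm). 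Picking one such coordinate $\phi_n^{(k)}$ gives a map $A\to M_k$ that is $\epsilon$-multiplicative, $\epsilon$-linear, $\epsilon$-self-adjoint and $(\|a\|-\epsilon)$-injective on $F$; running over an increasing exhaustion of $A$ by finite sets with $\epsilon\to 0$ produces the required sequence, so $A$ is MF. The one point to be careful about is the norm bound: I would either invoke Lemma~\ref{folklore}-type reasoning or simply note that since $P_k\to 1$ $\ast$-strongly, $\|P_k \iota(a) P_k\|\to\|\iota(a)\|$, and combine with $\|\phi_n^{(k)}(a)-P_k\iota(a)P_k\|$ being small when $n,k$ are large.

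For $(i)\Rightarrow(ii)$: this is where the cone enters, mirroring the role of homogeneous relations throughout the paper. If $A$ is MF then, by a standard argument, the cone $CA$ (or rather a suitable cone-like RFD algebra) maps onto $A$ in a way that lets us transport Hadwin's theorem; concretely, MF of $A$ is equivalent to the existence of a $\ast$-homomorphism $A\to\prod M_{k_n}/\oplus M_{k_n}$ that is faithful, and one wants to lift an arbitrary $\ast$-homomorphism $\pi:A\to B(H)$ to $\mathcal D$ asymptotically-up-to-discrete-asymptotic-homomorphism. The cleanest route: show $CA$ is RFD whenever $A$ is MF, apply Theorem~\ref{RFD} to lift $\pi\circ(\text{evaluation } CA\to A$ is not a map$)$—so instead one argues that a $\ast$-homomorphism from $A$ to $B(H)$ composed into a mapping-cylinder/cone picture lifts to a genuine $\ast$-homomorphism on the cone, and then pushes forward. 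Then one uses the presentation of the cone with homogeneous relations (equation~\eqref{ConePresentationTAMS}) together with Lemma~\ref{AsHom1} and Lemma~\ref{ExtendingFromGenerators}, exactly as in the proof of Theorem~\ref{cylinder}, to collapse the cone lift down to a discrete asymptotic homomorphism $A\to\mathcal D$ lifting $\pi$. Finally, to get \emph{every} $\ast$-homomorphism (not just a faithful one) and to upgrade asymptotic lifting to honest lifting, I would invoke Theorem~\ref{DiscreteHomotopyLifting}: any two $\ast$-homomorphisms into a quotient that agree up to homotopy have the same lifting behaviour, and the relevant $\ast$-homomorphisms here are homotopic to ones already known to lift.

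The main obstacle I anticipate is precisely the bridge in $(i)\Rightarrow(ii)$: translating the local, finite-dimensional MF approximations into a \emph{single} coherent lift $A\to\mathcal D$ of a \emph{given} representation $\pi:A\to B(H)$. The subtlety is matching up the $\ast$-strong topology on $\mathcal D$ (the sequences must $\ast$-strongly converge to a \emph{prescribed} operator, namely $\pi(a)$) with the purely norm-metric nature of the MF condition; this is the reason the cone and its homogeneous presentation are needed, since they let Lemma~\ref{AsHom1} manufacture the asymptotic multiplicativity while Hadwin's theorem supplies the genuine representation-valued lift in the $h\neq 0$ directions. Getting the indexing (a double sequence in $n$ and $k$, plus the homotopy-lifting step) to fit together without circularity is the part that requires care; everything else is bookkeeping with the tools already assembled in Sections~2--4.
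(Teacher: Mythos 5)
Your implications $(ii)\Rightarrow(iii)\Rightarrow(iv)$ and your argument for $(iv)\Rightarrow(i)$ are fine and essentially coincide with the paper's proof; in particular the observation that lower semicontinuity of the norm under $\ast$-strong convergence forces $\|\phi_n^{(k)}(a)\|$ to be eventually close to $\|a\|$ is exactly what the paper uses (via its eq.~(\ref{norm})--(\ref{characterizationMF3})).

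The real gap is in $(i)\Rightarrow(ii)$. The paper's argument does \emph{not} go through the cone at all: it uses the structural fact that a separable MF C*-algebra is an inductive limit $A=\varinjlim A_n$ of RFD C*-algebras (citing \cite[Prop.~11.1.8]{BO}). Given $f\colon A\to B(H)$, each $f\circ\theta_{n,\infty}\colon A_n\to B(H)$ lifts to an honest $\ast$-homomorphism $\psi_n\colon A_n\to\mathcal D$ by Hadwin's theorem. One then chooses an arbitrary (possibly discontinuous) set-theoretic section $s\colon A\to A_1$ of $\theta_{1,\infty}$ and sets $\phi_n:=\psi_n\circ\theta_{1,n}\circ s$; the identity $\limsup_n\|\theta_{1,n}(x)\|=\|\theta_{1,\infty}(x)\|$ then makes $(\phi_n)$ an asymptotic homomorphism, and $q\circ\phi_n=f$ holds exactly, so one gets a genuine lift rather than merely an asymptotic one. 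Your proposal instead tries to route through $CA$ and Theorem~\ref{cylinder}/Theorem~\ref{DiscreteHomotopyLifting}, and you rely on the claim that $CA$ is RFD when $A$ is MF; this claim is not established here and is false in general (cones are always quasidiagonal by Voiculescu, but quasidiagonality does not give residual finite-dimensionality), and you yourself hedge with ``or rather a suitable cone-like RFD algebra.'' Moreover, you say you would ``upgrade asymptotic lifting to honest lifting'' via the homotopy lifting theorem, but no such upgrade is needed or available; the lift must be produced directly. So $(i)\Rightarrow(ii)$ as you sketch it does not go through. The missing idea is the inductive-limit-of-RFD structure of MF algebras, which is what makes Hadwin's theorem directly applicable without any detour through cones or homotopy.
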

\begin{proof}  (i)$\Rightarrow$ (ii): By \cite[Prop.11.1.8]{BO} $A$ can be written as inductive limit $A = \varinjlim A_n$, where each $A_n$ is RFD. Let $\theta_{n, m}: A_n\to A_m$ and $\theta_{n, \infty}: A_n\to A$ be the corresponding connecting $\ast$-homomorphisms.

Let $f: A \to B(H)$ be a $\ast$-homomorphism. By Theorem \ref{RFD} $f\circ \theta_{n, \infty}$ lifts to a $\ast$-homomorphism $\psi_n: A_n\to \mathcal D$. Let $s: A\to A_1$ be any (not even continuous) section of $\theta_{1, \infty}$. For each $n\in \mathbb N$, define a map $\phi_n: A \to \mathcal D$ by
$$\phi_n(a) = \psi_n(\theta_{1, n}(s(a))),$$ $a\in A$. Then for any $a, b\in A$
$$\|\phi_n(a)\phi_n(b)-\phi_n(ab)\| = \|\psi_n(\theta_{1,n}(s(a)s(b)-s(ab)))\| \le \|\theta_{1,n}(s(a)s(b)-s(ab))\|,$$
and therefore
\begin{multline*}\limsup_n \|\phi_n(a)\phi_n(b)-\phi_n(ab)\| \le \limsup_n \|\theta_{1,n}(s(a)s(b)-s(ab))\| \\
= \|\theta_{1,\infty}(s(a)s(b)-s(ab))\| =0\end{multline*}
(we used here that $\limsup_n \|\theta_{1,n}(x)\| = \|\theta_{1,\infty}(x)\|$, $x\in A_1$, see \cite[Th. 13.1.2]{LoringBook}).
One similarly checks approximate linearity  and self-adjointness of $\phi_n$, $n\in \mathbb N$. Therefore it is a discrete asymptotic homomorphism.
Since for any $n\in \mathbb N$, $a\in A$
$$q\circ\phi_n(a) = q(\psi_n(\theta_{1, n}(s(a))) = f(\theta_{n, \infty}(\theta_{1, n}(s(a)))) = f(\theta_{1, \infty}(s(a))) = f(a),$$
we conclude that $\phi_n$ is a lift of $f$, $n\in \mathbb N$.

\medskip

(ii)$\Rightarrow$(iii) and  (iii)$\Rightarrow$(iv) are obvious.

\medskip

(iv)$\Rightarrow$(i): Let $F\subset \subset A$, $\epsilon >0$. Let $\pi: A \to B(H)$ be an embedding that asymptotically lifts to an asymptotic homomorphism $\phi_n: A \to \mathcal D$, $n\in \mathbb N$. Then there is $N_1$ such that for any $n>N_1$
\begin{equation}\label{characterizationMF1}\|\phi_n(ab) - \phi_n(a)\phi_n(b)\|\le \epsilon,\end{equation} $a, b\in F$, and similarly for sums and adjoint element. Since for each $a\in A$, $q\circ\phi_n(a)\to \pi(a)$,  there is $N_2> N_1$ such that for any $n>N_2$
\begin{equation}\label{norm} \|\phi_n(a)\|>\|a\|-\epsilon,
\end{equation}
$a\in F$. Fix $m> N_2$. Write $\phi_m = \left(\phi_{m, k}\right)_{k\in \mathbb N},$ $\phi_{m, k}: A \to M_k$. It follows from (\ref{norm}) that there is $k$ such that
\begin{equation}\label{characterizationMF3}\|\phi_{m, k}(a)\|>\|a\|-2\epsilon,\end{equation} $a\in F$. It follows from (\ref{characterizationMF1}) that
\begin{equation}\label{characterizationMF4}\|\phi_{m,k}(ab) - \phi_{m, k}(a)\phi_{m, k}(b)\|\le \epsilon.\end{equation}
By (\ref{characterizationMF3}) and (\ref{characterizationMF4}), $A$ is MF.
\end{proof}

Now we are ready to prove the main result of this subsection.

\begin{theorem}\label{MFhominv} Let $A$ and $B$ be separable.  If $A$ is homotopically dominated by $B$, and $B$ is MF, then $A$ is also MF.  In particular, MF-property is homotopy invariant.
\end{theorem}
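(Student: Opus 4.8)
The plan is to combine the lifting characterization of MF algebras (Theorem \ref{characterizationMF}) with the homotopy lifting machinery for asymptotic homomorphisms (Corollary \ref{HomInvGen}). The point is that MF-ness of a separable C*-algebra is, by Theorem \ref{characterizationMF}, equivalent to the purely lifting-theoretic statement ``every $\ast$-homomorphism into $B(H)$ lifts to a discrete asymptotic homomorphism into $\mathcal D$'', and this kind of property is exactly what Corollary \ref{HomInvGen} propagates along a homotopy domination.

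\medskip

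\noindent First I would observe that $D := \mathcal D$ and $I := \ker q$ give a short exact sequence $0 \to I \to \mathcal D \xrightarrow{q} B(H) \to 0$, so that $B(H) \cong \mathcal D / I$ fits the framework of Theorem \ref{DiscreteHomotopyLifting} and Corollary \ref{HomInvGen}. Since $B$ is MF and separable, Theorem \ref{characterizationMF} (i)$\Rightarrow$(ii) tells us that every $\ast$-homomorphism from $B$ to $B(H) = \mathcal D / I$ lifts to a discrete asymptotic homomorphism into $\mathcal D$. Because $A$ is homotopically dominated by $B$, Corollary \ref{HomInvGen} (applied with its roles of $A$ and $B$ matching ours, i.e.\ $B$ dominates $A$) then yields that every $\ast$-homomorphism from $A$ to $B(H) = \mathcal D/I$ likewise lifts to a discrete asymptotic homomorphism into $\mathcal D$. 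In particular this holds for a faithful embedding $A \hookrightarrow B(H)$, which exists since $A$ is separable. Hence condition (ii) (and a fortiori (iv)) of Theorem \ref{characterizationMF} holds for $A$, so $A$ is MF.

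\medskip

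\noindent For the ``in particular'' clause: if $A$ and $B$ are homotopy equivalent, then each homotopically dominates the other, so MF-ness transfers in both directions, i.e.\ MF is a homotopy invariant among separable C*-algebras.

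\medskip

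\noindent **The only point requiring care** is bookkeeping: one must make sure the hypotheses of Corollary \ref{HomInvGen} are invoked in the correct direction (there ``$A$ homotopically dominates $B$'' means homomorphisms-out-of-$B$ lift provided homomorphisms-out-of-$A$ do; here our $B$ plays the role of the dominating algebra and our $A$ the dominated one), and that $\mathcal D$ and $q: \mathcal D \to B(H)$ genuinely realize $B(H)$ as a quotient $D/I$ so that Theorem \ref{characterizationMF} and Corollary \ref{HomInvGen} can be chained. There is no analytic obstacle — all the real work was done in Theorem \ref{characterizationMF} and in the homotopy lifting theorem; this proof is essentially a two-line formal deduction.
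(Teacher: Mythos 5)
Your proposal is correct and is essentially the paper's own argument: the proof of Theorem \ref{MFhominv} is exactly the two-step deduction from Theorem \ref{characterizationMF}(i)$\Leftrightarrow$(ii) combined with Corollary \ref{HomInvGen}, with $B(H) \cong \mathcal D/I$. Your additional bookkeeping remarks (checking the direction of domination in Corollary \ref{HomInvGen} and that $q:\mathcal D\to B(H)$ really is surjective) are correct and only make explicit what the paper leaves implicit.
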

\begin{proof} It follows from the equivalence (i)$\Leftrightarrow$(ii) in Theorem \ref{characterizationMF} and Corollary \ref{HomInvGen}.
\end{proof}

\begin{remark}  In \cite{MT2}  there is an example of a separable C*-algebra $A$ that admits a $\ast$-homomorphism $f$ to a quotient, $B/I$, such that for any $\ast$-homomorphism $g:A \to B/I $, $f\oplus g$ does not lift to an asymptotic homomorphism. However in that example $f$ admitted a lift to a discrete asymptotic homomorphism. Now we can construct many examples of $\ast$-homomorphisms that even after adding any other $\ast$-homomorphism admit no lift  even to a discrete asymptotic  homomorphism. Indeed take any non-MF C*-algebra, e.g. Toeplitz algebra $\mathcal T$. Let $f: \mathcal T \to B(H)$ be any embedding. Then for any $g: \mathcal T \to B(H)$, $f\oplus g$ is also an embedding. By Theorem \ref{characterizationMF}, $f\oplus g$ does not lift to a discrete asymptotic homomorphism.
\end{remark}

\section{A cp version of homotopy lifting}

\begin{lemma}\label{cpVersionCylindersLemma1} Let $\psi: B\to A$ be a $\ast$-homomorphism and suppose $A$ is unital. For any $\ast$-homomorphism $F: Z_{\psi}\to D$, its restrictions $f= F|_{CA}$ and $g= F|_B$ satisfy the relation
\begin{equation}\label{restrictions} f(t\otimes 1_A)g(b) = f (t\otimes \psi(b)),\end{equation} for any $b\in B$.

The other way around, for any $\ast$-homomorphisms $f: CA \to D$ and $g: B\to D$  satisfying (\ref{restrictions}), there exists unique $\ast$-homomorphism $F_{f, g}: Z_{\psi}\to D$  whose restrictions to $CA$ and $B$ equal to $f$ and $g$ respectively.
\end{lemma}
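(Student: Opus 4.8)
The plan is to analyze the two directions separately, the first being a direct computation and the second a universal-property argument using the presentation of $Z_\psi$ (or, more cheaply, its concrete realization as a subalgebra of $(C[0,1]\otimes A)\oplus B$).

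For the forward direction, recall that under the canonical embeddings we identify $CA$ inside $Z_\psi$ as $\{(\eta,0) : \eta\in C[0,1]\otimes A,\ \eta(0)=0\}$ and $B$ inside $Z_\psi$ via $b\mapsto (1\otimes\psi(b), b)$. First I would simply multiply these two elements inside $Z_\psi\subset (C[0,1]\otimes A)\oplus B$: for $\eta\in CA$ and $b\in B$,
\[
(\eta,0)\cdot(1\otimes\psi(b),b) = (\eta\,(1\otimes\psi(b)),\ 0).
\]
Taking $\eta = t\otimes 1_A$ (the canonical positive generator $h$ of $CA$, viewed in $Z_\psi$), the right-hand side is $(t\otimes\psi(b),0)$, which is exactly the image in $CA$ of $t\otimes\psi(b)$. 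Applying the $\ast$-homomorphism $F$ and using $f=F|_{CA}$, $g=F|_B$ gives $f(t\otimes 1_A)g(b)=f(t\otimes\psi(b))$, which is \eqref{restrictions}. No genuine obstacle here; it is just unwinding the definitions of the embeddings and the multiplication in the direct sum.

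For the converse, suppose $f:CA\to D$ and $g:B\to D$ are $\ast$-homomorphisms satisfying \eqref{restrictions}. I would define $F_{f,g}$ on generators using the presentation of $Z_\psi$ from Theorem \ref{presentation}: send $\mathbf x'\mapsto g(\mathbf x)$, and send $\mathbf z'\mapsto f(\mathbf z'')$, $\mathbf y'\mapsto f(\mathbf y'')$, $h\mapsto f(k)$ (matching the canonical embedding of $CA$). One then checks that all defining relations of $Z_\psi$ are satisfied: the norm and commutation relations and the homogenized polynomial relations $\tilde{\mathbf p},\tilde{\mathbf q}$ hold because $f$ is a $\ast$-homomorphism on $CA$ and these are precisely the relations of $CA$; the relation $\mathbf p(\mathbf x')=0$ holds because $g$ is a $\ast$-homomorphism on $B$; and the only relation linking the two families, $h\mathbf x' = \mathbf z'$, becomes $f(k)g(\mathbf x) = f(\mathbf z'')$, i.e. $f(t\otimes 1_A)g(x_i)=f(t\otimes\psi(x_i))=f(t\otimes z_i)$, which is exactly hypothesis \eqref{restrictions} applied to the generators $x_i$ of $B$ (recall $\psi(\mathbf x)=\mathbf z$). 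By the universal property this yields a well-defined $\ast$-homomorphism $F_{f,g}:Z_\psi\to D$, and by construction $F_{f,g}|_{CA}=f$ and $F_{f,g}|_B=g$.

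Uniqueness is immediate: $CA$ and $B$ together generate $Z_\psi$ (this is the content of the short exact sequence $0\to CA\to Z_\psi\to B\to 0$ together with its splitting), so a $\ast$-homomorphism out of $Z_\psi$ is determined by its restrictions to $CA$ and $B$. The one point requiring a little care—the step I would flag as the main obstacle, though it is minor—is verifying that the single "crossing" relation $h\mathbf x'=\mathbf z'$ is the \emph{only} compatibility needed beyond the separate relations of $CA$ and $B$; this is where the specific form of the presentation in Theorem \ref{presentation} does the work, and one should double-check that \eqref{restrictions} on generators propagates to all of $B$, which follows since both sides of \eqref{restrictions} are continuous and multiplicative/linear in $b$ in the appropriate sense (indeed, once it holds for generators and $f,g$ are $\ast$-homomorphisms, it holds on the generated $C^*$-algebra $B$). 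Alternatively, one can bypass the presentation entirely and define $F_{f,g}$ directly on the concrete model: this is a standard fact about mapping cylinders, but the presentation-based argument fits the machinery already developed in the paper.
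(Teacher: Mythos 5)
Your proof is correct and follows essentially the same route as the paper: the forward direction is a direct computation in the concrete model of $Z_\psi$, and the converse defines $F_{f,g}$ on the generators of the presentation from Theorem \ref{presentation} and checks the single linking relation $h\mathbf{x}'=\mathbf{z}'$ via (\ref{restrictions}), exactly as the paper does. The ``propagation'' worry you flag at the end is a non-issue: (\ref{restrictions}) is a hypothesis valid for all $b\in B$ and you only invoke it on generators, so nothing needs to be propagated (and the justification you sketch --- that both sides are ``multiplicative in $b$'' --- would not literally hold, since neither side is multiplicative in $b$, but this is moot).
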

\begin{proof} The first statement is obvious. For the second one, we will use the presentation of $Z_{\psi}$  from Theorem \ref{presentation} and notation used there. We define $$F(\mathbf x') = g(\mathbf x), $$
$$F(\mathbf y') = f( \mathbf y''),$$
$$F(\mathbf z') = f( \mathbf z''),$$
$$F(h) = f(k).$$
             Then
$$F(h)F(\mathbf x') = f( k)g(\mathbf x) = f(t\otimes 1_A) g(\mathbf x) = f(t\otimes \psi(\mathbf x)) = f(\mathbf z'') = F(\mathbf z').$$
All the other relations of $Z_{\psi}$ are clearly satisfied.
\end{proof}



Below let $\delta: A \to CA$ be defined by $\delta(a) = t\otimes a$, $a\in A$.

\begin{lemma}\label{cpVersionCylindersLemma3} Let $A$ be unital, $\psi: B\to A$ a $\ast$-homomorphism. Let $f: CA \to D/I$  and $g: B\to D/I$ be  $\ast$-homomorphisms satisfying
$$f(t\otimes 1_A)g(b) = f (t\otimes \psi(b)), $$
for any $b\in B$. Suppose both $f\circ\delta$ and $g$ have cp lifts. Then $F_{f, g}: Z_{\psi}\to D/I$ has a  cp lift.
\end{lemma}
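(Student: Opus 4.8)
The plan is to build a cp lift of $F_{f,g}$ out of cp lifts of $f\circ\delta$ and $g$ by mimicking, at the level of lifts, the way $F_{f,g}$ is assembled from $f$ and $g$ in Lemma \ref{cpVersionCylindersLemma1}. First I would fix cp lifts $\Phi_0\colon A\to D$ of $f\circ\delta$ and $G\colon B\to D$ of $g$. The point is that a cp lift of $f\circ\delta$ is essentially the same data as a cp lift of $f$ itself once we invoke the universal property of the cone: since $CA$ is the universal $C^*$-algebra on a positive contraction $k$ commuting with a copy of $A$ (with $A$'s relations homogenized by $k$), a completely positive map out of $CA$ is governed by its behaviour on $k$ and on $\delta(A)=k\otimes A$. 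Concretely, I would use $C[0,1]\otimes A = CA^{+}$ (the unitization in the $C[0,1]$-variable) and lift the positive contraction $f(t\otimes 1_A)$ to a positive contraction $H\in D$; then $\Phi\colon C[0,1]\otimes A\to D$ defined by extending $\Phi_0$ appropriately, or more cleanly by taking the cp map $C[0,1]\otimes A \cong C[0,1]\otimes A$ and composing a cp section on the $C[0,1]$ factor with $\Phi_0$ on the $A$ factor, gives a cp lift of $f$ on $CA$. (Here I would lean on the fact that $C[0,1]$ is projective in the appropriate sense, or simply that positive contractions lift, together with Stinespring/Arveson to build the cp map on the tensor product.)

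Next, the key relation to reproduce downstairs is $f(t\otimes 1_A)g(b)=f(t\otimes\psi(b))$, i.e. $q(H)\,q(G(b)) = q(\Phi(t\otimes\psi(b)))$ for all $b$. This will only hold in the quotient, so I expect to need a correction: replace $H$, $G$, $\Phi$ by $H_\lambda = (1-u_\lambda)^{1/2}H(1-u_\lambda)^{1/2}$, etc., for a quasicentral approximate unit $\{u_\lambda\}$ of $I$ relative to $D$, exactly as in the proof of Theorem \ref{cylinder}. Using Lemma \ref{AsHom1} on the (homogeneous!) defining relations of $Z_\psi$ from Theorem \ref{presentation} — the commutator relations $[h,\mathbf x']=0$ etc., and crucially the relation $h\mathbf x' = \mathbf z'$ — together with the fact that $\tilde f_\lambda$-type cut-downs make all these relations hold asymptotically, I would get a family of maps that asymptotically satisfies every relation of $Z_\psi$. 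Then Lemma \ref{ExtendingFromGenerators} upgrades this to an honest cp (positive, contractive) asymptotic homomorphism lifting $F_{f,g}$. But since the lemma only asserts existence of \emph{a} cp lift (not an asymptotic one), the cleanest route is actually to avoid asymptotics: arrange a single genuine cp lift by using the mapping-cylinder presentation directly.

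So the sharper plan: set $F\colon Z_\psi\to D$ on generators by $F(\mathbf x') = G(\mathbf x)$, $F(\mathbf y') = \Phi(\mathbf y'')$, $F(\mathbf z') = \Phi(\mathbf z'')$, $F(h) = \Phi(k) = H$, mirroring Lemma \ref{cpVersionCylindersLemma1}. This prescription is not literally a $\ast$-homomorphism downstairs (the relation $h\mathbf x'=\mathbf z'$ fails), but I would instead realize $F_{f,g}$ as a corner of a $\ast$-homomorphism and pull back a cp lift through a cp splitting. More precisely: $Z_\psi$ sits in the pullback $C[0,1]\otimes A \oplus_A B$, so there is a $\ast$-homomorphism $Z_\psi \to (C[0,1]\otimes A)\oplus B$; its two components are $F|_{CA}=f$ (after unitizing) and $F|_B=g$, which have cp lifts $\Phi$ and $G$ by the previous paragraphs. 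The pair $(\Phi,G)$ lands in $(C[0,1]\otimes A^+)\oplus B$, not necessarily in the pullback, so I would then compose with a cp conditional-expectation-type retraction onto a ``corrected'' copy of $Z_\psi$; the correction is again the quasicentral cut-down, which makes the compatibility $\eta(0)=\psi(b)$ hold exactly in the limit and hence — passing to a suitable point of the net, or using that $I$-valued errors are absorbed — produces a genuine cp lift.

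\textbf{Main obstacle.} The hard part is the compatibility relation: a priori $\Phi$ and $G$ are chosen independently, so $\Phi(t\otimes\psi(b))$ and $H\cdot G(b)$ agree only modulo $I$, whereas membership in $Z_\psi$ demands they agree on the nose at $t=0$. Reconciling this without destroying complete positivity is the crux; I expect the quasicentral-approximate-unit trick of Theorem \ref{cylinder} (now applied to positive contractive maps rather than $\ast$-homomorphisms, which is fine since cut-downs $x\mapsto (1-u_\lambda)^{1/2}x(1-u_\lambda)^{1/2}$ preserve complete positivity) is exactly what forces the relation to hold in the limit, and then Lemma \ref{ExtendingFromGenerators}, or a direct pullback argument, finishes the job. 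A secondary technical point is passing from ``$f\circ\delta$ has a cp lift'' to ``$f$ has a cp lift on all of $CA$'': this is where I use that $CA$ is generated by $\delta(A)$ and the central $h$, with $h$ appearing only homogeneously, so lifting the single positive contraction $h$ and the cone map $\delta$ suffices.
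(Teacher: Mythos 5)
Your proposal correctly identifies the central obstacle — that independently chosen cp lifts $\Phi$ of $f$ (or $f\circ\delta$) and $G$ of $g$ satisfy the compatibility relation only modulo $I$ — but the mechanisms you suggest for resolving it do not actually close the gap, and they miss the key structural idea in the paper's proof.

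Concretely: your first plan (quasicentral cut-downs followed by Lemma~\ref{ExtendingFromGenerators}) cannot produce a cp lift, because Lemma~\ref{ExtendingFromGenerators} only delivers a \emph{contractive positive} asymptotic homomorphism, where ``positive'' in this paper means a not-necessarily-linear map sending positives to positives — it is not a cp map, so complete positivity is lost at exactly the step you need it. Your second plan (``realize $F_{f,g}$ as a corner of a $\ast$-homomorphism and pull back through a cp splitting'', or ``compose with a cp conditional-expectation-type retraction onto a corrected copy of $Z_\psi$'') relies on a cp retraction from $(C[0,1]\otimes A^{+})\oplus B$ onto $Z_\psi$, which does not exist; and ``passing to a suitable point of the net'' does not make a norm-limit relation hold exactly at any finite stage. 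The pair $(\Phi, G)$ simply is not valued in $Z_\psi$, and no amount of cutting down fixes that without changing the domain.

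The paper's proof works around the compatibility problem by never asking the two lifts to be compatible. It introduces a finite ``sampling'' C*-algebra $W = \{(a_0,\dots,a_N,b) : \psi(b)=a_0\}\subset A^{\oplus(N+1)}\oplus B$, a $\ast$-homomorphism $\alpha:Z_\psi\to W$ given by evaluation at sample points $t_i$, and a cp map $\beta: W\to Z_\psi$ built from a partition of unity, with $\beta\circ\alpha\approx\mathrm{id}$ on a prescribed finite set. Then $F_{f,g}\circ\beta$ decomposes as a finite sum of maps of the form $a_i\mapsto y_i\, (f\circ\delta)(a_i)\, y_i$ and $b\mapsto \lim_\lambda z_\lambda\, g(b)\, z_\lambda$, each of which is cp liftable precisely because $f\circ\delta$ and $g$ are, and because conjugation by a fixed element preserves cp liftability. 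So $F_{f,g}\circ\beta\circ\alpha$ is cp liftable, and Arveson's theorem (cp liftable maps are point-norm closed) upgrades the net of approximants to a cp lift of $F_{f,g}$ itself. This finite approximation plus conjugation plus Arveson closure is the decisive idea, and it is absent from your proposal.
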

\begin{proof}  Let $F$ be a finite subset of the unit ball of $Z_{\psi}$ and  $\epsilon >0$.
There exists $\delta>0$ such that
\begin{equation}\label{LiftingDuality1}\|\eta(x) - \eta(x')\|< \epsilon, \;\text{whenever}\; |x-x'|\le \delta,\end{equation}
for any $(\eta, b)\in F$.
There exists $\delta'$ such that
$$\frac{t_1}{t_2}< 1 + \epsilon,\; \text{whenever}\; t_1, t_2 \in [\delta/2, 1] \; \text{and}\; |t_1-t_2|< \delta'$$
(for instance one can take $\delta' = \epsilon\delta/2$).
Let $I_0, \ldots, I_N$ be a cover of $[0, 1]$ by intervals, such that $I_0= [0, \delta]$, and for each $i\ge 1$, $I_i$ has length not larger than $\delta'$ and $I_i\bigcap [0, \frac{\delta}{2}] = \emptyset.$ Let $\{u_i\}_{i=0}^N$ be the corresponding partition of unity. We have $u_0(0) = 1$.
Let $t_0=0$ and choose arbitrary $t_i \in I_i$, for $i\ge 1$.
Let
$$W = \{(a_0, \ldots, a_N, b)\;|\; a_i\in A, b\in B, \psi(b) = a_0\} \subset A^{\oplus (N+1)} \oplus B.$$
 We define  a $\ast$-homomorphism $\alpha: Z_{\psi} \to W$ by
$$\alpha((\eta, b)) = (\eta(t_0), \ldots, \eta(t_N), b), $$ $(\eta, b)\in Z_{\psi}$.  We define a cp map $\beta: W \to Z_{\psi}$ by
$$\beta\left((a_0, \ldots, a_N, b)\right) = (\sum_{i=1}^N \frac{a_i}{t_i}\otimes tu_i, 0) + (a_0\otimes u_0, b).$$

{\it Claim 1:} For any $(\eta, b)\in F$, $$\|\beta\circ\alpha ((\eta, b)) - (\eta, b)\|< 3\epsilon.$$

{\it Proof of Claim 1:} For any $(\eta, b)\in Z_{\psi}$,
$$\beta\circ\alpha((\eta, b)) = (\sum_{i=1}^N \frac{\eta(t_i)}{t_i}\otimes u_it, 0) + (\eta(0)\otimes u_0, b).$$
Since $Z_{\psi}$ is spanned by $CA$ and $B$, WLOG we can assume that $F \subset CA \bigcap B$.
For any $(\eta, 0) \in F\bigcap CA$ we have

\begin{multline*} \|\beta\circ\alpha((\eta, 0)) - (\eta, 0)\| = \|\sum_{i=1}^N \frac{\eta(t_i)}{t_i}\otimes u_it - \eta\|\\
= \sup_{t\in [0, 1]}\|\sum_{i=1}^N \frac{\eta(t_i)}{t_i}u_i(t)t - \eta(t)\sum_{i=1}^N u_i(t) - \eta(t) u_0(t)\|\\
\le \sup_{t\in [0, 1]}\|\sum_{i=1}^N \left(\frac{\eta(t_i)}{t_i}t - \eta(t)\right)u_i(t) - \eta(t)u_0(t)\|\\
\le  \sup_{t\in [0, 1]}\sum_{i=1}^N |\left(\|\eta(t_i)(\frac{t}{t_i} - 1)\| + \|\eta(t_i)-\eta(t)\|\right)u_i(t)| + \sup_{t\in [0, \delta]} \|\eta(t)\|\\ \le \sup_{t\in [0, 1]}\sum_{i=1}^N 2\epsilon u_i(t) + \epsilon \le 3\epsilon.
\end{multline*}

For any $(\psi(b)\otimes 1, b) \in F\bigcap B$ we have

\begin{multline*} \|\beta\circ\alpha((\psi(b)\otimes 1, b)) - (\psi(b)\otimes 1, b)\| \\
=\|(\sum_{i=1}^N \frac{\psi(b)}{t_i} \otimes u_it, 0) + (\psi(b)\otimes u_0, b) - (\psi(b)\otimes 1, b) \|\\
= \|(\sum_{i=1}^N \frac{\psi(b)}{t_i} \otimes u_it, 0) - \sum_{i=1}^N (\psi(b)\otimes u_i, b)\|
= (\sum_{i=1}^N (\frac{\psi(b)}{t_i} \otimes u_it - \psi(b)\otimes u_i), -b)\\ =
\|b\|\| \sum_{i=1}^N (\frac{\psi(b)}{t_i} \otimes u_it - \psi(b)\otimes u_i)\|\\
\le \sup_{t\in [0, 1]}\sum_{i: t\in I_i, i\ge 1}\|(\frac{\psi(b)}{t_i}t - \psi(b))u_i(t)\| \\
\le \sup_{t\in [0, 1]}\sum_{i: t\in I_i, i\ge 1} \|\psi(b)\||\frac{t}{t_i}-1|u_i(t) \le \sup_{t\in [0, 1]}\sum_{i: t\in I_i, i\ge 1}\epsilon u_i(t) \le \epsilon.\end{multline*}
Claim 1 is proved.

\medskip

{\it Claim 2:} $F_{f, g}\circ \beta$, and therefore $F_{f, g}\circ \beta\circ \alpha$,   lifts to a cp map.

{\it Proof of Claim 2:}  \begin{multline}\label{ccpDualityCylinder1}F_{f, g}\circ \beta((a_0, \ldots, a_N, b)) = \sum_{i=1}^N F_{f, g}((\frac{a_i}{t_i}\otimes u_it, 0)) +
F_{f, g}((a_0\otimes u_0, b)).\end{multline}
 Let $y_i: = F_{f, g}(((\frac{1_A}{t_i}\otimes u_i)^{1/2}, 0))$.  We have
\begin{multline}\label{ccpDualityCylinder2}F_{f, g}((\frac{a_i}{t_i}\otimes u_it, 0))  = F_{f, g}\left(((\frac{1_A}{t_i}\otimes u_i)^{1/2} (t\otimes a_i)(\frac{1_A}{t_i}\otimes u_i)^{1/2}, 0)\right)\\ = F_{f, g}(((\frac{1_A}{t_i}\otimes u_i)^{1/2}, 0)) F_{f, g}(t\otimes a_i, 0) F_{f, g}(((\frac{1_A}{t_i}\otimes u_i)^{1/2}, 0)) = y_i f\circ \delta(a_i) y_i.\end{multline}
Since $\phi\circ\delta$ lifts to a cp map, so does the map $y_i \phi\circ\delta y_i$.
Let $\{e_{\lambda}\}$ be an approximate unit in $B$. Let $z_{\lambda}:= F_{f, g}\left((\psi(e_{\lambda})^{1/2}\otimes u_0^{1/2}, e_{\lambda})\right)$. We have
$$(a_0\otimes u_0, b) = (\psi(b)\otimes u_0, b) = \lim_{\lambda}(\psi(e_{\lambda})^{1/2}\otimes u_0^{1/2}, e_{\lambda})(\psi(b)\otimes 1, b)(\psi(e_{\lambda})^{1/2}\otimes u_0^{1/2}, e_{\lambda})$$ and therefore
\begin{multline}\label{ccpDualityCylinder3}
F_{f, g}((a_0\otimes u_0, b)) = \lim_{\lambda}  z_{\lambda} F_{f, g}\left((\psi(b)\otimes 1, b)\right) z_{\lambda}
= \lim_{\lambda}  z_{\lambda} g(b) z_{\lambda}.\end{multline}
Since $g$ lifts to a cp map, so does the map $z_{\lambda} gz_{\lambda}$, and by Arveson's theorem, so does $\lim_{\lambda}z_{\lambda} gz_{\lambda}$.
Therefore, by  (\ref{ccpDualityCylinder1}), (\ref{ccpDualityCylinder2}), (\ref{ccpDualityCylinder3}), $F_{f, g}\circ\beta$ is sum of liftable cp maps and hence is liftable.
 Claim 2 is proved.

\medskip
Let $\{a_1, a_2, \ldots\}$ be a dense subset of the unit ball of $A$.
Taking $\epsilon = 1/n$ and $F= \{a_1, \ldots, a_n\}$ , by Claim 1 we obtain that $F_{f, g}$ is pointwise limit of maps $F_{f, g}\circ \beta\circ\alpha$ which lift to cp  maps by Claim 2. By Arveson's theorem, $F_{f, g}$ lifts to a cp map.
\end{proof}

\begin{theorem}\label{ccpcylinder}  Let $F:Z_{\psi}\to D/I$ be a $\ast$-homomorphism. Suppose $F|_B$ lifts to a ccp (discrete) asymptotic homomorphism and  $F|_{CA}$ lifts to a ccp map.   Then $F$ lifts to a ccp (discrete) asymptotic homomorphism.
\end{theorem}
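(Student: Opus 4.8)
The plan is to combine the structural results already established: Theorem~\ref{cylinder} handles the case where only $F|_B$ lifts, but now we have the extra information that $F|_{CA}$ lifts to a ccp map, and we want to keep the lift completely positive. The natural route is to mimic the proof of Theorem~\ref{cylinder} but control contractivity/positivity at every step, using Lemma~\ref{cpVersionCylindersLemma3} as the ccp analogue of the ``gluing'' that underlies the presentation of $Z_\psi$. So first I would reduce to the case $A$ unital, exactly as in Theorem~\ref{cylinder} part~2): if $A$ is nonunital, pass to $A^+$ and $Z_{\psi^+}$, extend $F$ to the multiplier algebras via the NC Tietze theorem, and note (as in item d) of that proof) that the ccp lifts and the positive constructions can be arranged to land in $D$ rather than $M(D)$. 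The cone $CA^+$ contains $CA$ as a (hereditary, full) subalgebra generated by the $\mathbf y',\mathbf z'$ and their products with powers of $h$, so a ccp lift of $F|_{CA}$ together with a ccp lift of $F|_{B}$ gives what we need on $Z_{\psi^+}$.

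For the unital case, write $f = F|_{CA}$ and $g = F|_B$; by Lemma~\ref{cpVersionCylindersLemma1} these satisfy the compatibility relation \eqref{restrictions} and $F = F_{f,g}$. The hypothesis gives a ccp lift of $f$ (hence of $f\circ\delta$, since $\delta:A\to CA$ is contractive), and a ccp (discrete) asymptotic homomorphism $g_\lambda: B\to D$ lifting $g$, with $g_\lambda(\mathbf x) = \mathbf X_\lambda$ the lifts of the generators. Now I would run the construction of Theorem~\ref{cylinder}: lift $f(h)$ to $0\le H\le 1$, lift $f(\mathbf z'), f(\mathbf y')$ to $\mathbf Z',\mathbf Y'$ with the two-sided order bounds $-\mathbf dH\le \mathbf Z'\le \mathbf dH$ etc.\ via Davidson's lemma, cut down by a quasicentral approximate unit $u_\lambda$ of $I$ to get $H_\lambda,\mathbf Z'_\lambda,\mathbf Y'_\lambda$, and invoke Lemma~\ref{AsHom1} so that all the homogeneous relations $\tilde{\mathbf p},\tilde{\mathbf q}$, the commutators, and $H_\lambda\mathbf X_\lambda - \mathbf Z'_\lambda$ go to zero, while the order relations hold exactly and $\mathbf X_\lambda$ already lifts $g(\mathbf x)$. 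Then Lemma~\ref{ExtendingFromGenerators}(i) produces a positive contractive (discrete) asymptotic homomorphism $F_\lambda: Z_\psi\to D$ with $\pi\circ F_\lambda = F$ and $F_\lambda$ close to the chosen lifts on generators.

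The genuine obstacle is that Lemma~\ref{ExtendingFromGenerators}(i) only gives a \emph{positive contractive} asymptotic homomorphism, not a \emph{completely positive} one — positivity of a nonlinear map on generators does not upgrade to complete positivity. To fix this I would not apply Lemma~\ref{ExtendingFromGenerators} directly to all of $Z_\psi$, but instead argue as in Lemma~\ref{cpVersionCylindersLemma3}'s strategy: for each $\lambda$, the data $H_\lambda,\mathbf Z'_\lambda,\mathbf Y'_\lambda$ define a ccp map on $CA$ (via the universal property applied to the approximately-satisfied homogeneous cone relations, or more cleanly by composing the honest ccp lift of $f$ with the ``cutdown by $(1-u_\lambda)^{1/2}$'' completely positive map and then a finite-dimensional approximation through the $W$-and-$\beta$ device of Lemma~\ref{cpVersionCylindersLemma3}), and $g_\lambda$ is already ccp on $B$; the compatibility relation \eqref{restrictions} holds approximately, so a small perturbation makes $F_{f_\lambda, g_\lambda}$ (in the sense of Lemma~\ref{cpVersionCylindersLemma1}) well-defined up to $\epsilon$, and Lemma~\ref{cpVersionCylindersLemma3}'s proof shows the resulting map on $Z_\psi$ is ccp. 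Running this with $\epsilon=1/\lambda$ over an exhausting sequence of finite sets, and using that the compatibility defects and the relation defects all vanish as $\lambda\to\infty$, yields a ccp (discrete) asymptotic homomorphism lifting $F$. Thus the theorem reduces to assembling Lemmas~\ref{cpVersionCylindersLemma1}, \ref{cpVersionCylindersLemma3}, \ref{AsHom1}, \ref{ExtendingFromGenerators} and the unitalization trick from Theorem~\ref{cylinder}.
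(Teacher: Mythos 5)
You correctly identify the genuine obstacle: Lemma~\ref{ExtendingFromGenerators}(i) only produces a \emph{positive contractive} asymptotic homomorphism, and there is no way to upgrade that to complete positivity. But your proposed repair has a gap. You want, for each $\lambda$, to form something like $F_{f_\lambda,g_\lambda}$ ``up to $\epsilon$'' from a per-$\lambda$ ccp map on $CA$ (a cutdown of the ccp lift of $f$) and $g_\lambda$ on $B$, invoking the $W$-and-$\beta$ device of Lemma~\ref{cpVersionCylindersLemma3}. That device, however, is only available when $f$ and $g$ are honest $\ast$-homomorphisms into $D/I$ satisfying \eqref{restrictions} \emph{exactly}: the proof of Lemma~\ref{cpVersionCylindersLemma3} repeatedly evaluates $F_{f,g}$ on products such as $(\frac{1_A}{t_i}\otimes u_i)^{1/2}(t\otimes a_i)(\frac{1_A}{t_i}\otimes u_i)^{1/2}$, and $F_{f,g}$ itself is only defined (Lemma~\ref{cpVersionCylindersLemma1}) via the universal property, which requires the relations of $Z_\psi$ to hold, not hold approximately. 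At a fixed $\lambda$ your $f_\lambda$, $g_\lambda$ are merely ccp, not multiplicative, and \eqref{restrictions} only holds up to $\epsilon$; there is no ``approximate $F_{f,g}$'' and no perturbation lemma that lets you glue two per-$\lambda$ ccp maps on the non-independent pieces $CA$ and $B$ into a ccp map on $Z_\psi$. So the reduction you sketch does not close.

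The paper's actual route avoids per-$\lambda$ gluing entirely by changing the target algebra. One encodes the whole families into a single pair of \emph{exact} $\ast$-homomorphisms $\phi:C(A^+)\to C_b(\Lambda,M(D))/C_0(\Lambda,I)$ (coming from the cutdowns $\phi_\lambda=(1-i_\lambda)^{1/2}\Phi(1-i_\lambda)^{1/2}$, which approximately satisfy the homogeneous cone relations by Lemma~\ref{AsHom1}) and $g:B\to C_b(\Lambda,M(D))/C_0(\Lambda,I)$ (coming from $\Phi_\lambda$). There, $\phi$ and $g$ satisfy \eqref{restrictions} exactly; Lemma~\ref{cpVersionCylindersLemma1} produces $F_{\phi,g}$; and Lemma~\ref{cpVersionCylindersLemma3}, applied \emph{once}, gives a single ccp lift $(\tilde F_\lambda)_\lambda:Z_{\psi\circ i}\to C_b(\Lambda,M(D))$, whose coordinates $\tilde F_\lambda$ are then automatically ccp and form an asymptotic homomorphism. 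A further subtlety you do not address is that $(\phi_\lambda)_\lambda$ need \emph{not} lift $\phi$ as a map on all of $C(A^+)$; it only lifts $\phi$ on the linear span of the generators, but $(\phi_\lambda)\circ\delta$ does lift $\phi\circ\delta$, which is exactly the hypothesis Lemma~\ref{cpVersionCylindersLemma3} needs. This ``pass to the sequence algebra so the asymptotic data becomes exact'' step is the idea your sketch is missing.
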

\begin{proof} We can assume that $F$ is surjective.
If $A$ is non-unital, let $i: A \to A^+$ be the inclusion map. Since $Z_{\psi}$ is an essential ideal in $Z_{i\circ\psi}$, we have $Z_{\psi} \subset Z_{i\circ \psi} \subset M(Z_{\psi})$. By the NC Tietze
Extension Theorem $F$ extends to a $\ast$-homomorphism $F': M(Z_{\psi}) \to M(D/I)$.
Then $\tilde F = F'|_{Z_{i\circ \psi}}:Z_{i\circ\psi}\to M(D)/I$ is an
 extension of $F$.
Since the image of $B$ in both $Z_{\psi}$ and $Z_{i\circ\psi}$ is the same, $\tilde F|_B$ lifts to a cpc asymptotic homomorphism $\Phi_{\lambda}$, $\lambda\in \Lambda$.
  By  \cite[Lemma 18]{sectionsCones}, $\tilde F|_{C(A^+)} $ lifts to a ccp map $\Phi$.

If $A$ is unital, we have $\tilde F = F$. Now we proceed with both unital and non-unital cases simultaneously.

Let $\{i_{\lambda}\}$ be an approximate unit in $I$ that is quasicentral for $D$ (and is a continuous path in the continuous case, see Lemma \ref{continuousqau}). Let
$$\phi_{\lambda} = (1-i_{\lambda})^{1/2} \Phi(1-i_{\lambda})^{1/2}: C(A^+)\to M(D),$$ $\lambda\in \Lambda$. Let  $ a_1, a_2 \ldots$ be a dense subset of $A$. Let $x_i = \delta(a_i)$, $h = \delta(1_{A^+})$.
We write $C(A^+)$ as the universal C*-algebra with generators $h$ and $x_i$, $i\in \mathbb N$, and homogeneous relations.   By Lemma \ref{AsHom1}  $\phi_{\lambda}(h)$ and $\phi_{\lambda}(x_i)$'s approximately satisfy the relations and therefore define a $\ast$-homomorphism $\phi: C(A^+)\to C_b(\Lambda, M(D))/C_0(\Lambda, I)$ (note that in the discrete case $C_b(\Lambda, M(D))/C_0(\Lambda, I) = \prod M(D)/\oplus I$). We note that $(\phi_{\lambda})_{\lambda\in \Lambda}$ need not be a lift of $\phi$. It only lifts $\phi$ on the linear span of the generators.  However $(\phi_{\lambda})_{\lambda\in \Lambda}\circ \delta$ is a lift of $\phi\circ \delta$ because $(\phi_{\lambda})_{\lambda\in \Lambda}\circ \delta(a_i) = (\phi_{\lambda}(x_i))_{\lambda\in \Lambda}$ is a lift of $\phi(x_i) = \phi\circ \delta(a_i)$.

The asymptotic homomorphism $\Phi_{\lambda}, \lambda\in \Lambda$, gives rise to a $\ast$-homomorphism  $g: B \to C_b(\Lambda, M(D))/C_0(\Lambda, I)$. By Lemma \ref{AsHom1} $\phi_{\lambda}$ and $\Phi_{\lambda}$ approximately satisfy (\ref{restrictions}). Therefore $\phi$ and $g$ satisfy (\ref{restrictions}) precisely. By Lemma \ref{cpVersionCylindersLemma1} they define a $\ast$-homomorphism $F_{\phi, g}: Z_{\psi\circ i}\to C_b(\Lambda, M(D))/C_0(\Lambda, I)$. By Lemma \ref{cpVersionCylindersLemma3} $F_{\phi, g}$ lifts to a cp map
$\left( \tilde  F_{\lambda}\right)_{\lambda\in \Lambda}: Z_{\psi\circ i}\to C_b(\Lambda, M(D))$. We note that if $A$ is unital, then $\left(\tilde F_{\lambda}\right)_{\lambda\in \Lambda}$ lands in $C_b(\Lambda, D)$. Since for any $x, y\in Z_{\psi\circ i}$
$$\tilde F_{\lambda}(xy) - \tilde F_{\lambda}(x)\tilde F_{\lambda}(y)\in \oplus I,$$ $\tilde F_{\lambda}, \lambda\in \Lambda$, is an asymptotic homomorphism and for each $\lambda\in \Lambda$,  $q\circ \tilde F_{\lambda}$ is a $\ast$-homomorphism.   It remains to prove that $q\circ \tilde F_{\lambda}|_{Z_{\psi}}= F$, for each $\lambda$.
 Since $(\phi_{\lambda}(x_i))_{\lambda\in \Lambda}$ and $\left(\tilde F_{\lambda}|_{C(A^+)}(x_i)\right)_{\lambda\in \Lambda}$ are both lifts of $\phi(x_i)$, we have
 $$(\phi_{\lambda}(x_i))_{\lambda\in \Lambda} - \left(\tilde F_{\lambda}|_{C(A^+)}(x_i)\right)_{\lambda\in \Lambda}\in C_0(\Lambda, I)$$ and therefore
 $$\phi_{\lambda}(x_i) - \tilde F_{\lambda}|_{C(A^+)}(x_i) \in I, $$ for each $\lambda$. Hence $$q\circ \tilde F_{\lambda}|_{CA}(x_i) = q\circ \phi_{\lambda}(x_i) = F(x_i).$$
 Thus \begin{equation}\label{equalOnCA}q\circ\tilde F_{\lambda}|_{CA} = F|_{CA}.\end{equation}
  Similarly, since $(\tilde F_{\lambda}|_B)_{\lambda\in \Lambda} - (F_{\lambda})_{\lambda\in \Lambda}\in C_0(\Lambda, B)$, we conclude that
  \begin{equation}\label{equalOnB}q\circ \tilde F_{\lambda}|_B = F|_B.\end{equation} Since $CA$ and $B$ generate $Z_{\psi}$, by (\ref{equalOnCA}) and (\ref{equalOnB}) $q\circ \tilde F_{\lambda}|_{Z_{\psi}} = F$, for each $\lambda$.

\end{proof}

Now we obtain a cp version of homotopy lifting. We note that it is not as general as Theorem \ref{DiscreteHomotopyLifting}.

\begin{theorem}\label{cpHomotopyLiftingCor1} Let $\phi: B \to D/I$ be a $\ast$-homomorphism that has a cp lift. Let $\psi', \psi'': C\to B$ be homotopic $\ast$-homomorphisms and suppose $\phi\circ \psi''$ lifts  to a  cp (discrete) asymptotic homomorphism.
Then $\phi\circ \psi'$ lifts  to a cp (discrete) asymptotic homomorphism.
\end{theorem}
\begin{proof} Since $\psi'$ is homotopic to $\psi''$, by Lemma \ref{factorization} it factorizes through $Z_{\psi''}$

$$ \begin{tikzcd}  B \arrow{rr}{\psi'}\arrow{rd}{\alpha}& & A \arrow{r}{\phi} & D/I \\
& Z_{\psi''} \arrow{ru}{\beta}& &  \end{tikzcd}$$

\noindent such that $\beta|_{B} = \psi''$. Therefore $\left(\phi\circ \beta\right)|_{B} = \phi \circ \psi''$ lifts to a cp (discrete) asymptotic homomorphism.
Since $\phi$ has a cp lift, so does $\phi\circ \beta$. By Theorem \ref{ccpcylinder}, $\phi\circ \beta$ lifts to a cp (discrete) asymptotic homomorphism. Then $\phi\circ\phi' = \phi\circ\beta\circ\alpha$ also lifts to a cp (discrete) asymptotic homomorphism.
\end{proof}

\begin{theorem}\label{cpHomotopyLiftingCor2} Suppose $A$ is homotopically dominated by $B$ and each $\ast$-homomorphism from $B$ to $D/I$ that has a cp lift  lifts to a cp (discrete) asymptotic homomorphism. Then each $\ast$-homomorphism from $A$ to $D/I$ that has a cp lift  lifts to a cp (discrete) asymptotic homomorphism.
\end{theorem}
\begin{proof} Since $A$ is homotopically dominated by $B$, there are $\ast$-homomorphisms $\alpha: B \to A$ and $\beta: A\to B$ such that
\begin{equation}\label{cpHomotopyLiftingCor21} \alpha
\circ \beta \sim id_A.\end{equation}
Let $\phi: A \to D/I$ be a $\ast$-homomorphism that has a cp lift.
 Then $\phi\circ \alpha$ also has a cp lift. Then, by assumption, $\phi\circ \alpha$ lifts to a cp (discrete) asymptotic homomorphism. Hence so does $\phi\circ\alpha\circ\beta$. By (\ref{cpHomotopyLiftingCor21}) and Theorem \ref{cpHomotopyLiftingCor1}, $\phi = \phi \circ id_A$ lifts to a cp (discrete) asymptotic homomorphism.
\end{proof}

\section{An application: Traces and  homotopy invariance}

In \cite{Neagu} R. Neagu raised a question of whether the property that all amenable traces are quasidiagonal is homotopy invariant. He proved
that if $ A$ is  a separable exact C*-algebra with a faithful
amenable trace,  $A$ is homotopy dominated by a C*-algebra $B$ and all amenable traces on $B$ are quasidiagonal, then all
amenable traces on $A$ are quasidiagonal.  Brown-Carrion-White's result for cones, which generalizes to the class of all contractible C*-algebras (\cite[Prop.1.4]{Neagu}), is also a particular case of the above question with $B=0$.

We will prove a few more partial results on this topic. In particular it will be proved that if either $A$ or $B$ is exact, then the question above has a positive answer.

\medskip

First, we obtain a  characterization of MF- and quasidiagonal traces in terms of liftings. In \cite[Prop. 1.3]{Schafhauser}  there are lifting reformulations of the notions of amenable and quasidiagonal traces, where lifting means cp-lifting.   In particular, for an MF-trace $\tau$,  Schafhauser's reformulation  would sound as $\tau$ being  a trace of some homomorphism to $Q_{\omega}$, so there is no lifting there. We will need a different reformulation, in terms of asymptotic liftings.

\begin{proposition}\label{propositionMFtrace} Let $\tau$ be a trace on a C*-algebra $A$. TFAE:
\begin{itemize}

\item[(i)] $\tau$ is an MF (quasidiagonal, respectively) trace,

\item[(ii)]  there exists a $\ast$-homomorphism $f: A \to \prod M_{m_n}/\oplus_{2, \omega} M_{m_n}$ such that
$\tau = tr f$  and $f$ lifts to a (cp) discrete asymptotic homomorphism from $A$ to $\prod M_n$,

\item[(iii)]  there exists a discrete asymptotic homomorphism $f^{k}: A \to \prod M_{m_n}/\oplus_{2, \omega} M_{m_n}$, $k\in \mathbb N$,  such that
$\tau(a) = \lim_{k\to \infty} tr f^{k}(a)$, for any $a\in A$,   and $f^{k}$ lifts asymptotically to a (cp) discrete asymptotic homomorphism from $A$ to $\prod M_{m_n}$.
\end{itemize}
\end{proposition}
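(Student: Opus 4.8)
I would prove the cycle of implications (i)$\Rightarrow$(ii)$\Rightarrow$(iii)$\Rightarrow$(i); the ``cp''/quasidiagonal variant is obtained by inserting ``ccp'' throughout the same argument. Fix a free ultrafilter $\omega$ on $\mathbb N$, write $\mathcal M_\omega = \prod M_{m_n}/\oplus_{2,\omega}M_{m_n}$ for the relevant matrix sizes and $q: \prod M_{m_n}\to \mathcal M_\omega$ for the quotient, and recall that $\mathcal M_\omega$ carries the faithful trace $tr([(x_n)]) = \lim_\omega tr_{m_n}(x_n)$ and that $\|\cdot\|_2\le\|\cdot\|$ there. The implication (ii)$\Rightarrow$(iii) is immediate: the constant family $f^k := f$ is a discrete asymptotic homomorphism since each $f^k$ is a genuine $\ast$-homomorphism, $\lim_k tr\,f^k(a)=\tau(a)$ holds trivially, and a lift of $f$ is in particular an asymptotic lift of $(f^k)_k$.

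For (i)$\Rightarrow$(ii), fix a dense sequence $(a_j)$ in $A$. For each $n$ apply the definition of an MF (resp.\ quasidiagonal) trace to the finite set of $\ast$-monomials of degree $\le n$ in $a_1,\dots,a_n$ with $\epsilon=1/n$; a standard reduction lets us take the resulting maps $\phi_n: A\to M_{m_n}$ contractive, linear and self-adjoint (in the quasidiagonal case they are ccp to begin with). Put $g(a)=(\phi_n(a))_n\in\prod M_{m_n}$ and $f=q\circ g$. Since for fixed $i,j$ one has $\|\phi_n(a_ia_j)-\phi_n(a_i)\phi_n(a_j)\|<1/n$ for $n\ge\max(i,j)$ and $\|\cdot\|_2\le\|\cdot\|$, the $\omega$-limit of this defect vanishes, so $f$ is multiplicative on $\{a_j\}$ and, by density and continuity, on $A$; together with linearity and self-adjointness this makes $f$ a $\ast$-homomorphism, and $tr\,f(a_j)=\lim_\omega tr_{m_n}\phi_n(a_j)=\tau(a_j)$ gives $tr\,f=\tau$ after passing to the closure. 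It remains to produce an honest lift of $f$ to a discrete asymptotic homomorphism: define $g^k=(g^k_n)_n$ by $g^k_n=\phi_n$ for $n\ge k$ and $g^k_n=\phi_k$ for $n<k$. For each $a$ the difference $g^k(a)-g(a)$ is supported on the finitely many indices $n<k$, hence lies in $\oplus_{2,\omega}M_{m_n}$, so $q\circ g^k=f$ for every $k$; and since each $g^k$ is contractive, linear and self-adjoint and for $i,j\le k$ the multiplicative defect of $g^k$ on $a_i,a_j$ on every coordinate is $<1/k$, a routine density argument shows $(g^k)_k$ is a discrete asymptotic homomorphism (ccp if the $\phi_n$ were). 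Equivalently one could keep the tuple on generators constant, $\mathbf X(k)=g(\mathbf x)$, and invoke Lemma~\ref{ExtendingFromGenerators}, but the relation defects must then be made uniformly small, which again forces exactly this reindexing.

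For (iii)$\Rightarrow$(i), let $(g^k)_k$ with $g^k=(g^k_n)_n: A\to\prod M_{m_n}$ be the asymptotic lift, so $\lim_k\|q(g^k(a))-f^k(a)\|=0$ in $\mathcal M_\omega$. Given a finite $G\subset A$ and $\epsilon>0$, choose $k$ so large that $\sup_n\|g^k_n(xy)-g^k_n(x)g^k_n(y)\|<\epsilon$ (with the analogous bounds for sums and adjoints in the MF case) for $x,y\in G$, that $\|q(g^k(a))-f^k(a)\|<\epsilon/4$, and that $|tr\,f^k(a)-\tau(a)|<\epsilon/4$ for $a\in G$; the first bound uses that $(g^k)_k$ is a discrete asymptotic homomorphism into $\prod M_{m_n}$, the others the hypotheses of (iii). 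Then $|tr(q(g^k(a)))-\tau(a)|<\epsilon/2$, i.e.\ $|\lim_\omega tr_{m_n}(g^k_n(a))-\tau(a)|<\epsilon/2$, for each $a\in G$, so the set of $n$ for which $|tr_{m_n}(g^k_n(a))-\tau(a)|<\epsilon$ for all $a\in G$ belongs to $\omega$ and is in particular nonempty. Fixing such an $n$, the map $\phi:=g^k_n: A\to M_{m_n}$ is $\epsilon$-multiplicative in operator norm (a ccp map in the quasidiagonal case, an arbitrary map in the MF case) and satisfies $|tr\,\phi(a)-\tau(a)|<\epsilon$ for $a\in G$, which is exactly the definition of an MF (resp.\ quasidiagonal) trace.

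The main obstacle is the lifting step inside (i)$\Rightarrow$(ii): an honest lift of $f$ must have its multiplicativity defects small \emph{uniformly} over all product coordinates, something no single representing sequence provides; the remedy is the reindexing above, which sacrifices the first $k-1$ coordinates (invisible to $\lim_\omega$, hence harmless for the lifting condition) in favour of the well-behaved tail. The remaining points — that MF/quasidiagonal approximating maps may be taken contractive, linear and self-adjoint, and the passage from the $\omega$-trace on $\mathcal M_\omega$ back to a single matrix block by choosing an index in $\omega$ — are routine.
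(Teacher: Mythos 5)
Your proof is correct and follows essentially the same route as the paper: for (i)$\Rightarrow$(ii) you build the lift by reindexing the approximating maps $\phi_n$ so that the multiplicative defect becomes uniformly small over the product coordinates (the paper pads the head with zeros, you pad with $\phi_k$; both are equivalent modulo the ideal), and for (iii)$\Rightarrow$(i) you localize at an index in the ultrafilter in the same way. The only difference is cosmetic.
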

\begin{proof} (i)$\Rightarrow$ (ii): Since $\tau$ is an MF (quasidiagonal, respectively) trace, there exists an approximately multiplicative family of (cp) maps $\phi_n: A \to M_{m_n}$, $n\in \mathbb N$, such that
$$\tau(a) = \lim_{n\to \infty} tr_{m_n} \phi_n(a),$$ for any $a\in A$. Let
$$\psi^k = (0, \ldots, 0, \phi_k, \phi_{k+1}, \ldots): A \to \prod M_{m_n}.$$
Then
$$\|\psi^k(ab)-\psi^k(a)\psi^k(b)\|=\sup_{n>k} \|\phi_n(ab)-\phi_n(a)\phi_n(b)\|=0,$$
for any $a, b\in A$, and asymptotic linearity is similar, so we got a (cp) asymptotic homomorphism.
Let $f^k = q\circ \psi^k.$ It follows from construction of $\psi^k$ that all $f^k$ are the same, and we denote
$f= f^k$. Then $f$ is a $\ast$-homomorphism and
$$tr f(a) = \lim_{n\to\omega} tr_{m_n}\psi^k_n(a) = \lim_{k<n\to \omega} tr_{m_n} \phi_n(a) = \tau(a),$$
$a\in A$.

\medskip

(ii)$\Rightarrow$ (iii) is clear.

\medskip

(iii)$\Rightarrow$ (i):  WLOG we can assume $m_n=n$.  Write
$f^k = q\left(\left( f^k_n\right)_{n\in \mathbb N}\right)$. Let $F\subset \subset A$, $\epsilon >0$. There exists $k_0$ such that the following 3 conditions hold.

\medskip

1) $|\tau(a) - tr f^{k_0}(a)|< \epsilon, $ for any $a\in F$.

In particular this condition implies that there is $E\subset \omega$ such that  for any $n\in E$
\begin{equation}\label{propositionMFtrace1}  |\tau(a) - tr_nf^{k_0}_n(a)|<\epsilon,\end{equation}
for any $a\in F$.

\medskip

2) $\|\psi^{k_0}(ab)- \psi^{k_0}(a)\psi^{k_0}(b)\|<\epsilon,$ for any $a, b\in F$.

In particular this implies that for each $n$
\begin{equation}\label{propositionMFtrace2} \|\psi^{k_0}_n(ab) - \psi^{k_0}_n(a)\psi^{k_0}_n(b)\|<\epsilon, \end{equation}
for any $a, b\in F$.

\medskip

3) $\|q\circ \psi^{k_0}(a) - f^{k_0}(a)\|<\epsilon, $ for any $a\in F$.

Since  $\|q\circ \psi^{k_0}(a) - f^{k_0}(a)\| = \limsup_n \|\psi^{k_0}_n(a)- f^{k_0}_n(a)\|$, this condition implies that there is $n_0\in E$
such that
\begin{equation}\label{propositionMFtrace3} \|\psi_{n_0}^{k_0}(a)- f_{n_0}^{k_0}(a)\|< \epsilon, \end{equation}
for any $a\in F$.

\medskip

Then by (\ref{propositionMFtrace1}) and (\ref{propositionMFtrace3})
$$|\tau(a)- tr_{n_0} \psi^{k_0}_{n_0}(a)|\le |\tau(a) - tr_{n_0} f^{k_0}_{n_0}(a)| + |tr_{n_0} f^{k_0}_{n_0}(a) - tr \psi^{k_0}_{n_0}(a)| < 2\epsilon,$$
for any $a\in F$, and by (\ref{propositionMFtrace2}) $\psi^{k_0}_{n_0}$ is $\epsilon$-multiplicative on $F$. This means $\tau$ is an MF (quasidiagonal) trace.
\end{proof}

 \medskip Proving statements that say that if $A$  is homotopically dominated by $B$ and $B$ has some property, then  $A$ also has the same property, one can put additional assumptions either on $A$ (as in Neagu's theorem)  or on $B$ (as in Brown-Carrion-White theorem).

 \subsection{Additional assumptions on $A$}
 In this subsection we will prove that in Neagu's theorem the requirement that $A$ has  a faithful trace can be removed.

 The next lemma is well-known.

  \begin{lemma}\label{bicommutant} Let $A$ be a C*-algebra, $\mathcal M$ be a von Neumann algebra with a normal faithful tracial state $\sigma$ and $f: A \to \mathcal M$ a $\ast$-homomorphism. Then there exists a $\ast$-homomorphism $\tilde f: \pi_{\sigma\circ f}(A)'' \to \mathcal M$ such that $$f= \tilde f \circ \pi_{\sigma\circ f}.$$ $\sigma \circ \tilde f$ is a normal faithful tracial state on $\pi_{\sigma\circ f}(A)''$.
 \end{lemma}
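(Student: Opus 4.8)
The statement is a standard ``GNS factors through the bicommutant'' fact, and the plan is to build $\tilde f$ directly from the GNS data of $\sigma \circ f$. Write $\rho = \sigma \circ f$, a tracial state on $A$, and let $(\pi_\rho, H_\rho, \xi_\rho)$ be its GNS triple. First I would record the elementary observation that $\rho$ may not be faithful on $A$, but after quotienting by $\ker \pi_\rho$ we obtain a faithful state on $\pi_\rho(A)$, and $\pi_\rho(A)''$ is a finite von Neumann algebra with the normal tracial state extending $\rho$ (this is standard; the trace property of $\rho$ forces $\xi_\rho$ to be a tracial vector, hence $\pi_\rho(A)'' \cong \pi_\rho(A)'$ via $J$, and the trace is normal and faithful on $\pi_\rho(A)''$ because $\xi_\rho$ is separating for it).

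The main point is to produce the map $\tilde f$ on the bicommutant. Here I would use that $\mathcal M$, carrying the normal faithful tracial state $\sigma$, is itself in standard form on $L^2(\mathcal M, \sigma)$ with cyclic separating vector $\eta_\sigma$. Consider the composition $\sigma \circ f = \rho$; the GNS representation of $\mathcal M$ with respect to $\sigma$ restricted along $f$ gives a representation of $A$ on $L^2(\mathcal M,\sigma)$ that is unitarily equivalent to a subrepresentation of (amplifications of) $\pi_\rho$ — more precisely, since $\rho = \sigma \circ f$, there is an isometry $V: H_\rho \to L^2(\mathcal M,\sigma)$ with $V\xi_\rho = \eta_\sigma$ and $V \pi_\rho(a) = f(a) V$ for all $a \in A$. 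The subtlety I expect to be the real obstacle is showing that conjugation by $V$ (together with taking weak closures) carries all of $\pi_\rho(A)''$ into $\mathcal M$, not merely $\pi_\rho(A)$ into $\mathcal M$; the cheap inclusion $V \pi_\rho(A) V^* \subseteq \mathcal M$ does not immediately pass to bicommutants because $V$ need not be a unitary. The way around this is to use normality: define $\tilde f$ on $\pi_\rho(A)''$ by first noting that $\pi_\rho(A)'' $ is generated as a von Neumann algebra by $\pi_\rho(A)$, and that the map $\pi_\rho(a) \mapsto f(a)$ is $\sigma$-$\sigma$ continuous for the respective $2$-norms (because $\|\pi_\rho(a)\xi_\rho\|_2 = \rho(a^*a)^{1/2} = \sigma(f(a)^*f(a))^{1/2} = \|f(a)\eta_\sigma\|_2$), hence extends to a normal $\ast$-homomorphism on the $2$-norm (equivalently $\sigma$-strong) closure, which is exactly $\pi_\rho(A)''$ on one side and lands in $\mathcal M$ on the other. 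Concretely: the trace-preserving (hence $2$-norm isometric) $\ast$-homomorphism $\pi_\rho(A) \to \mathcal M$ extends, by continuity of multiplication on bounded sets in the $2$-norm and completeness, to a normal $\ast$-homomorphism $\tilde f : \pi_\rho(A)'' \to \mathcal M$, and $\tilde f \circ \pi_\rho = f$ by construction on generators.

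Finally I would verify the last sentence: $\sigma \circ \tilde f$ is normal because it is a composition of the normal map $\tilde f$ with the normal state $\sigma$; it is tracial because $\sigma$ is and $\tilde f$ is a homomorphism; and it is faithful because $\sigma \circ \tilde f \circ \pi_\rho = \sigma \circ f = \rho$ and $\tilde f$ is $2$-norm isometric (it preserves the trace), so if $\sigma(\tilde f(x)^* \tilde f(x)) = 0$ for $x \in \pi_\rho(A)''$ then $\|x\|_{2,\rho} = 0$, forcing $x = 0$ since the trace on $\pi_\rho(A)''$ is faithful. The only place requiring genuine care is the extension argument in the middle paragraph — making sure the $\ast$-homomorphism property survives passage to the $\sigma$-strong closure — and this is handled by the standard fact that a $2$-norm-isometric unital (or approximately unital) $\ast$-homomorphism between finite von Neumann algebras with specified faithful normal traces extends uniquely to a normal $\ast$-homomorphism of the weak closures.
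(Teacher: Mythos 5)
The paper states this lemma without proof, citing it as well-known, so there is no paper argument to compare against. Your reconstruction is correct: you identify that the naive conjugation-by-isometry route stalls because $V$ need not be unitary, and you replace it with the right argument, namely that $\pi_\rho(a)\mapsto f(a)$ is a well-defined, trace-preserving (hence $\|\cdot\|_{2}$-isometric) $\ast$-homomorphism on $\pi_\rho(A)$, which then extends to $\pi_\rho(A)''$ by Kaplansky density plus the fact that the $2$-norm metrizes the $\sigma$-strong topology on bounded sets and $\mathcal M$'s unit ball is $\sigma$-strongly complete; multiplicativity of the extension survives because multiplication is jointly $\sigma$-strongly continuous on bounded sets, and normality and the faithfulness of $\sigma\circ\tilde f$ follow exactly as you say from the $2$-norm isometry. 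The one small point worth spelling out (you subsume it in the isometry claim) is well-definedness: $\pi_\rho(a)=0$ gives $\rho(a^*a)=0=\sigma(f(a)^*f(a))$, so $f(a)=0$ by faithfulness of $\sigma$. Otherwise this is the standard argument and it is sound.
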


 \begin{theorem}\label{Don}(Hadwin \cite{HadwinApprEquiv}) Suppose $A = W^*(x_1, \ldots, x_s)$ is a hyperfinite von Neumann
algebra with a faithful normal tracial state $\rho$. For every $\epsilon >0$ there is a $\delta>0$
and an $N\in \mathbb N$ such that, for every unital C*-algebra $B$ with a factor tracial state $\tau$
and $a_1, \ldots, a_s, b_1, \ldots, b_s\in  B$, if, for every $\ast$-monomial $m(t_1, \ldots, t_s)$ with degree at
most $N$,
$$|\tau (m (a_1, \ldots, a_s)) - \rho (m (x_1, \ldots,  x_s))| < \delta,$$
$$|\tau (m (b_1, \ldots,  b_s)) - \rho (m (x_1, \ldots,  x_s))| < \delta,$$
then there is a unitary element $u \in B$ such that
$$\sum_{k=1}^s \|ua_ku^* - b_k\|_2 < \epsilon.$$
 \end{theorem}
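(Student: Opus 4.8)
The plan is to argue by contradiction using a tracial ultraproduct, reducing the uniform statement to the soft fact that a hyperfinite von Neumann algebra is "absorbed" up to approximate unitary conjugation inside a finite factor. Suppose the conclusion fails for some $\epsilon_0 > 0$. Taking $\delta = 1/n$ and $N = n$, we obtain for each $n \in \mathbb N$ a unital C*-algebra $B_n$ with a factor tracial state $\tau_n$ and elements $a_1^{(n)},\dots,a_s^{(n)},b_1^{(n)},\dots,b_s^{(n)} \in B_n$ whose $*$-moments match those of $x_1,\dots,x_s$ to within $1/n$ on all $*$-monomials of degree $\le n$, yet with $\sum_{k=1}^s \|u a_k^{(n)} u^* - b_k^{(n)}\|_2 \ge \epsilon_0$ for every unitary $u \in B_n$. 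First I would reduce to the uniformly bounded case: using the moment estimates, the $L^{2p}$-norms of $a_i^{(n)}$ and $b_i^{(n)}$ converge to those of the bounded elements $x_i$, so replacing these elements by their spectral truncations at level $2\max_i\|x_i\|+1$ changes them by $o(1)$ in $\|\cdot\|_2$; after a reindexing, the truncated tuples still match the moments asymptotically, still admit no $\epsilon_0$-conjugating unitary in $B_n$, and are norm-bounded.

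Fix a free ultrafilter $\omega$ on $\mathbb N$ and form the tracial ultraproduct $(\mathcal B,\tau) = \prod_\omega (\pi_{\tau_n}(B_n)'',\tau_n)$, a finite von Neumann algebra with faithful normal trace; since each $\tau_n$ is a factor trace, $\mathcal B$ is a finite factor. Put $a_i = [a_i^{(n)}]_\omega$ and $b_i = [b_i^{(n)}]_\omega$. Because $N_n \to \infty$ and $\delta_n \to 0$, for every $*$-monomial $m$ we get $\tau(m(a)) = \lim_\omega \tau_n(m(a^{(n)})) = \rho(m(x))$ and likewise $\tau(m(b)) = \rho(m(x))$. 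Hence $x_i \mapsto a_i$ and $x_i \mapsto b_i$ are trace-preserving, and — using that $\rho$ is faithful on $A$ and $\tau$ is faithful on $\mathcal B$ — they extend to normal $*$-isomorphisms of $A = W^*(x_1,\dots,x_s)$ onto $W^*(a_1,\dots,a_s)$ and onto $W^*(b_1,\dots,b_s)$ inside $\mathcal B$. Composing, we obtain a trace-preserving $*$-isomorphism $\theta : W^*(a_1,\dots,a_s) \to W^*(b_1,\dots,b_s) \subseteq \mathcal B$ with $\theta(a_i) = b_i$; in particular $\theta$ is $\|\cdot\|_2$-isometric.

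The key step is that $\theta$ is approximately implemented by a unitary of $\mathcal B$. Since $A$ is hyperfinite it is AFD, so $W^*(a_1,\dots,a_s) = \overline{\bigcup_j C_j}^{\,w}$ with $C_j$ finite-dimensional and increasing. Choose $j$ large enough that each $a_k$ lies within $\epsilon_0/(4s)$ of some $a_k' \in C_j$ in $\|\cdot\|_2$. The two embeddings $C_j \hookrightarrow \mathcal B$ (the inclusion, and $\theta|_{C_j}$) induce the same trace on the finite-dimensional algebra $C_j$, hence have the same multiplicities, hence are conjugate by a unitary $u \in \mathcal B$ with $u c u^* = \theta(c)$ for all $c \in C_j$. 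Then, since $\theta$ is $\|\cdot\|_2$-isometric,
\[
\sum_{k=1}^s \|u a_k u^* - b_k\|_2 = \sum_{k=1}^s \|u a_k u^* - \theta(a_k)\|_2 \le \sum_{k=1}^s 2\|a_k - a_k'\|_2 < \tfrac{\epsilon_0}{2}.
\]
Finally I would push $u$ back into the $B_n$: write $u = e^{ih}$ with $h = h^* \in \mathcal B$, $\|h\| \le \pi$, lift $h$ to self-adjoint $\tilde h^{(n)} \in B_n$ with $\|\tilde h^{(n)}\| \le \pi$ and $[\tilde h^{(n)}]_\omega = h$ (using that self-adjoint norm-bounded elements lift along $\pi_{\tau_n}(B_n)'' \to$ its $\|\cdot\|_2$-dense C*-subalgebra $\pi_{\tau_n}(B_n)$, by Kaplansky density, and then along $B_n \twoheadrightarrow \pi_{\tau_n}(B_n)$), and set $u^{(n)} = e^{i\tilde h^{(n)}} \in U(B_n)$. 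Then $[u^{(n)}]_\omega = u$, so $\lim_\omega \sum_k \|u^{(n)} a_k^{(n)} u^{(n)*} - b_k^{(n)}\|_2 < \epsilon_0/2$, whence $\sum_k \|u^{(n)} a_k^{(n)} u^{(n)*} - b_k^{(n)}\|_2 < \epsilon_0$ for $\omega$-many $n$, contradicting the choice of $B_n$.

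\textbf{Main obstacle.} The heart of the argument is the approximate-innerness step, which is precisely where hyperfiniteness of $A$ is indispensable: one must know that any two trace-preserving embeddings of a hyperfinite von Neumann algebra into a fixed finite factor are approximately unitarily equivalent, which I reduce here to the classification of embeddings of a finite-dimensional algebra by multiplicity (equivalently, by the induced trace) together with the AFD exhaustion. The norm-truncation reduction and the step of lifting the conjugating unitary back from $\mathcal B$ into the $B_n$ are routine but rely on the stated moment control and on standard density facts for unitaries and self-adjoints in finite von Neumann algebras.
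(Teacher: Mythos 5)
The paper does not actually prove this theorem: it is stated as Hadwin's result and cited from \cite{HadwinApprEquiv} without proof, so there is no internal proof to compare yours against. Taking your proposal on its own terms, the overall strategy — contradiction, tracial ultraproduct, identifying two trace-preserving copies of $A$ inside the ultraproduct, and then using hyperfiniteness to approximately conjugate them — is sound and is the standard way to prove such uniform approximate-equivalence statements; it is also close in spirit to Hadwin's actual argument.

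Two points in the sketch deserve more care than you give them. First, the truncation step: the $a_i^{(n)}$ need not be self-adjoint or even normal, so one cannot "spectrally truncate" them directly; one has to use something like $a_i^{(n)} f(a_i^{(n)*}a_i^{(n)})$ for a cutoff $f$, and then verify that this modified tuple still has the right moments along $\omega$. This last verification is not as automatic as "$o(1)$ in $\|\cdot\|_2$" suggests, because a fixed $*$-monomial in the truncated and untruncated variables must be compared using generalized H\"older estimates in which some of the factors are not yet norm-bounded; the moment hypothesis only controls each $L^{2p}$-norm for fixed $p$, so a diagonal choice of $p = p(n)$ is needed. Second, you implicitly invoke that the tracial ultraproduct of finite factors is again a factor. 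This is true (it follows from the uniform Dixmier-averaging property of finite factors), but it is a genuine theorem and should be cited rather than asserted; your finite-dimensional conjugation lemma (same trace on $C_j$ implies unitary conjugacy of embeddings) fails in non-factors, so factoriality of the ultraproduct is load-bearing. With those two points filled in, the argument is correct.
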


 Recall that $\prod M_n/\oplus_{2, \omega} M_n$ is a $II_1$-factor with a normal faithful tracial state $tr$ defined by
 $tr\; q((T_n))_{n\in \mathbb N}) = \lim_{n\to \omega } tr_n T_n.$

 \begin{corollary}\label{unitaryEquivalence} Let $A$ be a separable C*-algebra, $f, g: \mathcal A \to \prod M_n/\oplus_{2, \omega} M_n$   $\ast$-homomorphisms such that
 $tr \; f = tr \; g$ and $\pi_{tr f}(A)''$ is hyperfinite. Then  there is a unitary $u\in\prod M_n/\oplus_{2, \omega} M_n$ such that $f= u^*gu$.
 \end{corollary}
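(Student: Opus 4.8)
The plan is to reduce the statement to Hadwin's approximate-equivalence theorem (Theorem \ref{Don}) applied to the hyperfinite $II_1$-factor $\mathcal{M} := \pi_{tr\, f}(A)''$, via the intertwining/2-norm-approximation scheme that is standard for von Neumann algebraic uniqueness results. First I would use Lemma \ref{bicommutant} twice: since $tr\, f = tr\, g$, the GNS representations $\pi_{tr\, f}$ and $\pi_{tr\, g}$ have the same image closure (up to the canonical isomorphism induced by the trace), call it $\mathcal{M}$, which is hyperfinite by hypothesis and carries a normal faithful tracial state $\sigma$ with $\sigma \circ \pi_{tr\, f} = tr\, f$. Thus both $f$ and $g$ factor as $f = \tilde f \circ \pi$, $g = \tilde g \circ \pi$ through $\mathcal{M}$, where $\tilde f, \tilde g : \mathcal{M} \to \prod M_n/\oplus_{2,\omega} M_n$ are trace-preserving (for the normalized traces) $\ast$-homomorphisms. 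Since $\mathcal{M}$ is hyperfinite with separable predual, I would fix an increasing sequence of finite-dimensional subfactors with dense union, and correspondingly a sequence of self-adjoint generators $x_1, x_2, \ldots$ whose moments are approximated by finitely many of them.

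Next, the core step: a back-and-forth/Elliott-intertwining argument in the $2$-norm. Enumerate a countable $\|\cdot\|_2$-dense set of elements of $\mathcal{M}$ (say the rational $\ast$-algebra generated by the chosen generators). I would inductively build a sequence of unitaries $u_k \in \prod M_n/\oplus_{2,\omega} M_n$ such that $\sum \|u_k \tilde f(x_i) u_k^* - \tilde g(x_i)\|_2$ shrinks on larger and larger finite sets with errors summable, and simultaneously control $\|u_k - u_{k-1}\|_2$ to be summable, so that $u := \lim_k u_k$ exists in $2$-norm; one then checks it is still a unitary (the unitary group is $\|\cdot\|_2$-closed in a finite von Neumann algebra, and $\prod M_n/\oplus_{2,\omega} M_n$ is such) and satisfies $u \tilde f(x) u^* = \tilde g(x)$ for all generators $x$, hence $u f u^* = g$ on a generating set of $A$ and therefore $f = u^* g u$. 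Each inductive step is where Theorem \ref{Don} enters: because $tr\, f = tr\, g$, for every $\ast$-monomial $m$ of bounded degree the values $tr\, f(m(x_1,\ldots,x_s))$ and $tr\, g(m(x_1,\ldots,x_s))$ are \emph{equal}, hence both within $\delta$ of $\rho(m(x_1,\ldots,x_s))$ for $\rho$ the trace on the relevant finite-dimensional subfactor; Hadwin's theorem then produces a unitary conjugating the two $s$-tuples to within $\epsilon$ in $2$-norm. One must be slightly careful that $\prod M_n/\oplus_{2,\omega} M_n$ is a factor with a \emph{factor} tracial state so that Theorem \ref{Don} applies directly, and that we can pass between the finite von Neumann subalgebra generated by a finite tuple and the ambient factor — both are standard.

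The main obstacle I anticipate is the bookkeeping of the intertwining: Theorem \ref{Don} is a \emph{one-step} approximation statement (given two tuples with matching moments, a single unitary conjugates them close in $2$-norm), whereas to get an \emph{exact} conjugacy $f = u^* g u$ I need to iterate and pass to a limit, which requires at each stage matching not just the original generators but also the approximations produced at the previous stage — i.e.\ I must arrange that after conjugating by $u_k$ the tuple $u_k \tilde f(x) u_k^*$ still has (approximately) the correct moments so that the next application of Theorem \ref{Don} is legitimate. This is handled by choosing the $\epsilon$'s in a rapidly decreasing sequence and noting that conjugation preserves traces exactly, so the moment conditions are automatically maintained; the only genuine care is ensuring $\sum_k \|u_k - u_{k-1}\|_2 < \infty$ by absorbing the new unitary from Theorem \ref{Don} into the partial product with small $2$-norm perturbation. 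A secondary point is that unitaries in $\prod M_n/\oplus_{2,\omega} M_n$ lift to (asymptotic) unitaries and the $2$-norm limit of unitaries is unitary in this $II_1$-factor, which I would cite rather than prove. Once the limit unitary is produced, $f = u^* g u$ follows immediately since the two sides agree on a set of generators of $A$.
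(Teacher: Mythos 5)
You correctly reduce via Lemma \ref{bicommutant} to showing that the factored maps $\tilde f, \tilde g$ are unitarily conjugate, and you correctly identify Hadwin's Theorem \ref{Don} as the engine, applied to hyperfinite von Neumann subalgebras of $\pi_{tr f}(A)''$ (hyperfinite by Connes' theorem). But the passage from approximate to exact conjugacy in your argument has a genuine gap. You propose to take $B = \prod M_n/\oplus_{2,\omega} M_n$ in Theorem \ref{Don} and then run a one-sided $\|\cdot\|_2$-intertwining, claiming $\sum_k \|u_k - u_{k-1}\|_2 < \infty$ can be arranged. Theorem \ref{Don} only produces, at each stage, \emph{some} unitary conjugating the two finite tuples to within $\epsilon$ in $\|\cdot\|_2$; it gives no control over where that unitary sits. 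Two unitaries accomplishing the stage-$n$ conjugation differ by a unitary in the approximate relative commutant of $\tilde f\bigl(W^*(x_1,\ldots,x_n)\bigr)$ inside the ultraproduct, and such relative commutants are large (typically $II_1$-factors), so successive conjugators supplied by Theorem \ref{Don} can wander arbitrarily. What you would need is a nearby-subalgebra / stability statement of the form "if the tuples are already $\|\cdot\|_2$-close and have the correct moments, the conjugating unitary can be chosen close to $1$", which is not contained in Theorem \ref{Don}; your phrase about "absorbing the new unitary \ldots with small $2$-norm perturbation" asserts exactly this without justification.

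The paper avoids any limiting argument inside the quotient. It lifts each $\tilde f(a_i)$ and $\tilde g(a_i)$ coordinate-wise to sequences $\bigl(b_k^{(i)}\bigr), \bigl(c_k^{(i)}\bigr) \in \prod M_k$, applies Theorem \ref{Don} with $B = M_k$ (and its trace $tr_k$) for all $k$ in a set $E_n \in \omega$ on which the finitely many moment inequalities at tolerance $\delta_n$ hold, then replaces the $E_n$ by a decreasing chain in $\omega$ with empty intersection and defines a single coordinate-wise unitary $u_k := u_{n(k),k}$ by reading off which $E_n\setminus E_{n+1}$ the index $k$ falls into. Passing to the quotient gives an \emph{exact} intertwiner $u = q\bigl((u_k)_k\bigr)$ with no Cauchy or limiting argument; this is the countable-saturation property of the tracial ultraproduct executed by hand. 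To repair your version you should either reproduce this $\omega$-diagonalization or invoke $\aleph_1$-saturation of $\prod M_n/\oplus_{2,\omega} M_n$ explicitly; the one-sided $\|\cdot\|_2$-intertwining as written does not close.
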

 \begin{proof} By Lemma \ref{bicommutant} it is sufficient to prove that $\tilde f$ and $\tilde g$ are unitarily equivalent. Let $a_1, a_2, \ldots$ be a dense subset of $A$. Let $\left(b^{(i)}_k\right)_{k\in \mathbb N}$ be a preimage of $f(a_i)$ in $\prod M_k$, and $\left(c^{(i)}_k\right)_{k\in \mathbb N}$ be a preimage of $g(a_i)$ in $\prod M_k$.  For any monomial $m$,  any $i$  and $s$ we have

 \begin{equation}\label{unitaryEquivalence1} tr \tilde f (m(\pi_{tr f}(a_1), \ldots, \pi_{tr f}(a_s))) = tr\; m(f(a_1), \ldots, f(a_s)) = \lim_{\omega} tr_k m(b_k^{(1)}, \ldots, b_k^{(s)}),
 \end{equation}
 \begin{equation}\label{unitaryEquivalence2} tr \tilde g (m(\pi_{tr f}(a_1), \ldots, \pi_{tr f}(a_s))) = tr\; m(g(a_1), \ldots, g(a_s)) = \lim_{\omega} tr_k m(c_k^{(1)}, \ldots, c_k^{(s)}).
 \end{equation}

 Let $n\in \mathbb N$. The von Neumann algebra $\mathcal A_0 = W^*(\pi_{tr f}(a_1), \ldots, \pi_{tr f}(a_n))$ is a von Neumann subalgebra of $\pi_{tr f}(A)''$ and hence by Connes's theorem is hyperfinite.   For $\mathcal A_0$ and $\epsilon = \frac{1}{n}$ let $\delta$ and $N$ be as in Theorem \ref{Don}.
 By (\ref{unitaryEquivalence1}) and (\ref{unitaryEquivalence2}) there exists $E_n\in \omega$ such that for any $k\in E_n$, and any  monomial $m$ of degree less than $N$
 $$|tr \tilde f \left(m((\pi_{tr f}(a_1), \ldots, \pi_{tr f}(a_n))\right) - tr_k m(b_k^{(1)}, \ldots, b_k^{(n)})|<\delta,$$
 $$|tr \tilde g \left(m((\pi_{tr f}(a_1), \ldots, \pi_{tr f}(a_n))\right) - tr_k m(c_k^{(1)}, \ldots, c_k^{(n)})|<\delta.$$
 Then, since $tr \tilde f = tr \tilde g$, by Theorem \ref{Don}, for any $k\in E_n$   there is a unitary $u_{k, n}\in M_k$ such that
  $$\|b_k^{(i)} - u_{n, k}^*c_k^{(i)}u_{n, k}\|_2 < 1/n,$$  $i = 1, \ldots, n$.

We have $$E_1\supset E_2 \supset \ldots$$  Since $\omega$ is a non-trivial ultrafilter, there is a decreasing sequence $F_n\in \omega$, $n\in \mathbb N$, with $\bigcap F_n = \emptyset$. Replacing $E_n$ with $E_n\bigcap F_n$ we can assume $$\bigcap E_n = \emptyset.$$
Then for any $k$ there exists unique $n= n(k)$ such that $k\in E_n\setminus E_{n+1}$. Let $u_k:= u_{n(k), k}.$ Then
$$\|b_k^{(i)} - u_k^*c_k^{(i)}u_k\|_2\to_{k\to \omega} 0.$$ Let $u = q\left(\left(u_k\right)_{k\in \mathbb N}\right).$ Then
$$f(a_i) = u^*g(a_i)u, $$ for each $i$.

 \end{proof}

 \begin{corollary}\label{exact} Let $A$ be a separable exact C*-algebra. Let $f, g: A \to \prod M_n/\oplus_{2, \omega} M_n$ be  $\ast$-homomorphisms such that $tr \; f$ is an amenable trace  on $A$ and
 $tr \; f = tr \; g$. Then $f = u^*gu$, for some unitary $u\in \prod M_n/\oplus_{2, \omega} M_n$.
 \end{corollary}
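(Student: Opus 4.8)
The plan is to reduce Corollary~\ref{exact} to Corollary~\ref{unitaryEquivalence} by showing that the hypothesis ``$tr\,f$ is an amenable trace on $A$'' together with exactness of $A$ forces $\pi_{tr f}(A)''$ to be hyperfinite. Once this is established the conclusion is immediate: Corollary~\ref{unitaryEquivalence} applies verbatim, producing the desired unitary $u \in \prod M_n/\oplus_{2,\omega} M_n$ with $f = u^*gu$.

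The main step is therefore the hyperfiniteness claim. Here I would invoke the standard theory of amenable traces on exact C*-algebras, as developed in Brown's memoir \cite{BrownTraces}. Recall that a trace $\tau$ on $A$ gives a GNS representation $\pi_\tau$ with associated finite von Neumann algebra $M_\tau = \pi_\tau(A)''$, equipped with the normal extension $\bar\tau$ of $\tau$. When $\tau$ is amenable, $M_\tau$ is an injective (equivalently, by Connes' theorem, hyperfinite) von Neumann algebra --- this is one of the characterizations of amenable traces; conversely, on an exact C*-algebra a trace is amenable precisely when the associated von Neumann algebra is hyperfinite (the exactness is what makes the ``$W^*$-amenable $\Rightarrow$ amenable'' direction work, via the local reflexivity / nuclear embeddability available for exact algebras). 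Since in our situation $\pi_{tr f}$ is exactly the GNS representation of $A$ with respect to the trace $\tau := tr\,f$, hyperfiniteness of $\pi_{tr f}(A)''$ follows.

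With that in hand, the proof is a one-line deduction. Explicitly: set $\tau = tr\,f = tr\,g$; since $\tau$ is amenable and $A$ is exact, $\pi_{tr f}(A)''$ is hyperfinite; now all hypotheses of Corollary~\ref{unitaryEquivalence} are met, so there is a unitary $u \in \prod M_n/\oplus_{2,\omega} M_n$ with $f = u^*gu$.

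The step I expect to be the only real content is the implication ``amenable trace on an exact C*-algebra $\Rightarrow$ associated von Neumann algebra is hyperfinite''. This is not hard but it does genuinely use exactness --- for a non-exact $A$ an amenable trace need not have hyperfinite GNS von Neumann algebra, which is exactly why the exactness hypothesis appears in the statement. I would cite \cite{BrownTraces} (or \cite{BO}) for the precise statement rather than reprove it, since it is standard. Everything else is a formal appeal to the already-proved Corollary~\ref{unitaryEquivalence}, so there is no additional obstacle.
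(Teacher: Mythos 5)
Your proposal is correct and matches the paper's proof: both reduce to Corollary~\ref{unitaryEquivalence} by invoking the fact (from Brown's memoir, Cor.~4.3.6) that an amenable trace on an exact C*-algebra has hyperfinite GNS von Neumann algebra. No meaningful difference in approach.
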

 \begin{proof}   Since $A$ is exact and $tr f$  is amenable, by \cite[Cor. 4.3.6]{BrownTraces} $\pi_{tr f}(A)''$  is hyperfinite.
 The statement follows now from the previous corollary.
 \end{proof}

 \begin{theorem}\label{qdTracesExact}  Suppose $A$ is a separable exact C*-algebra, $A$ is homotopy dominated by a C*-algebra $B$, and all amenable traces on $B$ are quasidiagonal. Then all amenable traces on $A$ are quasidiagonal.
 \end{theorem}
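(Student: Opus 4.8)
The plan is to combine the trace characterization machinery with the homotopy lifting theorem, exactly in the spirit of the earlier applications like Theorem~\ref{MFhominv}. Let $\tau$ be an amenable trace on $A$. Since $A$ is exact and $\tau$ is amenable, there is a $\ast$-homomorphism $f:A\to \prod M_n/\oplus_{2,\omega}M_n$ with $\tau=tr\circ f$, and by \cite[Cor.4.3.6]{BrownTraces} the von Neumann algebra $\pi_{tr f}(A)''$ is hyperfinite. The goal is to show $f$ lifts (asymptotically) to a discrete asymptotic homomorphism into $\prod M_n$, because then Proposition~\ref{propositionMFtrace}(iii)$\Rightarrow$(i) (in its quasidiagonal form, using cp maps) shows $\tau$ is quasidiagonal. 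So the real content is a lifting statement, and that is where homotopy domination enters.

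First I would set up the homotopy domination: there are $\ast$-homomorphisms $\alpha:A\to B$, $\beta:B\to A$ with $\beta\circ\alpha\sim id_A$. The trace $\tau\circ\beta$ on $B$ is again amenable (amenability passes along $\ast$-homomorphisms), hence by hypothesis it is quasidiagonal. By Proposition~\ref{propositionMFtrace}, the associated $\ast$-homomorphism $g:B\to\prod M_n/\oplus_{2,\omega}M_n$ with $tr\circ g=\tau\circ\beta$ lifts (asymptotically) to a cp discrete asymptotic homomorphism $B\to\prod M_n$. Now $f\circ\beta:A\to\prod M_n/\oplus_{2,\omega}M_n$ satisfies $tr\circ(f\circ\beta)=\tau\circ\beta=tr\circ g$, and $\tau\circ\beta$ is amenable on the exact algebra $A$, so by Corollary~\ref{exact} $f\circ\beta$ and $g$ are unitarily equivalent in $\prod M_n/\oplus_{2,\omega}M_n$; conjugating the lift of $g$ by a unitary lift of the intertwiner shows $f\circ\beta$ also lifts (asymptotically) to a discrete asymptotic homomorphism. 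Hence $f\circ\beta\circ\alpha=f\circ(\beta\circ\alpha)$ lifts as well.

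Next, since $\beta\circ\alpha\sim id_A$, the $\ast$-homomorphisms $f$ and $f\circ\beta\circ\alpha$ from $A$ to $\prod M_n/\oplus_{2,\omega}M_n$ are homotopic. By the homotopy lifting theorem (Theorem~\ref{DiscreteHomotopyLifting}, or rather its asymptotic-lifting version noted in the remark after Corollary~\ref{HomInvGen}), since $f\circ\beta\circ\alpha$ lifts (asymptotically) to a discrete asymptotic homomorphism, so does $f$. Then Proposition~\ref{propositionMFtrace}(iii)$\Rightarrow$(i) for quasidiagonal traces yields that $\tau=tr\circ f$ is quasidiagonal. One point of care: the various cp-versus-not-cp subtleties between Proposition~\ref{propositionMFtrace}(ii) and (iii) and the cp-homotopy-lifting Corollary~\ref{cpHomotopyLiftingCor1} versus the unrestricted Theorem~\ref{DiscreteHomotopyLifting} need to be tracked, since the quasidiagonal case wants the \emph{cp} asymptotic lift. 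One would use the cp version (Theorem~\ref{ccpcylinder}, Corollary~\ref{cpHomotopyLiftingCor1}) throughout, noting that $f$ automatically has a cp lift (lift a Hamel basis, as in the MF-trace remark) so the cp-lift hypothesis of Corollary~\ref{cpHomotopyLiftingCor1} is satisfied.

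The main obstacle I expect is the bookkeeping around unitary equivalence and lifting of the intertwiner: Corollary~\ref{exact} gives a unitary $u$ in the quotient $\prod M_n/\oplus_{2,\omega}M_n$, but to transport an \emph{asymptotic} lift of $g$ into an asymptotic lift of $f\circ\beta$ one must lift $u$ to a unitary (or at least a path of unitaries/contractions) in $\prod M_n$ and check that conjugation preserves the asymptotic-homomorphism property and the asymptotic-lifting property into the right quotient $\prod M_n/\oplus M_n$ (note: $\oplus M_n$, not $\oplus_{2,\omega}M_n$, appears in the definition of asymptotic lifting underlying Proposition~\ref{propositionMFtrace}). Unitaries in a quotient of a C*-algebra with vanishing $K_1$-type obstruction lift, and $\prod M_n$ has the right properties, so this should go through, but it is the step requiring the most care. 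The rest is a direct assembly of the quoted results.
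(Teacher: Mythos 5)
Your overall plan is the same as the paper's, but two steps go wrong. The more serious one is the application of Corollary~\ref{exact}: you compare $g$ and $f\circ\beta$, both of which are $\ast$-homomorphisms from $B$ into $\prod M_n/\oplus_{2,\omega}M_n$ (note that $f\circ\beta$ is defined on $B$, not on $A$ as you wrote), carrying the trace $\tau\circ\beta$ on $B$. But Corollary~\ref{exact} requires its source algebra to be separable and exact, and $B$ is not assumed exact; only $A$ is. The phrase ``$\tau\circ\beta$ is amenable on the exact algebra $A$'' conflates the two algebras, and without a further argument that $\pi_{\tau\circ\beta}(B)''$ is hyperfinite, neither Corollary~\ref{exact} nor Corollary~\ref{unitaryEquivalence} applies. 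The paper avoids this by comparing $g\circ\alpha$ with $f\circ(\beta\circ\alpha)$ instead: both are $\ast$-homomorphisms from the exact algebra $A$ carrying the same amenable trace $\tau\circ\beta\circ\alpha$, so Corollary~\ref{exact} does apply, one transports the cp asymptotic lift of $g\circ\alpha$ by a lifted unitary to get a cp asymptotic lift of $f\circ(\beta\circ\alpha)$, and then Corollary~\ref{cpHomotopyLiftingCor1} together with the homotopy $\beta\circ\alpha\sim id_A$ lifts $f$ itself.

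The second error is the justification that $f$ has a cp lift by lifting a Hamel basis. A Hamel-basis lift is linear but not completely positive (nor even bounded); that device appears in the remark about MF-traces precisely because the MF notion involves arbitrary maps rather than cp ones. Here the cp lift of $f$ comes from the amenability of $\tau$: by definition there are ccp maps $\phi_n: A\to M_n$, approximately multiplicative in $\|\cdot\|_2$, with $tr\circ\phi_n\to\tau$ pointwise, and $(\phi_n)_n$ is exactly a ccp lift of $f$ to $\prod M_n$. This is what makes the cp-lift hypothesis of Corollary~\ref{cpHomotopyLiftingCor1} hold, and it must be invoked explicitly.
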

 \begin{proof} By assumption there exist $\ast$-homomorphisms $\alpha: B\to A$ and $\beta: A \to B$ such that $\alpha\circ \beta$ is homotopic to $id_A$. Let $\tau$ be an amenable trace on $A$.  This reformulates as  $\tau = tr \; f$, where $f: A \to \prod M_n/\oplus_{2, \omega} M_n$ is a $\ast$-homomorphism that has a cp lift to $\prod M_n$.
 Since $\tau\circ \alpha$ is an amenable trace on $B$, it is quasidiagonal. By Proposition \ref{propositionMFtrace} there exists a $\ast$-homomorphism $g: B \to \prod M_n/\oplus_{2, \omega} M_n$ such that
 $\tau\circ \alpha = tr \; g$ and $g$ lifts to a cp discrete asymptotic homomorphism. We have
 $$tr\; g\circ \beta = \tau \circ \alpha \circ \beta = tr \;f\circ \alpha \circ \beta .$$
 Since $tr \;f\circ \alpha \circ \beta $ is amenable and  $A$ is exact, by Corollary \ref{exact}  there exists a unitary $u\in \prod M_n/\oplus_{2, \omega} M_n$ such that
 $$u^*(g\circ \beta)u  = f\circ \alpha\circ \beta.$$ Since unitaries in $\prod M_n/\oplus_{2, \omega} M_n$ lift to unitaries in $\prod M_n$ and $g\circ \beta$ lifts to a cp discrete asymptotic homomorphism since $g$ does, so does $f\circ \alpha\circ \beta$.
 Since $id_A$ is homotopic to $\alpha \circ \beta$, and $f$ has a cp lift, by Theorem \ref{cpHomotopyLiftingCor1} $f$ lifts to a cp discrete asymptotic homomorphism.  By Proposition \ref{propositionMFtrace} again,  $\tau$ is  quasidiagonal.
\end{proof}

\medskip

\subsection{Additional assumptions on $B$}
Now we will put assumptions on $B$ instead of putting them on $A$.

\begin{theorem}\label{HomLiftTraces} If all $\ast$-homomorphisms from $B$ to $\prod M_n/\oplus_{2, \omega} M_n$ (asymptotically) lift to discrete asymptotic homomorphisms to $\prod M_n$, then

1) all hyperlinear traces on $B$  are MF,

\medskip

2) if $A$ is  homotopy dominated by $B$, then all $\ast$-homomorphisms from $A$ to $\prod M_n/\oplus_{2, \omega} M_n$ (asymptotically) lift to discrete asymptotic homomorphisms to $\prod M_n$. In particular all hyperlinear traces on $A$  are MF.
\end{theorem}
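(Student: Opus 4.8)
The plan is to obtain both assertions by assembling results already in hand: Proposition \ref{propositionMFtrace} takes care of (1), while the instance of Corollary \ref{HomInvGen} for the quotient $\prod M_n/\oplus_{2,\omega} M_n$ takes care of the lifting statement in (2). Throughout one runs the ``lift'' and ``asymptotically lift'' versions in parallel, invoking the Remark following Corollary \ref{HomInvGen} to know that Theorem \ref{DiscreteHomotopyLifting}, and hence Corollary \ref{HomInvGen}, is valid for asymptotic liftings as well. For (1), recall that a trace $\tau$ on $B$ is hyperlinear exactly when $\tau = tr\circ f$ for some $\ast$-homomorphism $f\colon B\to\prod M_n/\oplus_{2,\omega} M_n$. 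By hypothesis such an $f$ (asymptotically) lifts to a discrete asymptotic homomorphism $B\to\prod M_n$, and this is precisely condition (ii) of Proposition \ref{propositionMFtrace} — or, in the asymptotic-lifting version, condition (iii) with the constant family $f^{k}=f$ — with the optional cp requirement dropped. That proposition then yields that $\tau$ is an MF trace.

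For (2), I would first observe that $\oplus_{2,\omega} M_n$ is a norm-closed two-sided ideal of $\prod M_n$: closedness follows from $\|\cdot\|_2\le\|\cdot\|$, and the ideal property from $\|ST\|_2\le\|S\|\,\|T\|_2$ and $\|TS\|_2\le\|S\|\,\|T\|_2$. Hence $\prod M_n/\oplus_{2,\omega} M_n$ is a quotient $D/I$ with $D=\prod M_n$, $I=\oplus_{2,\omega} M_n$, and (the algebras being separable, as throughout this section) Theorem \ref{DiscreteHomotopyLifting} and Corollary \ref{HomInvGen} apply to it. Since ``$A$ is homotopy dominated by $B$'' means ``$B$ homotopically dominates $A$'', Corollary \ref{HomInvGen} and its asymptotic-lifting analogue, applied with this $D/I$, give that every $\ast$-homomorphism from $A$ to $\prod M_n/\oplus_{2,\omega} M_n$ (asymptotically) lifts to a discrete asymptotic homomorphism to $\prod M_n$. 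Concretely: if $\beta\colon A\to B$ and $\alpha\colon B\to A$ witness the domination with $\alpha\circ\beta\sim id_A$, then for $f\colon A\to\prod M_n/\oplus_{2,\omega} M_n$ the composite $f\circ\alpha$ (asymptotically) lifts by hypothesis; precomposing the lift with $\beta$ — a discrete asymptotic homomorphism precomposed with a $\ast$-homomorphism is again one, and this preserves the (asymptotic) lifting relation — shows $f\circ\alpha\circ\beta$ (asymptotically) lifts; and since $f\sim f\circ\alpha\circ\beta$, Theorem \ref{DiscreteHomotopyLifting} finishes it. The final ``in particular'' is then (1) applied to $A$, which now satisfies the standing hypothesis.

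There is no genuine obstacle here; the statement is a repackaging of Proposition \ref{propositionMFtrace}, Theorem \ref{DiscreteHomotopyLifting}, and Corollary \ref{HomInvGen}. The only points needing a word of care are verifying that $\prod M_n/\oplus_{2,\omega} M_n$ is honestly a C*-quotient (so that the homotopy-lifting machinery applies), keeping the ``lifting'' and ``asymptotic lifting'' versions synchronized via the Remark after Corollary \ref{HomInvGen}, and recording that precomposition with the $\ast$-homomorphism $\beta$ preserves both the discrete-asymptotic-homomorphism property and the (asymptotic) lift.
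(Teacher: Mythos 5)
Your proposal is correct and follows essentially the same route as the paper: part (1) is Proposition \ref{propositionMFtrace} applied to the hyperlinear representative $\tau = \mathrm{tr}\circ f$, and part (2) is Corollary \ref{HomInvGen} (with the Remark supplying the asymptotic-lifting variant). The only difference is that you spell out details the paper leaves implicit, such as checking that $\oplus_{2,\omega} M_n$ is a closed two-sided ideal and that precomposition with a $\ast$-homomorphism preserves discrete asymptotic homomorphisms and their (asymptotic) lifts.
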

\begin{proof} 1) Let $\tau$ be a hyperlinear trace on $B$. Then $\tau = tr \; f$, for some $\ast$-homomorphism $f: B \to \prod M_n/\oplus_{2, \omega} M_n$. By assumption $f$ lifts (asymptotically) to a discrete asymptotic homomorphism. By Proposition \ref{propositionMFtrace} $\tau$ is an MF trace.

2) Follows from Corollary \ref{HomInvGen} and 1).
\end{proof}

\begin{theorem}\label{HomLiftTraces2} If all $\ast$-homomorphisms from $B$ to $\prod M_n/\oplus_{2, \omega} M_n$ that have a cp lift to $\prod M_n$ (asymptotically) lift to cp discrete asymptotic homomorphisms to $\prod M_n$, then

1) all amenable  traces on $B$  are quasidiagonal,

\medskip

2) if $A$ is  homotopy dominated by $B$, then all $\ast$-homomorphisms from $A$ to $\prod M_n/\oplus_{2, \omega} M_n$ that have a cp lift to $\prod M_n$ (asymptotically) lift to cp discrete asymptotic homomorphisms to $\prod M_n$. In particular all amenable traces on $A$  are quasidiagonal.
\end{theorem}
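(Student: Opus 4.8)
The plan is to follow the proof of Theorem \ref{HomLiftTraces} almost verbatim, simply substituting the completely positive versions of the two ingredients: Corollary \ref{cpHomotopyLiftingCor2} in place of Corollary \ref{HomInvGen}, and the quasidiagonal clause of Proposition \ref{propositionMFtrace} in place of its MF clause.

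For part 1), I would begin with an amenable trace $\tau$ on $B$. As already noted in the proof of Theorem \ref{qdTracesExact}, being an amenable trace is the same as being of the form $\tau = tr\, f$ for some $\ast$-homomorphism $f : B \to \prod M_n/\oplus_{2,\omega} M_n$ that admits a completely positive lift to $\prod M_n$. The hypothesis of the theorem then says exactly that $f$ (asymptotically) lifts to a cp discrete asymptotic homomorphism $B \to \prod M_n$. Feeding this into the implication (ii)$\Rightarrow$(i) --- or (iii)$\Rightarrow$(i) in the asymptotic case --- of Proposition \ref{propositionMFtrace}, in its ``quasidiagonal'' reading, yields that $\tau$ is a quasidiagonal trace.

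For part 2), set $D = \prod M_n$ and $I = \oplus_{2,\omega} M_n$, so that $\prod M_n/\oplus_{2,\omega} M_n = D/I$. Since $A$ is homotopy dominated by $B$, Corollary \ref{cpHomotopyLiftingCor2} transports the hypothesis from $B$ to $A$: every $\ast$-homomorphism $A \to D/I$ with a cp lift to $D$ (asymptotically) lifts to a cp discrete asymptotic homomorphism $A \to D$. Now apply part 1) with $A$ in place of $B$ to conclude that all amenable traces on $A$ are quasidiagonal.

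The only point requiring a bit of care is the parenthetical ``(asymptotically)'': in the literal-lifting case everything is immediate, while in the asymptotic-lifting case one needs the completely positive analogue of the remark following Corollary \ref{HomInvGen}, namely that Corollary \ref{cpHomotopyLiftingCor2} (hence Theorem \ref{ccpcylinder}) still holds with asymptotic liftings throughout, and that clause (iii) of Proposition \ref{propositionMFtrace} is the asymptotic counterpart of clause (ii). I do not anticipate any genuine obstacle here; the argument is essentially a two-step unwinding of definitions, and the substantive work has already been done in Corollary \ref{cpHomotopyLiftingCor2} and Proposition \ref{propositionMFtrace}.
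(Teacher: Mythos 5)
Your proposal is correct and takes essentially the same approach as the paper: the paper's proof is precisely ``same as Theorem \ref{HomLiftTraces}, using Corollary \ref{cpHomotopyLiftingCor2} instead of Corollary \ref{HomInvGen},'' which is the substitution you carry out, together with the cp/quasidiagonal reading of Proposition \ref{propositionMFtrace}.
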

\begin{proof} Same as proof of Theorem \ref{HomLiftTraces}, using Theorem \ref{cpHomotopyLiftingCor2} instead of Corollary \ref{HomInvGen}.
\end{proof}

\medskip

We will obtain some corollaries of Theorems \ref{HomLiftTraces} and \ref{HomLiftTraces}. For that we will need to show that the condition imposed on $B$ in these theorems reformulates in terms of traces when $B$ is exact.

\begin{proposition}\label{ExactLiftEquivTraces} Suppose $B$ is exact. TFAE:

(i) Every amenable trace on $B$ is quasidiagonal;

(ii) Every $\ast$-homomorphism from $B$ to $\prod M_n/\oplus_{2, \omega} M_n$ that has  a cp lift to $\prod M_n$, lifts to  a cp asymptotic homomorphism to $\prod M_n$.
\end{proposition}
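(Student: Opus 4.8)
### Proof proposal for Proposition~\ref{ExactLiftEquivTraces}

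The plan is to prove the two implications separately, with the forward direction (i)$\Rightarrow$(ii) being the substantial one. The key reformulation to keep in mind throughout is Proposition~\ref{propositionMFtrace}: a trace $\tau$ on $B$ is quasidiagonal precisely when $\tau = tr\, f$ for a $\ast$-homomorphism $f\colon B\to \prod M_n/\oplus_{2,\omega} M_n$ that lifts to a cp discrete asymptotic homomorphism to $\prod M_n$. Combined with the fact that for exact $B$ the amenable traces are exactly the traces of the form $tr\, f$ with $f$ admitting a cp lift to $\prod M_n$ (this is the lifting characterization of amenable traces used already in the proof of Theorem~\ref{qdTracesExact}), the proposition says: ``every $f$ with a cp lift has $tr\, f$ quasidiagonal'' is equivalent to ``every $f$ with a cp lift itself lifts to a cp asymptotic homomorphism.''

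For (ii)$\Rightarrow$(i): let $\tau$ be an amenable trace on $B$. Since $B$ is exact, $\tau = tr\, f$ for some $\ast$-homomorphism $f\colon B\to \prod M_n/\oplus_{2,\omega} M_n$ that has a cp lift to $\prod M_n$. By hypothesis (ii), $f$ lifts to a cp asymptotic homomorphism to $\prod M_n$; then Proposition~\ref{propositionMFtrace} (the equivalence (ii)$\Leftrightarrow$(i) there, in the quasidiagonal case) immediately gives that $\tau$ is quasidiagonal. This direction does not use exactness beyond the first sentence.

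For (i)$\Rightarrow$(ii): let $f\colon B\to \prod M_n/\oplus_{2,\omega} M_n$ have a cp lift to $\prod M_n$. Then $tr\, f$ is an amenable trace on $B$ (using exactness), hence by (i) it is quasidiagonal. By Proposition~\ref{propositionMFtrace} there is \emph{another} $\ast$-homomorphism $g\colon B\to \prod M_n/\oplus_{2,\omega} M_n$ with $tr\, g = tr\, f$ and with $g$ lifting to a cp discrete asymptotic homomorphism to $\prod M_n$. Now $tr\, f$ is amenable and $B$ is exact, so Corollary~\ref{exact} applies: there is a unitary $u\in \prod M_n/\oplus_{2,\omega} M_n$ with $f = u^* g u$. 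Since unitaries in $\prod M_n/\oplus_{2,\omega} M_n$ lift to unitaries $\tilde u = (\tilde u_n)_n$ in $\prod M_n$, we can conjugate the cp asymptotic lift of $g$ coordinatewise by $\tilde u_n^*$ to obtain a cp asymptotic homomorphism lifting $f$. This produces the desired lift of $f$, establishing (ii).

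The main obstacle is the (i)$\Rightarrow$(ii) direction, and specifically the step of replacing $f$ by the well-behaved $g$ with the same trace and then transporting $g$'s lift back to $f$. This is exactly where exactness is essential: it is needed both to know $tr\, f$ is amenable (so that (i) applies to it) and to invoke Corollary~\ref{exact}, whose proof rests on hyperfiniteness of $\pi_{tr\, f}(B)''$ via \cite[Cor.~4.3.6]{BrownTraces} together with Hadwin's approximate-equivalence theorem (Theorem~\ref{Don}). One should also record the routine point that the unitary conjugation does not disturb the asymptotic-homomorphism property or the complete positivity of the lift, and that it is compatible with the quotient map $q$, so that the conjugated family still lifts $f$ rather than some other homomorphism; this is immediate but worth stating. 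For the parenthetical ``asymptotically lift'' variant the same argument goes through verbatim, since Proposition~\ref{propositionMFtrace}(iii) and Corollary~\ref{exact} are both available in that form.
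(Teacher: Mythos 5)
Your proof is correct and follows essentially the same route as the paper: for (i)$\Rightarrow$(ii) you produce $g$ with $tr\,g = tr\,f$ and a cp asymptotic lift via Proposition~\ref{propositionMFtrace}, then use Corollary~\ref{exact} to conjugate $f$ onto $g$ by a liftable unitary; for (ii)$\Rightarrow$(i) you unwind Proposition~\ref{propositionMFtrace} directly (the paper simply cites Theorem~\ref{HomLiftTraces2}, which proves the same thing). The only small inaccuracy is the remark that (ii)$\Rightarrow$(i) ``uses exactness in the first sentence'': the reformulation of amenable traces as $\tau = tr\,f$ with $f$ admitting a cp lift is available without exactness (it is Schafhauser's reformulation, and it is exactly why the paper proves this direction in Theorem~\ref{HomLiftTraces2} with no exactness hypothesis).
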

\begin{proof} (i) $\Rightarrow$ (ii): Let $f: B \to \prod M_n/\oplus_{2, \omega} M_n$ be a $\ast$-homomorphism that lifts to a cp map to $\prod M_n$. Then $tr \;f$ is amenable. Hence it is quasidiagonal.  By Proposition \ref{propositionMFtrace} $tr \;f = tr \;g$, for some $\ast$-homomorphism $g:  B \to \prod M_n/\oplus_{2, \omega} M_n$ that lifts to a cp asymptotic homomorphism. By Corollary \ref{exact}, $f= u^*gu$, for some unitary $u\in \prod M_n/\oplus_{2, \omega} M_n$. Since $g$ lifts to a cp asymptotic homomorphism and $u$ lifts to a unitary in $\prod M_n$, $f$ also lifts to a cp asymptotic homomorphism.

(ii) $\Rightarrow$ (i): This is already proved in Theorem \ref{HomLiftTraces2}, without the exactness assumption.
\end{proof}

\begin{corollary}\label{BExact} If a C*-algebra $A$ is homotopically dominated by an exact C*-algebra $B$ and all amenable traces on $B$ are quasidiagonal, then all amenable traces on $A$ are quasidiagonal.
\end{corollary}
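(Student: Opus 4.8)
The plan is to chain Proposition~\ref{ExactLiftEquivTraces} together with Theorem~\ref{HomLiftTraces2}, using the exactness of $B$ only to convert the hypothesis about traces of $B$ into the lifting hypothesis that Theorem~\ref{HomLiftTraces2} requires.

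First I would apply Proposition~\ref{ExactLiftEquivTraces} to $B$: since $B$ is exact and, by assumption, every amenable trace on $B$ is quasidiagonal, the implication (i)$\Rightarrow$(ii) there shows that every $\ast$-homomorphism from $B$ to $\prod M_n/\oplus_{2,\omega} M_n$ that admits a cp lift to $\prod M_n$ in fact lifts to a cp (discrete) asymptotic homomorphism to $\prod M_n$. This is precisely the standing hypothesis of Theorem~\ref{HomLiftTraces2}.

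Next, since $A$ is homotopically dominated by $B$, part 2) of Theorem~\ref{HomLiftTraces2} applies verbatim and yields that every $\ast$-homomorphism from $A$ to $\prod M_n/\oplus_{2,\omega} M_n$ with a cp lift to $\prod M_n$ lifts to a cp (discrete) asymptotic homomorphism, and in particular that all amenable traces on $A$ are quasidiagonal. Unwinding what is going on inside Theorem~\ref{HomLiftTraces2}: an amenable trace $\tau$ on $A$ is of the form $tr\circ f$ for a $\ast$-homomorphism $f\colon A\to \prod M_n/\oplus_{2,\omega}M_n$ with a cp lift (Schafhauser's reformulation, cf.\ Proposition~\ref{propositionMFtrace}); composing with the homotopy-domination maps and invoking Corollary~\ref{cpHomotopyLiftingCor2} produces a cp discrete asymptotic lift of $f$; and Proposition~\ref{propositionMFtrace} then reads off that $\tau$ is quasidiagonal.

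I do not expect any genuine obstacle here; the argument is a pure composition of results already in hand. The only point that deserves a moment's attention is matching the two slightly different phrasings of the lifting condition --- ``cp asymptotic homomorphism'' in Proposition~\ref{ExactLiftEquivTraces}(ii) versus ``cp discrete asymptotic homomorphism'' in Theorem~\ref{HomLiftTraces2} --- but this is harmless since the lifts constructed in the proof of Proposition~\ref{ExactLiftEquivTraces} (through Proposition~\ref{propositionMFtrace}) are automatically discrete, indexed by $\mathbb N$. Note also that exactness is assumed only for $B$, not for $A$, which is exactly the improvement over Theorem~\ref{qdTracesExact}.
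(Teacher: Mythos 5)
Your proposal is correct and follows exactly the paper's own proof: apply Proposition~\ref{ExactLiftEquivTraces} to convert the trace hypothesis on $B$ into the lifting hypothesis of Theorem~\ref{HomLiftTraces2}, then invoke part 2) of that theorem. Your extra remarks on the discrete/continuous phrasing mismatch and on exactness only being needed for $B$ are accurate but not load-bearing.
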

\begin{proof} By Proposition \ref{ExactLiftEquivTraces}, all $\ast$-homomorphisms from $B$ to $ \prod M_n/\oplus_{2, \omega} M_n$ that have a cp lift, lift to cp asymptotic homomorphisms. The statement follows now from Theorem \ref{HomLiftTraces2}.
\end{proof}

\begin{corollary}\label{BTWW} If a C*-algebra $A$ is homotopically dominated by an exact C*-algebra $B$ that has a faithful trace and satisfies UCT, then all amenable traces on $A$ are quasidiagonal.
\end{corollary}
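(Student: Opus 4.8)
The plan is to reduce everything to Corollary \ref{BExact}. Assuming, as throughout the paper, that $A$ and $B$ are separable, that corollary says that if $A$ is homotopically dominated by an exact C*-algebra $B$ and every amenable trace on $B$ is quasidiagonal, then every amenable trace on $A$ is quasidiagonal. So it suffices to prove: \emph{if $B$ is separable, exact and satisfies the UCT, then every amenable trace on $B$ is quasidiagonal} (the faithful-trace hypothesis on $B$ will re-enter only at the very end). The theorem of Tikuisis--White--Winter and its refinements \cite{TWW}, \cite{Schafhauser}, \cite{Gabe} give this for \emph{faithful} amenable traces, so the real content is a reduction to the faithful case.

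Given an amenable tracial state $\tau$ on $B$, I would pass to the GNS quotient. Let $\pi_\tau$ be the GNS representation and $I=\ker\pi_\tau$; since the GNS vector of a tracial state is separating for $\pi_\tau(B)''$, one has $I=\{b\in B:\tau(b^*b)=0\}$, hence $\tau$ factors as $\tau=\bar\tau\circ q$ for the quotient $q\colon B\to B/I$ and a \emph{faithful} tracial state $\bar\tau$ on $B/I$. Now $B/I$ is separable and exact, and $\pi_{\bar\tau}(B/I)''=\pi_\tau(B)''$; since $\tau$ is amenable and $B$ is exact, this von Neumann algebra is hyperfinite \cite[Cor.~4.3.6]{BrownTraces}, and restricting the trace-preserving finite-dimensional approximations of a finite hyperfinite von Neumann algebra to $B/I$ shows that $\bar\tau$ is amenable on $B/I$. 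Granting that $B/I$ satisfies the UCT, \cite{TWW}, \cite{Schafhauser}, \cite{Gabe} then give that $\bar\tau$ is quasidiagonal: there are ccp maps $\psi_k\colon B/I\to M_{n_k}$, approximately multiplicative in norm, with $tr\,\psi_k\to\bar\tau$ pointwise. Composing with the $\ast$-homomorphism $q$ produces ccp maps $\psi_k\circ q\colon B\to M_{n_k}$, approximately multiplicative in norm on each finite subset of $B$, with $tr(\psi_k\circ q)\to\bar\tau\circ q=\tau$ pointwise; hence $\tau$ is quasidiagonal on $B$. By Corollary \ref{BExact}, every amenable trace on $A$ is quasidiagonal.

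The delicate step --- and the one I expect to be the main obstacle --- is verifying that the quotient $B/I$ satisfies the UCT. For nuclear $B$ this is automatic from closure of the bootstrap class under quotients; for merely exact $B$ one either needs stability of the UCT class under the relevant quotients, or one sidesteps the reduction entirely by invoking a form of the Tikuisis--White--Winter-type theorem whose faithfulness hypothesis is placed on the algebra --- ``$B$ has a faithful trace'' --- rather than on the individual trace. This last is precisely the hypothesis imposed in Corollary \ref{BTWW}: under such a statement the argument collapses to observing that ``$B$ separable, exact, UCT, with a faithful trace'' gives directly that every amenable trace on $B$ is quasidiagonal, whence Corollary \ref{BExact} transports quasidiagonality of amenable traces to $A$.
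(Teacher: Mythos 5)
Your proposal is correct and, after an exploratory detour, lands on exactly the paper's proof: invoke the Tikuisis--White--Winter theorem (in the form refined by Schafhauser and Gabe, where the faithfulness hypothesis sits on the algebra rather than on the individual trace) to conclude that all amenable traces on the exact UCT algebra $B$ with a faithful trace are quasidiagonal, and then apply Corollary \ref{BExact}. Your GNS-quotient reduction is a reasonable instinct but, as you yourself note, it stalls on whether $B/I$ inherits the UCT when $B$ is merely exact; the paper sidesteps this exactly as your closing paragraph suggests, so the two proofs coincide.
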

\begin{proof} By Tikuisis-White-Winter Theorem (\cite{TWW}, \cite{Gabe}, \cite{Schafhauser}) all amenable traces on $B$ are quasidiagonal. The statement follows now from Corollary \ref{BExact}.
\end{proof}

\begin{corollary}\label{BNuclear} If a C*-algebra $A$ is homotopy dominated by a nuclear C*-algebra $B$ and all (hyperlinear) traces on $B$ are MF, then all hyperlinear traces on $A$ are MF.
\end{corollary}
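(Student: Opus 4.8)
\end{corollary}
\begin{proof}
The plan is to reduce to Theorem~\ref{HomLiftTraces}(2), following the scheme of Corollary~\ref{BExact} but with nuclearity of $B$ replacing the ``exact plus amenable trace'' input. Concretely, it suffices to check the hypothesis of Theorem~\ref{HomLiftTraces} for $B$, namely that every $\ast$-homomorphism $f\colon B\to\prod M_n/\oplus_{2,\omega}M_n$ lifts to a discrete asymptotic homomorphism to $\prod M_n$; granting this, Theorem~\ref{HomLiftTraces}(2) applied to $A$ homotopically dominated by $B$ gives that every $\ast$-homomorphism from $A$ to $\prod M_n/\oplus_{2,\omega}M_n$ lifts to a discrete asymptotic homomorphism, and in particular all hyperlinear traces on $A$ are MF. (We may assume $B$, and hence $A$, separable, passing to a separable subalgebra otherwise, since both nuclearity and the MF property of a trace are detected on separable subalgebras.)

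To verify the hypothesis, fix $f\colon B\to\prod M_n/\oplus_{2,\omega}M_n$ and set $\tau:=tr\,f$, a hyperlinear trace on $B$. By assumption $\tau$ is MF, so Proposition~\ref{propositionMFtrace} supplies a $\ast$-homomorphism $g\colon B\to\prod M_n/\oplus_{2,\omega}M_n$ with $tr\,g=\tau$ that lifts to a discrete asymptotic homomorphism to $\prod M_n$. Here is where nuclearity enters: since $B$ is nuclear it is amenable, whence $\pi_{\tau}(B)''$ is injective and therefore, by Connes's theorem, hyperfinite. Consequently Corollary~\ref{unitaryEquivalence} applies to $f$ and $g$ and yields a unitary $u\in\prod M_n/\oplus_{2,\omega}M_n$ with $f=u^*gu$.

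It then remains to transport the lift across $u$, exactly as in the proof of Theorem~\ref{qdTracesExact}: lift $u$ to a unitary $\tilde u\in\prod M_n$ (unitaries in $\prod M_n/\oplus_{2,\omega}M_n$ lift to unitaries in $\prod M_n$), and if $\gamma_\lambda\colon B\to\prod M_n$ is a discrete asymptotic homomorphism lifting $g$, then $b\mapsto \tilde u^*\gamma_\lambda(b)\tilde u$ is a discrete asymptotic homomorphism lifting $f$. This establishes the hypothesis of Theorem~\ref{HomLiftTraces}, and the corollary follows. The only genuinely non-formal step is obtaining hyperfiniteness of $\pi_{\tau}(B)''$ for the (not necessarily faithful) trace $\tau=tr\,f$; this is precisely where nuclearity is used, in place of Corollary~\ref{exact}. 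Everything else is assembled from Proposition~\ref{propositionMFtrace}, Corollary~\ref{unitaryEquivalence} and Theorem~\ref{HomLiftTraces}, so I do not expect any serious difficulty beyond this bookkeeping.
\end{proof}
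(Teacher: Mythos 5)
Your proof is correct, but it takes a genuinely different route from the paper's. The paper's proof first observes that for nuclear $B$ every trace is amenable and every MF trace is quasidiagonal, so the MF hypothesis becomes ``all amenable traces on $B$ are quasidiagonal''; it then invokes Proposition~\ref{ExactLiftEquivTraces} (nuclear $\Rightarrow$ exact) to deduce that every $\ast$-homomorphism from $B$ to $\prod M_n/\oplus_{2,\omega}M_n$ with a cp lift lifts to a cp asymptotic homomorphism, notes that nuclearity makes every $\ast$-homomorphism have a cp lift, and finally applies Theorem~\ref{HomLiftTraces}. You instead stay entirely in the MF/hyperlinear world: given $f$, you take $\tau = tr\,f$, use the MF hypothesis with Proposition~\ref{propositionMFtrace} to produce a liftable $g$ with the same trace, and obtain the unitary conjugating $f$ to $g$ directly from Corollary~\ref{unitaryEquivalence} --- verifying the hyperfiniteness hypothesis there via the fact that nuclearity of $B$ forces $\pi_\tau(B)''$ to be injective, rather than via the exactness-plus-amenable-trace route of Corollary~\ref{exact}. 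In effect you inline the MF-analogue of Proposition~\ref{ExactLiftEquivTraces}, which is slightly more self-contained and avoids the detour through quasidiagonal traces; the paper's argument is shorter only because that proposition was already in place. One small point worth spelling out if you want to be fully rigorous: Corollary~\ref{unitaryEquivalence} is stated for separable $A$, and Theorem~\ref{HomLiftTraces}(2) implicitly uses separability (via Corollary~\ref{HomInvGen}), so the reduction to separable $B$ and $A$ you mention parenthetically does need the remark that the homomorphisms $\alpha,\beta$ witnessing domination and the given trace and homomorphism can all be captured inside separable subalgebras.
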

\begin{proof} Since $B$ is nuclear, any trace on it is amenable, and any MF trace on it is quasidiagonal.  Thus all amenable traces on $B$ are quasidiagonal. Since nuclear C*-algebras are exact, by Proposition \ref{ExactLiftEquivTraces} any $\ast$-homomorphism from $B$ to $ \prod M_n/\oplus_{2, \omega} M_n$ that has a cp lift, lifts to a cp asymptotic homomorphism. Since $B$ is nuclear, any $\ast$-homomorphism has  a cp lift. Therefore any $\ast$-homomorphism from $B$ to $ \prod M_n/\oplus_{2, \omega} M_n$ lifts to an asymptotic homomorphism. By Theorem \ref{HomLiftTraces}, any hyperlinear trace on $A$ is MF.
\end{proof}

In fact, the condition imposed on $B$ in Theorems \ref{HomLiftTraces} and \ref{HomLiftTraces2} is satisfied for many C*-algebras. Recall that a C*-algebra $B$ is called {\it Hilbert-Schmidt stable}
if all $\ast$-homomorphisms from $B$ to $\prod M_n/\oplus_{2, \omega} M_n$ lift to $\ast$-homomorphisms to $\prod M_n$. By now lots of C*-algebras are known to be Hilbert-Schmidt stable, e.g. all type I C*-algebras, the C*-algebras of nilpotent groups and many other full group C*-algebras.
Since Hilbert-Schmidt stable C*-algebras satisfy the assumptions of Theorems \ref{HomLiftTraces} and \ref{HomLiftTraces2}, we obtain the following corollary.

\begin{corollary}\label{HilbertSchmidtStable} If $B$ is Hilbert-Schmidt stable (e.g. free product of type I C*-algebras) and $A$ is homotopy dominated by $B$, then all hyperlinear traces on $A$ are MF, and all amenable traces on $A$ are quasidiagonal.
\end{corollary}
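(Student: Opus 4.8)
The plan is to deduce the statement immediately from parts 2) of Theorems \ref{HomLiftTraces} and \ref{HomLiftTraces2}, once we observe that Hilbert-Schmidt stability of $B$ is precisely what verifies the hypotheses of those theorems.

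First I would unwind the definition: $B$ Hilbert-Schmidt stable means every $\ast$-homomorphism $f\colon B\to \prod M_n/\oplus_{2,\omega} M_n$ lifts to a $\ast$-homomorphism $\tilde f\colon B\to \prod M_n$. The elementary point to record is that a $\ast$-homomorphism is, in particular, a cp discrete asymptotic homomorphism: the constant family $(\tilde f)_{\lambda\in\mathbb N}$ satisfies condition (ii) in the definition of a discrete asymptotic homomorphism \emph{exactly} (hence asymptotically), is uniformly bounded, and is completely positive and contractive. Consequently every $\ast$-homomorphism from $B$ to $\prod M_n/\oplus_{2,\omega} M_n$ lifts to a discrete asymptotic homomorphism to $\prod M_n$ (the hypothesis of Theorem \ref{HomLiftTraces}); and since every such $f$ in particular has a cp lift and that lift is again a cp discrete asymptotic homomorphism, the hypothesis of Theorem \ref{HomLiftTraces2} is likewise met.

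Then I would simply invoke the two theorems. Since $A$ is homotopy dominated by $B$, Theorem \ref{HomLiftTraces} 2) yields that all hyperlinear traces on $A$ are MF, and Theorem \ref{HomLiftTraces2} 2) yields that all amenable traces on $A$ are quasidiagonal. For the parenthetical ``e.g.\ type I'' I would only recall, as noted in the paragraph preceding the corollary, that type I C*-algebras (and also C*-algebras of nilpotent groups and many full group C*-algebras) are known to be Hilbert-Schmidt stable, so they are covered by the hypothesis.

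There is essentially no obstacle here; all the substance sits in the earlier homotopy-lifting machinery (Corollaries \ref{HomInvGen} and \ref{cpHomotopyLiftingCor2}) and in Proposition \ref{propositionMFtrace}. The only thing needing a moment's care is the bookkeeping that ``lifts to a $\ast$-homomorphism'' is a special case of ``lifts to a (cp) discrete asymptotic homomorphism,'' together with noting that the separability hypotheses implicit in the cited theorems are in force under the standing convention that $A$ and $B$ are separable.
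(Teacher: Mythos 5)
Your proof is correct and takes exactly the route the paper intends: the paper's own ``proof'' is just the sentence preceding the corollary, which observes that Hilbert--Schmidt stable $B$ satisfies the hypotheses of Theorems \ref{HomLiftTraces} and \ref{HomLiftTraces2}. Your explicit unpacking of why a $\ast$-homomorphism lift is a special case of a cp discrete asymptotic homomorphism lift (as a constant family) is the right bookkeeping and matches the paper's intent.
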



\medskip

\section{An application: Quasidiagonality}

\subsection{A characterization of quasidiagonality}
The following theorem gives a lifting characterization of quasidiagonality.

\begin{theorem}\label{characterizationQD} TFAE:

\begin{itemize}

\item[(i)] $A$ is QD,

\item[(ii)] every $\ast$-homomorphism from $A$ to $B(H)$ lifts to a cp discrete asymptotic homomorphism from $A$ to $\mathcal D$,

\item[(iii)] every $\ast$-homomorphism from $A$ to $B(H)$ asymptotically lifts to a cp discrete asymptotic homomorphism from $A$ to $\mathcal D$.

\item[(iv)] there exists an embedding of $A$ into $B(H)$ that asymptotically lifts to a cp discrete asymptotic homomorphism from $A$ to $\mathcal D$.
\end{itemize}
\end{theorem}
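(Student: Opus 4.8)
The plan is to run the same cycle of implications as in Theorem~\ref{characterizationMF}. The implications (ii)$\Rightarrow$(iii)$\Rightarrow$(iv) are trivial. For (iv)$\Rightarrow$(i) I would repeat essentially verbatim the argument for (iv)$\Rightarrow$(i) in Theorem~\ref{characterizationMF}, with one bookkeeping remark: if $\phi_m\colon A\to\mathcal D$ is a \emph{completely positive} discrete asymptotic homomorphism, then each coordinate map $\phi_{m,k}\colon A\to M_k$ is ccp. Thus, having fixed $m$ and then $k$ large enough that $\phi_{m,k}$ is $\epsilon$-multiplicative and $(\|\cdot\|-2\epsilon)$-isometric on a prescribed finite set $F$ — the isometry estimate coming from $\liminf_m\|q\circ\phi_m(a)\|\ge\|a\|$, which is forced by $\ast$-strong convergence $q\circ\phi_m(a)\to\iota(a)$ together with lower semicontinuity of the norm — the maps $\phi_{m,k}$ are precisely the approximately multiplicative \emph{ccp} matrix maps demanded by the definition of quasidiagonality.

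The real content is (i)$\Rightarrow$(ii). Since QD implies MF, \cite[Prop.~11.1.8]{BO} lets me write $A=\varinjlim A_n$ with each $A_n$ RFD, and Hadwin's Theorem~\ref{RFD} lifts each $f\circ\theta_{n,\infty}\colon A_n\to B(H)$ to an honest $\ast$-homomorphism $\psi_n\colon A_n\to\mathcal D$, which is already a ccp discrete asymptotic homomorphism. The obstruction — exactly the one visible in the proof of Theorem~\ref{characterizationMF} — is that there the $\psi_n$ were assembled into a lift of $f$ on all of $A$ via a set-theoretic section $s\colon A\to A_1$, which is not completely positive. To repair this I would genuinely use quasidiagonality (not merely the MF property): by Voiculescu's quasidiagonality theorem, a faithful essential representation $\pi\colon A\to B(K)$ admits an increasing sequence of finite-rank projections $q_m\nearrow 1_K$ with $\|[q_m,\pi(a)]\|\to 0$; then the tails $a\mapsto (q_m\pi(a)q_m)_{m\ge k}$, $k\in\mathbb N$ — the index-shift device already used in Proposition~\ref{propositionMFtrace} — give, for each $k$, a ccp map into $\mathcal D$ with $\ast$-strong limit $\pi$, and together they form a ccp discrete asymptotic homomorphism lifting $\pi$. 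A general $f$ is then handled by enlarging it to a faithful essential representation, but one must avoid the pitfall that compressing a completely positive asymptotic homomorphism to a corner need not preserve approximate multiplicativity; this forces one to track $f$ through the RFD inductive system — arranging, via Lemma~\ref{ExtendingFromGenerators}(ii) and the freedom in Hadwin's construction, that each honest lift $\psi_n$ lands in a corner of $\mathcal D$ adapted to the quasicentral projections, so that the pieces glue completely positively.

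I expect this last step — producing, for a \emph{general} $\ast$-homomorphism $f\colon A\to B(H)$, a lift $\Psi_k\colon A\to\mathcal D$ that is at once an exact lift ($q\circ\Psi_k=f$), completely positive, and approximately multiplicative, by reconciling the inductive-limit-of-RFD picture of the MF-proof with the quasicentral-projection picture of quasidiagonality — to be the main obstacle; the remaining implications are a routine transcription of Theorem~\ref{characterizationMF}.
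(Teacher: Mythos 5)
Your treatment of (ii)$\Rightarrow$(iii)$\Rightarrow$(iv) and of (iv)$\Rightarrow$(i) is sound and matches the paper: the coordinate maps $\phi_{m,k}$ of a cp discrete asymptotic lift are automatically ccp, and the $\ast$-strong lower semicontinuity of the norm is exactly what converts the asymptotic-lift condition $\|q\circ\phi_m(a)\|\to\|\iota(a)\|$ (norm convergence) into the approximate isometry estimate for a single coordinate $\phi_{m,k}$ on a finite set.

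For (i)$\Rightarrow$(ii) you have correctly diagnosed why the MF-argument cannot be copied verbatim — the section $s\colon A\to A_1$ there is only set-theoretic, so the composites $\phi_n=\psi_n\circ\theta_{1,n}\circ s$ are not cp. But your proposed repair (Voiculescu quasicentral projections for a faithful essential representation, an index-shift to land in $\mathcal D$, and then a reconciliation with Hadwin's construction to handle a general $f$) is both heavier and, as you yourself flag, incomplete: you never actually produce an \emph{exact} cp lift of an arbitrary $\ast$-homomorphism $f\colon A\to B(H)$, and the ``glue completely positively'' step is left as the main obstacle. The paper's fix is much cheaper and avoids all of this: it does not take an arbitrary RFD inductive system for the MF-algebra $A$, but the one manufactured from quasidiagonality itself. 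Namely, let $\gamma_n\colon A\to M_n$ be the ccp approximately multiplicative, approximately isometric maps witnessing QD, set $\Gamma=(\gamma_n)_n\colon A\to\prod M_n$, let $\pi\colon\prod M_n\to\prod M_n/\oplus M_n$ be the quotient, and put $A_1=\pi^{-1}(\pi\Gamma(A))$, $A_m=pr_m(A_1)$. Then $A=\varinjlim A_m$ with each $A_m$ RFD, and the crucial point is that $s:=\Gamma\colon A\to A_1$ is a genuine \emph{ccp} section of $\theta_{1,\infty}$ — it is just the QD cp maps reassembled. With $s$ cp, the same formula $\phi_n(a)=\psi_n(\theta_{1,n}(s(a)))$ as in the MF proof already produces ccp exact lifts, and the rest of that proof goes through unchanged. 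So the missing idea in your proposal is not an extra corner/quasicentrality argument but simply the right \emph{choice} of RFD inductive system so that the section is cp to begin with.
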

\begin{proof} Similar to the proof of Theorem \ref{characterizationMF}.

 (i)$\Rightarrow$ (ii):  Since $A$ is quasidiagonal, there exist cpc maps $\gamma_n: A \to M_n$ which are approximately multiplicative and approximately injective. Let $\pi: \prod M_n \to \prod M_n/\oplus M_n$ be the canonical surjection. Then $$\pi\circ \left(\gamma_n\right)_{n\in \mathbb N}: A \subset \prod M_n/\oplus M_n$$ is an embedding. Let $A_1 = \pi^{-1}(A)$. Let $pr_{ m}: \prod M_n \to \prod_{n\ge m} M_n$ be the projection map and
 $A_m = pr_{m}(A_1)$.   It is well-known and not hard to prove that $A    =\varinjlim A_n$ with $\theta_{1, m} = pr_m|_{A_1}$, $m\in \mathbb N$,  as connecting maps. Then   $s: = \left(\gamma_n\right)_{n\in \mathbb N}: A \to A_1$ is a cp lift of $\theta_{1, \infty}$.

Let $f: A \to B(H)$ be a $\ast$-homomorphism. By Theorem \ref{RFD} $f\circ \theta_{n, \infty}$ lifts to a $\ast$-homomorphism $\psi_n: A_n\to \mathcal D$.  For each $n\in \mathbb N$, define a cp map $\phi_n: A \to \mathcal D$ by
$$\phi_n(a) = \psi_n(\theta_{1, n}(s(a))),$$ $a\in A$. The rest of the proof is the same as in Theorem \ref{characterizationMF}, (i) $\Rightarrow$ (ii).

\medskip

(ii)$\Rightarrow$(iii) and  (iii)$\Rightarrow$(iv) are obvious.

\medskip

(iv)$\Rightarrow$(i): Same as the proof of Theorem \ref{characterizationMF}, (iv) $\Rightarrow$ (i).
\end{proof}

\begin{corollary}\label{QDhominv} (Voiculescu \cite{Voiculescu}) Suppose $A$ is homotopically dominated by $B$, and $B$ is quasidiagonal. Then $A$ is also quasidiagonal.
\end{corollary}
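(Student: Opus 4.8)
The plan is to derive Corollary \ref{QDhominv} as an immediate consequence of the lifting characterization of quasidiagonality in Theorem \ref{characterizationQD} together with the general homotopy-invariance mechanism already set up in Corollary \ref{HomInvGen} (or rather its cp refinement). First I would unwind the hypothesis: since $A$ is homotopically dominated by $B$, there are $\ast$-homomorphisms $\alpha\colon B\to A$ and $\beta\colon A\to B$ with $\alpha\circ\beta$ homotopic to $\mathrm{id}_A$. Because $B$ is quasidiagonal, Theorem \ref{characterizationQD} (i)$\Rightarrow$(ii) says that \emph{every} $\ast$-homomorphism from $B$ to $B(H)$ lifts to a cp discrete asymptotic homomorphism into $\mathcal{D}$.

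Next I would run the same argument as in Corollary \ref{HomInvGen}, but keeping track of complete positivity. Let $f\colon A\to B(H)$ be an arbitrary $\ast$-homomorphism. Then $f\circ\alpha\colon B\to B(H)$ lifts to a cp discrete asymptotic homomorphism $\gamma_\lambda\colon B\to\mathcal{D}$; composing with $\beta$ gives that $f\circ\alpha\circ\beta$ lifts to the cp discrete asymptotic homomorphism $\gamma_\lambda\circ\beta\colon A\to\mathcal{D}$ (composition with a $\ast$-homomorphism on the right preserves both the cp property and the asymptotic-multiplicativity estimates). Since $f$ is homotopic to $f\circ\alpha\circ\beta$, one invokes the cp homotopy lifting result — here the relevant statement is the cp analogue: $f$ has a cp lift (namely $f$ itself, a $\ast$-homomorphism lands in $\mathcal D$? no — one uses that $B(H)$-valued $\ast$-homomorphisms into the quotient $B(H)=\mathcal D/(\text{kernel})$ are genuine maps, so one should instead apply Corollary \ref{cpHomotopyLiftingCor1} with the roles arranged so that $\phi$ is the quotient map structure). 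Concretely, view $\mathcal D\to B(H)$ as $D/I$ with $D=\mathcal D$, $I=\ker q$; then $f\colon A\to D/I$ and one wants a cp lift of $f$ to conclude. Since $A$ is homotopy dominated by $B$ and every $\ast$-homomorphism from $B$ to $D/I$ that has a cp lift lifts to a cp discrete asymptotic homomorphism (which is what quasidiagonality of $B$ gives, noting that every $\ast$-homomorphism into $B(H)$ from a quasidiagonal algebra is in particular cp-liftable once composed appropriately), Corollary \ref{cpHomotopyLiftingCor2} applies directly and yields that $f$ lifts to a cp discrete asymptotic homomorphism from $A$ to $\mathcal D$.

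Finally I would close the loop: since \emph{every} $\ast$-homomorphism from $A$ to $B(H)$ — in particular a faithful one — lifts to a cp discrete asymptotic homomorphism into $\mathcal D$, the implication (iv)$\Rightarrow$(i) of Theorem \ref{characterizationQD} shows $A$ is quasidiagonal. Thus the proof is simply: \textit{it follows from the equivalence (i)$\Leftrightarrow$(ii) in Theorem \ref{characterizationQD} together with Corollary \ref{cpHomotopyLiftingCor2}} (exactly parallel to the one-line proof of Theorem \ref{MFhominv} from Theorem \ref{characterizationMF} and Corollary \ref{HomInvGen}).

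The main obstacle, such as it is, is bookkeeping rather than mathematics: one must check that the cp homotopy lifting corollary (Corollary \ref{cpHomotopyLiftingCor2}) applies in this setting, i.e.\ that the hypothesis ``every $\ast$-homomorphism from $B$ to $D/I$ that has a cp lift lifts to a cp discrete asymptotic homomorphism'' is genuinely supplied by quasidiagonality of $B$ via Theorem \ref{characterizationQD}(ii) — here one uses that a $\ast$-homomorphism $B\to B(H)$ automatically ``has a cp lift'' issue is vacuous because $B(H)$ itself is the codomain after we model it as $\mathcal D/I$; the real content, that it lifts all the way to a cp \emph{asymptotic} homomorphism, is precisely (ii). Everything else is formal composition, and no delicate estimate beyond those already established is needed.
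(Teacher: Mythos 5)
Your overall route is exactly the paper's: reduce to the equivalence (i)$\Leftrightarrow$(ii) of Theorem \ref{characterizationQD} and feed it through the cp homotopy-lifting mechanism of Corollary \ref{cpHomotopyLiftingCor2}, modelling $B(H)$ as $\mathcal D/I$ with $I=\ker q$. However, there is one concrete point that you flag but never actually settle, and it is not vacuous. Corollary \ref{cpHomotopyLiftingCor2} has ``has a cp lift'' appearing on \emph{both} sides: its hypothesis concerns $\ast$-homomorphisms $B\to D/I$ with a cp lift, and its conclusion only covers $\ast$-homomorphisms $A\to D/I$ that themselves have a cp lift. For the hypothesis you correctly note that Theorem \ref{characterizationQD}(ii) supplies something even stronger. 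But for the conclusion to give quasidiagonality of $A$ via (ii)$\Rightarrow$(i) (or (iv)$\Rightarrow$(i)), you must still know that an arbitrary $\ast$-homomorphism $f\colon A\to B(H)$ possesses a cp lift to $\mathcal D$. You briefly try ``$f$ itself'', see it fails, and then declare the issue ``vacuous because $B(H)$ itself is the codomain''; that is not a reason.

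The missing observation is short and explicit, and it is precisely what the paper states: every $\ast$-homomorphism $f$ from any C*-algebra to $B(H)$ has a cp lift to $\mathcal D$, namely the compressions $\bigl(P_n f(\cdot) P_n\bigr)_{n\in\mathbb N}$. Each $P_n f(\cdot) P_n\colon A\to M_n$ is cp, the sequence $\bigl(P_n f(a) P_n\bigr)_n$ converges $\ast$-strongly to $f(a)$ (since $P_n\to 1$ $\ast$-strongly), so it lies in $\mathcal D$ and maps to $f(a)$ under $q$. With this one-line fact inserted where you currently wave your hands, the rest of your argument is exactly the paper's proof and is correct.
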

\begin{proof} Every $\ast$-homomorphism, say $f$,  from any C*-algebra to $B(H)$ has a cp lift to $\mathcal D$, namely $\left(P_nfP_n\right)_{n\in \mathbb N}$. The statement follows now from Theorem \ref{cpHomotopyLiftingCor2} and the equivalence (i)$ \Leftrightarrow$ (ii) in Theorem \ref{characterizationQD}.
\end{proof}

\subsection{$qA$ is quasidiagonal}

For a C*-algebra $A$ let $i_1: A \to A\ast A$ and $i_2: A \to A\ast A$ be the canonical embeddings.
In \cite{Cuntz} Cuntz defined $qA$ as the ideal in $A\ast A$ generated by the set $\{i_1(a) - i_2(a)\;|\; a\in A\}$. He proved that for any $B$,  the set $[qA, B\otimes K]$ of homotopy classes of $\ast$-homomorphisms from $qA$ to $B\otimes K$ is always a group with multiplication given by the direct sum,  and  $KK(A, B) \cong [qA, B\otimes K]$.

\noindent  We will see now that $qA$ is always quasidiagonal, regardless of $A$.

\begin{theorem}\label{qA} For any C*-algebra $A$, the C*-algebra $qA$ is quasidiagonal.
\end{theorem}
\begin{proof} Since quasidiagonality is a local property, it is sufficient to prove the statement for a separable $A$. Let $\rho: qA\to B(H)$ be a faithful representation. By the proof of \cite[Th. 5.1.6]{JensenThomsen} there exists a representation $\lambda: qA\to B(H)$ such that $\rho\oplus \lambda$ is homotopic to zero.

The identity map $id_{B(H)}$ admits a cp lift to $\mathcal D$, namely the map that sends each operator $T$ to the sequence $(P_nTP_n)_{n\in \mathbb N}$ of its corners. Therefore we can apply Theorem \ref{cpHomotopyLiftingCor1} to $B= B(H)$, $\phi= id_{B(H)}$, $D/I= B(H)$, $D=\mathcal D$, $\psi' = \rho\oplus \lambda$, $\psi'' = 0.$ By Theorem \ref{cpHomotopyLiftingCor1} $\rho\oplus \lambda$ lifts to a discrete asymptotic homomorphism $qA\to M_2(\mathcal D)$. By Theorem \ref{characterizationQD}, namely by the implication (iv) $\Rightarrow$ (i) applied to $M_2(\mathcal D)$ instead of $\mathcal D$, $qA$ is quasidiagonal.
\end{proof}

\medskip

\section{Liftings of asymptotic homomorphisms}

 In general we use the parameter $\lambda$ for asymptotic homomorphisms to  emphasize that they can be both discrete and continuous. In this section we use it  to distinguish the parameter from the variable $t$ used for functions on $[0, 1]$. We note that in Theorem \ref{DiscreteHomotopyLiftingAsHom} below the parameter has to be continuous.

\medskip

\medskip

\begin{lemma}\label{factorizationAsHom} Let $\phi_{\lambda}: B \to A$, $\lambda\in \Lambda$, be an asymptotic homomorphism and $\psi: B \to A$ be a $\ast$-homomorphism that is homotopic to $\phi_{\lambda}, \lambda\in \Lambda$. Then $\phi_{\lambda}$ factorizes through $Z_{\psi}$,

$$ \begin{tikzcd}  B \arrow{r}{\alpha_{\lambda}}\arrow[bend right=30,  swap]{rr}{\phi_{\lambda}}& Z_{\psi}\arrow{r}{\beta}& A \end{tikzcd}$$
meaning that there exist  an asymptotic homomorphism $\alpha_{\lambda}: B \to Z_{\psi}$, $\lambda\in \Lambda$,  and  a $\ast$-homomorphism $\beta: Z_{\psi}\to A$ such that $\phi_{\lambda} = \beta\circ \alpha_{\lambda}$.
Moreover $\beta|_B = \psi$.
\end{lemma}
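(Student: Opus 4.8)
\textbf{Proof plan for Lemma \ref{factorizationAsHom}.}

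The plan is to imitate the proof of Lemma \ref{factorization}, replacing the $\ast$-homomorphism $\phi$ by the asymptotic homomorphism $\phi_\lambda$ and checking that the construction still produces an asymptotic homomorphism into $Z_\psi$. First I would fix a homotopy between $\phi_\lambda$ and $\psi$: this is an asymptotic homomorphism $\Phi_\lambda : B \to A \otimes C[0,1]$ with $\mathrm{ev}_1 \circ \Phi_\lambda = \phi_\lambda$ and $\mathrm{ev}_0 \circ \Phi_\lambda = \psi$ for every $\lambda \in \Lambda$ (recall from the Preliminaries that two asymptotic homomorphisms are homotopy equivalent when such a $\Phi_\lambda$ exists; here one of the endpoints is the $\ast$-homomorphism $\psi$, so $\mathrm{ev}_0 \circ \Phi_\lambda$ is literally equal to $\psi$, not merely asymptotically). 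Then, exactly as in Lemma \ref{factorization}, define $\beta : Z_\psi \to A$ by $\beta((\xi,b)) = \xi(1)$, which is an honest $\ast$-homomorphism with $\beta|_B = \psi$, and define $\alpha_\lambda : B \to Z_\psi$ by $\alpha_\lambda(b) = (\Phi_\lambda(b), b)$. The pair $(\Phi_\lambda(b), b)$ does lie in $Z_\psi$ because $\Phi_\lambda(b)(0) = (\mathrm{ev}_0 \circ \Phi_\lambda)(b) = \psi(b)$, so the compatibility condition defining $Z_\psi$ holds on the nose for each $\lambda$.

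Next I would verify that $\alpha_\lambda$ is an asymptotic homomorphism $B \to Z_\psi$. Continuity in $\lambda$ (in the continuous case) is inherited from that of $\Phi_\lambda$ and of the identity map on $B$. For the asymptotic multiplicativity, linearity, and $\ast$-preservation: the norm on $Z_\psi \subset (C[0,1]\otimes A) \oplus B$ is the maximum of the two coordinate norms, so for instance
\[
\|\alpha_\lambda(b_1 b_2) - \alpha_\lambda(b_1)\alpha_\lambda(b_2)\|
= \max\bigl\{ \|\Phi_\lambda(b_1 b_2) - \Phi_\lambda(b_1)\Phi_\lambda(b_2)\|,\ \|b_1 b_2 - b_1 b_2\| \bigr\}
= \|\Phi_\lambda(b_1 b_2) - \Phi_\lambda(b_1)\Phi_\lambda(b_2)\|,
\]
which tends to $0$ as $\lambda \to \infty$ because $\Phi_\lambda$ is an asymptotic homomorphism; the $B$-coordinate contributes nothing since $b \mapsto b$ is an exact $\ast$-homomorphism. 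The same coordinatewise estimate handles adjoints and linear combinations, and in the discrete case the uniform boundedness $\sup_\lambda \|\alpha_\lambda(b)\| < \infty$ follows from $\sup_\lambda \|\Phi_\lambda(b)\| < \infty$.

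Finally I would check $\phi_\lambda = \beta \circ \alpha_\lambda$: for $b \in B$ one has $\beta(\alpha_\lambda(b)) = \beta((\Phi_\lambda(b), b)) = \Phi_\lambda(b)(1) = (\mathrm{ev}_1 \circ \Phi_\lambda)(b) = \phi_\lambda(b)$. Combined with $\beta|_B = \psi$ computed exactly as in Lemma \ref{factorization} (using the embedding $b \mapsto (1 \otimes \psi(b), b)$ and $\beta((1\otimes\psi(b),b)) = \psi(b)$), this completes the proof. I do not anticipate a genuine obstacle here; the only point requiring care is the observation that because $\psi$ is a bona fide $\ast$-homomorphism the homotopy $\Phi_\lambda$ can be taken to satisfy $\mathrm{ev}_0 \circ \Phi_\lambda = \psi$ exactly (so that $\alpha_\lambda$ really maps into $Z_\psi$ and not just approximately), and the routine but slightly tedious verification that the coordinatewise maximum norm on $Z_\psi$ lets every asymptotic-homomorphism defect of $\alpha_\lambda$ be read off directly from that of $\Phi_\lambda$.
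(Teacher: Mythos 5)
Your proposal is correct and follows essentially the same route as the paper: define $\alpha_\lambda(b) = (\Phi_\lambda(b), b)$ and $\beta((\xi,b)) = \xi(1)$, then check $\phi_\lambda = \beta \circ \alpha_\lambda$ and $\beta|_B = \psi$. You additionally spell out the coordinatewise verification that $\alpha_\lambda$ is an asymptotic homomorphism (via the max norm on $Z_\psi$), which the paper leaves implicit but is exactly the right observation.
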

\begin{proof} Similar to the proof of Lemma \ref{factorization}. Let $\Phi_{\lambda}: B\to A\otimes C[0, 1]$ be a homotopy between $\phi_{\lambda}$ and $\psi$ with $$ev_1\circ\Phi_{\lambda} = \phi_{\lambda}, \;ev_0\circ\Phi_{\lambda}=\psi.$$ Then we can define $\alpha_{\lambda}: B \to Z_{\psi}$ by
$$\alpha_{\lambda}(b) = (\Phi_{\lambda}(b), b),$$ for any $b\in B$.
Define $\beta: Z_{\psi}\to A$ by
$$\beta((\xi, b)) = \xi(1),$$ for any $(\xi, b)\in Z_{\psi}$. Then $\beta\circ \alpha_{\lambda} = \phi_{\lambda}$, $\lambda\in \Lambda$.
As usual we consider $B$ as a C*-subalgebra of $Z_{\psi}$ via the embedding $b\mapsto (1\otimes \psi(b), b)$. We have
$$\beta|_B(b) = \beta((1\otimes \psi(b), b))= \psi(b).$$
\end{proof}

\begin{theorem}\label{DiscreteHomotopyLiftingAsHom} Let $\phi_{\lambda}: B \to A$, $\lambda\in [1, \infty)$, be an asymptotic homomorphism and $\psi: B \to A$ be a $\ast$-homomorphism that is homotopic to $\phi_{\lambda}, \lambda\in [1, \infty)$. Suppose $\psi$ lifts  to an asymptotic homomorphism. Then $\phi_{\lambda}$, $\lambda\in [1, \infty)$, lifts  to an asymptotic homomorphism.
 Moreover the whole homotopy lifts.
\end{theorem}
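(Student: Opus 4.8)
The plan is to run the scheme of Theorem~\ref{DiscreteHomotopyLifting}, with Lemma~\ref{factorizationAsHom} in place of Lemma~\ref{factorization}. As before $A = D/I$ for a surjection $q\colon D \to D/I$, and ``lifts'' is understood through $q$. By Lemma~\ref{factorizationAsHom} we obtain an asymptotic homomorphism $\alpha_\lambda\colon B \to Z_\psi$ and a $\ast$-homomorphism $\beta\colon Z_\psi \to A$ with $\beta|_B = \psi$ and $\phi_\lambda = \beta\circ\alpha_\lambda$ for all $\lambda \in [1,\infty)$. Since $\beta|_B = \psi$ lifts to an asymptotic homomorphism by hypothesis, Theorem~\ref{cylinder} provides an asymptotic homomorphism $\gamma_\mu\colon Z_\psi \to D$, $\mu\in[1,\infty)$, with $q\circ\gamma_\mu = \beta$ for every $\mu$; moreover, as this lift is manufactured through Lemma~\ref{ExtendingFromGenerators} out of bounded data on a fixed finite set of generators, I would take $\gamma_\mu$ to be equicontinuous.

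The remaining and essential step is to compose $\gamma$ with $\alpha$. For any continuous nondecreasing $r\colon[1,\infty)\to[1,\infty)$ with $r(\lambda)\to\infty$ set $\Psi_\lambda := \gamma_{r(\lambda)}\circ\alpha_\lambda\colon B \to D$. Since $q\circ\gamma_\mu = \beta$ for every $\mu$, we have $q\circ\Psi_\lambda = \beta\circ\alpha_\lambda = \phi_\lambda$ for every $\lambda$ and every choice of $r$, so $\Psi_\lambda$ is an honest lift of $\phi_\lambda$ (not merely an asymptotic lift); the only thing to arrange is that $(\Psi_\lambda)$ be an asymptotic homomorphism, which holds once $r$ grows fast enough. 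Concretely: fix finite subsets $F_1\subseteq F_2\subseteq\cdots$ of $B$ with dense union; choose $\lambda_1<\lambda_2<\cdots\to\infty$ so that for $\lambda\ge\lambda_n$ the map $\alpha_\lambda$ is so nearly multiplicative, linear and selfadjoint on $F_n$ that the equicontinuity modulus of $\gamma$ turns these defects into something $\le 1/n$; then, for the now fixed finite sets $K_n = \{\alpha_{\lambda_n}(b),\,\alpha_{\lambda_n}(b)\alpha_{\lambda_n}(c) : b,c\in F_n\}\subset Z_\psi$, choose $\mu_1<\mu_2<\cdots\to\infty$ with $\gamma_\mu$ sufficiently multiplicative on $K_n$ for $\mu\ge\mu_n$, and let $r$ be continuous and nondecreasing with $r(\lambda_n)=\mu_n$. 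Continuity of all the data in $\lambda$ lets one interpolate the estimates over each interval $[\lambda_n,\lambda_{n+1}]$ — this is exactly where the parameter must be continuous — and the equicontinuity of $\gamma_\mu$ is what makes the first half of the estimate, namely replacing $\alpha_\lambda(b)\alpha_\lambda(c)$ by $\alpha_\lambda(bc)$ inside $\gamma_{r(\lambda)}$, uniform in $\mu$. The estimates for sums and adjoints are entirely analogous.

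For the ``moreover'' I would mimic the end of the proof of Theorem~\ref{DiscreteHomotopyLifting}. If $\Phi_\lambda\colon B\to A\otimes C[0,1]$ is the given homotopy with $ev_1\circ\Phi_\lambda=\phi_\lambda$ and $ev_0\circ\Phi_\lambda=\psi$, define for $b\in B$, $s\in[0,1]$ the element $\Gamma_{b,s}^\lambda(t)=\Phi_\lambda(b)(st)$; since $\Gamma_{b,s}^\lambda(0)=\psi(b)$, the pair $(\Gamma_{b,s}^\lambda,b)$ lies in $Z_\psi$, and $\Theta_\lambda(b)(s):=(\Gamma_{b,s}^\lambda,b)$ defines an asymptotic homomorphism $\Theta_\lambda\colon B\to Z_\psi\otimes C[0,1]$ with $(\beta\otimes\mathrm{id}_{C[0,1]})\circ\Theta_\lambda = \Phi_\lambda$. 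Composing with $\gamma_{r(\lambda)}\otimes\mathrm{id}_{C[0,1]}$ and enlarging $r$ if necessary (the same reparametrization argument as above, now for $Z_\psi\otimes C[0,1]$), one gets an asymptotic homomorphism $B\to D\otimes C[0,1]$ whose $q$-image is $\Phi_\lambda$ and which restricts at the endpoints to $\Psi_\lambda$ and to a lift of $\psi$; hence the whole homotopy lifts.

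The step I expect to be the main obstacle is the composition $\gamma\circ\alpha$: in Theorem~\ref{DiscreteHomotopyLifting} the factor $\alpha$ is a genuine $\ast$-homomorphism, so $\gamma_\lambda\circ\alpha$ is at once an asymptotic homomorphism, whereas here both factors are only asymptotic, and keeping their composite asymptotic is what forces the reparametrization together with the two hypotheses it rests on — continuity of the parameter (for the interpolation) and equicontinuity of the lift $\gamma$ obtained from Theorem~\ref{cylinder}.
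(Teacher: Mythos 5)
Your proposal follows the paper's proof essentially verbatim: factor $\phi_\lambda = \beta\circ\alpha_\lambda$ via Lemma~\ref{factorizationAsHom}, lift $\beta$ to an asymptotic homomorphism $\gamma_\mu$ by Theorem~\ref{cylinder}, reparametrize so that $\gamma_{r(\lambda)}\circ\alpha_\lambda$ is an asymptotic homomorphism (an exact lift of $\phi_\lambda$ since $q\circ\gamma_\mu=\beta$ for all $\mu$), and lift the homotopy via the same $\Theta_\lambda$ construction. The only difference is that you spell out the standard Connes--Higson reparametrization argument (including the role of equicontinuity of $\gamma_\mu$), which the paper invokes without proof.
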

\begin{proof} Similar to the proof of Theorem \ref{DiscreteHomotopyLifting}. To make the idea easier to understand, we at first prove the first statement and then the second one, although the first one follows from the second one.

We use the notation and constructions of Lemma \ref{factorizationAsHom}. By Lemma \ref{factorizationAsHom} $\phi_{\lambda} = \beta\circ \alpha_{\lambda}$, $\lambda\in \Lambda$,  with $\beta: Z_{\psi}\to D/I$ such that $\beta|_B = \psi$. Since $\psi$ lifts  to an asymptotic homomorphism, by Theorem \ref{cylinder}   $\beta$ lifts to an asymptotic homomorphism $\gamma_{\lambda}$, $\lambda\in \Lambda$. The composition $\gamma_{\lambda}\circ \alpha_{\lambda}$ might not be an asymptotic homomorphism, but
 there is a reparametrization  $r(\lambda)$ such that $\gamma_{r(\lambda)}\circ \alpha_{\lambda}$ is an asymptotic homomorphism. This is a lift of $\phi_{\lambda}$, $\lambda\in \Lambda$, since
  $$q\circ \gamma_{r(\lambda)}\circ \alpha_{\lambda} = \beta\circ \alpha_{\lambda} = \phi_{\lambda}. $$

We now show that the whole homotopy $\Phi_{\lambda}$, $\lambda\in \Lambda$, between $\phi_{\lambda}$, $\lambda\in \Lambda$, and $\psi$ lifts.  For each $\lambda\in \Lambda$, $0\le s\le 1$ and $b\in B$ we define $\Gamma_{\lambda, b, s}\in A\otimes C[0,1]$ by $$\Gamma_{\lambda, b, s}(t) = \Phi_{\lambda}(b)(st),$$ $t\in [0, 1]$. Since $\Gamma_{\lambda, b, s}(0) = \Phi_{\lambda}(b)(0) = \psi(b)$, $(\Gamma_{\lambda, b, s}, b) \in Z_{\psi}$. Since for each $\lambda\in \Lambda$ and $ b\in B$, $
 \Phi_{\lambda}(b)$ is continuous $A$-valued function on $[0, 1]$, the assignment $s\mapsto \Gamma_{\lambda, b, s}$ is continuous. Therefore we can define a map $\Theta_{\lambda}: B \to Z_{\psi}\otimes C[0, 1]$ by
$$\Theta_{\lambda}(b)(s) = (\Gamma_{\lambda, b, s}, b).$$ It is an asymptotic homomorphism. Indeed, for $b_1, b_2 \in B$ we have
\begin{multline*}\|\Theta_{\lambda}(b_1)\Theta_{\lambda}(b_2)\| =   \sup_{s\in [0, 1]} \|\left(\Gamma_{\lambda, b_1, s}, b_1\right)\left(\Gamma_{\lambda, b_2, s}, b_2\right) - \left(\Gamma_{\lambda, b_1b_2, s}, b_1b_2\right)\| \\=
\|b_1b_2\|\sup_{s\in [0, 1]}\|\Gamma_{\lambda, b_1, s}\Gamma_{\lambda, b_2, s}-\Gamma_{\lambda, b_1b_2, s}\| \\= \|b_1b_2\|\sup_{s\in [0, 1]}
\sup_{t\in [0, 1]} \|\Phi_{\lambda}(b_1)(st)\Phi_{\lambda}(b_2)(st)-\Phi_{\lambda}(b_1b_2)(st)\| \\
\le \|b_1b_2\| \|\Phi_{\lambda}(b_1)\Phi_{\lambda}(b_2)-\Phi_{\lambda}(b_1b_2)\| \to_{\lambda \to \infty} 0, \end{multline*}
asymptotic linearity and asymptotic self-adjointness can be checked similarly, and for each $\lambda_1, \lambda_2 \in \Lambda$ and $b\in B$ we have
\begin{multline*}\|\Theta_{\lambda_1}(b) - \Theta_{\lambda_2}(b)\| =\|b\| \sup_{s\in [0, 1]} \|\Gamma_{\lambda_1, b, s} - \Gamma_{\lambda_2, b, s}\|\\ = \|b\| \sup_{s\in [0, 1]} \sup_{t\in [0, 1]} \|\Phi_{\lambda_1}(b)(st) - \Phi_{\lambda_2}(b)(st)\| \le \|b\|\|\Phi_{\lambda_1}(b) - \Phi_{\lambda_2}(b)\| \end{multline*}
that implies that the function $\lambda \mapsto \Theta_{\lambda}(b)$ is continuous.

We have  $\Phi_\lambda = (\beta\otimes id_{C[0, 1]})\circ \Theta_{\lambda},$ $\lambda\in \Lambda$.  There exists a reparametrization $r(\lambda)$ such that $(\gamma_{r(\lambda)}\otimes id_{C[0, 1]})\circ \Theta_{\lambda}: B\to D\otimes C[0, 1]$, $\lambda\in \Lambda$,  is an asymptotic homomorphism.   This is a homotopy that lifts $\Phi_{\lambda}$.
\end{proof}

\begin{remark} Same result holds if we replace lifting by asymptotic lifting.
\end{remark}

\begin{remark}\label{noDiscreteVersion} We do not have a version of Theorem \ref{DiscreteHomotopyLiftingAsHom} for discrete asymptotic homomorphisms. The same proof would not work in the discrete case, because the composition of discrete homomorphisms need  not be an asymptotic homomorphism even after reparametrization of one of them.
\end{remark}

\medskip



\section{An application: Homotopy symmetric C*-algebras are MF}

Homotopy symmetric C*-algebras were introduced by Dadarlat and Loring in \cite{DadarlatLoring}. Let $[[A, B]]$  denote the set of homotopy classes of asymptotic homomorphisms from $A$ to $B$. When $B$ is stable, i.e. $B\cong B\otimes K$, this set is a semigroup with addition given by the direct sum.  A C*-algebra is {\it homotopy symmetric} if $id_A$ has an inverse in $[[A, A\otimes K]]$. (More precisely, the class of the map $a\mapsto a\otimes e_{11}$ has an inverse).

A remarkable result of Dadarlat and Loring states that homotopy symmetric C*-algebras are precisely those  C*-algebras for which E-theory can be unsuspended: for any $B$, $E(A, B) = [[A, B\otimes K]]$ if and only if $A$ is homotopy symmetric. Recall that E-theory was introduced by Connes and Higson in \cite{ConnesHigson}  and is defined as $E(A, B) = [[SA, SB\otimes K]]$.

In \cite{DadarlatPennig} Dadarlat and Pennig proved that nuclear homotopy symmetric C*-algebras are quasidiagonal.  We prove now that every homotopy symmetric C*-algebra is MF. Since for nuclear C*-algebras the MF property is equivalent to quasidiagonality, this covers the aforementioned result of Dadarlat and Pennig.

Let us say that an asymptotic homomorphism $\phi_t, t\in [0, \infty),$ is {\it injective} if $\limsup \|\phi_t(a)\|\ge \|a\|$, for any $a\in A$.

Let $\mathcal D$ be the same  C*-algebra as in Theorem \ref{characterizationMF}.

By arguments similar to those used in the proof of the implication (iv)$\Longrightarrow$ (i) in Theorem \ref{characterizationMF},  we obtain the following proposition.

\begin{proposition}\label{extraMF} Let $A$ be a separable C*-algebra and suppose there exists an injective asymptotic homomorphism from $A$ to $B(H)$ that asymptotically lifts to an asymptotic homomorphism to $\mathcal D$. Then $A$ is MF.
\end{proposition}
\begin{proof}
Let $F\subset \subset A$, $\epsilon >0$. Let $\phi_t: A \to B(H)$, $t\in [0, \infty)$,  be an injective asymptotic homomorphism that asymptotically lifts to an asymptotic homomorphism $\psi_t: A \to \mathcal D$, $t\in [0, \infty)$. Then there is $t_0$ such that for any $t>t_0$
\begin{equation}\label{characterizationMF1}\|\psi_t(ab) - \psi_t(a)\psi_t(b)\|\le \epsilon,\end{equation} when $a, b\in F$, and similarly for sums and adjoint element. Since for each $a\in A$, $\|q\circ\psi_t(a)- \phi_t(a)\|\to 0$,  there is $t_1> t_0$ such that for any $t>t_1$
\begin{equation} \|q\circ \psi_t(a)\|\ge \|\phi_t(a)\|-\epsilon.\end{equation}
By the injectivity of $\phi_t$, $t\in [0, \infty)$, there is $t_2> t_1$ such that
\begin{equation} \|\phi_{t_2}(a)\|\ge \|a\|-\epsilon, \end{equation}
for any $a\in F$.  Then
\begin{equation}\label{norm} \|\psi_{t_2}(a)\|\ge \|q\circ\psi_{t_2}(a)\|\ge \|\phi_{t_2}(a)\|-\epsilon \ge \|a\|-2\epsilon,
\end{equation}
for any $a\in F$.  Write $\psi_{t_2} = \left(\psi_{t_2, k}\right)_{k\in \mathbb N},$ where $\psi_{t_2, k}: A \to M_k$. It follows from (\ref{norm}) that there is $k$ such that
\begin{equation}\label{characterizationMF3}\|\psi_{t_2, k}(a)\|>\|a\|-3\epsilon,\end{equation} $a\in F$. It follows from (\ref{characterizationMF1}) that
\begin{equation}\label{characterizationMF4}\|\psi_{t_2,k}(ab) - \psi_{t_2, k}(a)\psi_{t_2, k}(b)\|\le \epsilon,\end{equation}
$a, b \in F$.
By (\ref{characterizationMF3}) and (\ref{characterizationMF4}), $A$ is MF.
\end{proof}

\begin{theorem}\label{homsymm} Let $A$ be a homotopy symmetric C*-algebra. Then $A$ is MF.
\end{theorem}
\begin{proof} Since $A$ is homotopy symmetric, there is an asymptotic homomorphism $\gamma_t$, $t\in [0, \infty)$, such that $id_A \oplus \gamma_t$ is homotopic to zero. Embedding $A\otimes K\subset B(H)$ and considering $\gamma_t$, $t\in [0, \infty)$, as an asymptotic homomorphism to $B(H)$, we obtain an injective asymptotic homomorphism $\phi_t:= id_A\oplus \gamma_t: A \to B(H)$, $t\in [0, \infty)$, that is homotopic to zero.

Let $A_0$ be a separable C*-subalgebra of $A$. Then $\phi_t|_{A_0}$, $t\in [0, \infty)$, is an injective asymptotic homomorphism that is homotopic to zero. Since zero homomorphism lifts to the zero homomorphism from $A_0$ to $D$, by Theorem \ref{DiscreteHomotopyLiftingAsHom}, $\phi_t|_{A_0}$, $t\in [0, \infty)$, also lifts to an asymptotic homomorphism to $\mathcal D$. By Proposition  \ref{extraMF}, $A_0$ is MF. We have proved that any separable C*-subalgebra of $A$ is MF. Since MF is a local property, this implies that $A$ is MF.
\end{proof}

 \section{An application: Extension groups}

 In this section a C*-algebra $B$ is always assumed to be stable, that is $B\cong B\otimes K$.
 We will use notation $Q(B)$ for $M(B)/B$.

 An extension of $A$ by $B$ can be described by its Busby invariant, a homomorphism from $A$ to $Q(B)$.

 The following information is taken from  the paper \cite{Manuilov} that contains an excellent introduction to the topic of classification of extensions.

 For classification of extensions, one must decide  what extensions should be considered as trivial, and
what should equivalence relation mean. There are various choices for both questions.  An extension $\lambda: A \to Q(B)$ should be considered as “trivial” if it is

 \medskip

(0) a zero extension, i.e., if $\lambda = 0$;

(1) a split extension, i.e., if $\lambda$ admits a $\ast$-homomorphism as a lifting. This means that there exists
a $\ast$-homomorphism $f: A \to M(B)$ such that $q\circ f = \lambda$;

(2) an asymptotically split extension, i.e., if $\lambda$ admits an asymptotic homomorphism as a lifting. This
means that there exists an asymptotic homomorphism $(f_t)_{t\in [0, \infty)}: A \to M(B)$ such that $q\circ  f_t = \lambda$
for every $t$;

(3) a discretely asymptotically split extension, i.e., if $\lambda$ admits a discrete asymptotic homomorphism
$(f_n)_{n\in \mathbb N}$ as a lifting.
This is almost the same as the previous one, but the parameter is integer and
there is no relation between $f_n$ and $f_{n+1}$.

\medskip

There are also several notions of equivalence for extensions. Let $\tau_0, \tau_1: A \to Q(B)$.

\medskip

(a) Unitary equivalence: $\tau_0 \approx \tau_1$ if there exists a unitary element $U\in M(B)$ such that $q(U)^* \tau_0q(U) = \tau_1$.

(b) Approximate unitary equivalence: $\tau_0 \approx \tau_1$ if there exists a sequence $U_n\in M(B)$ of unitary elements
such that for any $a\in A$, one has $$\lim_{n\to \infty}  \|q(U_n)^* \tau_0(a) q(U_n) - \tau_1(a)\| = 0$$  (another version of approximate
unitary equivalence requires continuous families of unitary elements instead of sequences: $$\lim_{t\to \infty}  \|q(U_t)^* \tau_0(a) q(U_t) - \tau_1(a)\| = 0,$$ where $U_t\in M(B)$, $t\in [1, \infty)$,  is a continuous path of unitaries).

(c) Homotopy equivalence: $\tau_0 \approx \tau_1$ if there exists a $\ast$-homomorphism $T : A \to Q(B)[0, 1]$ such that
$ev_i \circ T = \tau_i$ for $i = 0, 1$.

(d) Weak homotopy equivalence: $\tau_0 \approx \tau_1$ if there exists a $\ast$-homomorphism $T: A \to Q(B[0, 1])$ such that
$ev_i \circ T = \tau_i$ for $i = 0, 1$ (although the C*-algebra $Q(B[0, 1])$ is larger than $Q(B)[0, 1]$, the evaluation
mappings are still well defined).

\medskip

Two extensions $\tau_0, \tau_1: A \to Q(B)$ are called {\it stably equivalent} if there exist two "trivial" extensions $\lambda_0, \lambda_1: A \to Q(B)$ such that $\tau_0\oplus \lambda_) \approx \tau_1\oplus \lambda_1$.

 One denotes by $Ext(A, B)$ the set of classes of stably equivalent extensions of an algebra $A$ by an algebra $B$ and indicates by subscripts which version of equivalence and triviality is used. For example, $Ext_{1a}(A, B)$ means
that we consider split extensions as “trivial” extensions and the equivalence is the unitary equivalence. For the continuous version of the approximate unitary equivalence we will use the notation $Ext_{\ast b^{cont}}$. $Ext_{\ast\ast}(A, B)$ is always a semigroup (since $B$ is stable).

 \bigskip

 \medskip

  In \cite{MT15} Manuilov and Thomsen obtained  the following result.

 \begin{theorem}(\cite{MT15}, \cite[Th. 3.3]{Manuilov}) Let $A$ be a separable C*-algebra and $B$ be a $\sigma$-unital C*-algebra. Then all $Ext_{\ast\ast}(SA, B)$ (except for the cases where $\ast\ast$ is (0a), (0b), (1a) and (1b)) coincide and are groups. \end{theorem}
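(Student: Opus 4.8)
The plan is to treat this as the Manuilov--Thomsen theorem and to organize the many semigroups $Ext_{\ast\ast}(SA,B)$ appearing in the statement into a single commutative grid of natural comparison maps, and then to show that the grid collapses precisely because $SA$ is a suspension. First I would record the obvious maps. Enlarging the class of extensions declared ``trivial'' (split $\subseteq$ asymptotically split $\subseteq$ discretely asymptotically split, and the zero extension sitting below all of these) and weakening the equivalence relation (unitary $\Rightarrow$ approximate unitary $\Rightarrow$ homotopy $\Rightarrow$ weak homotopy, where unitary $\Rightarrow$ homotopy uses that $U(M(B))$ is connected for stable $B$, so a conjugating unitary can be rotated to $1$, and homotopy $\Rightarrow$ weak homotopy uses the canonical map $Q(B)[0,1]\to Q(B[0,1])$) induce semigroup surjections among all the $Ext_{\ast\ast}(SA,B)$ in the statement, with $Ext_{3d}$ the common quotient at the bottom and $Ext_{0c},Ext_{2a}$ the sources at the top of the \emph{included} part of the grid. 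Hence it suffices to invert each of these surjections, i.e.\ to prove $Ext_{0c}(SA,B)=Ext_{2a}(SA,B)=Ext_{3d}(SA,B)$, and that one of them is a group.

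Next I would establish the group structure on $Ext_{3d}(SA,B)$ (equivalently on any node, once the identifications are proved) by the standard argument for half-exact homotopy functors on a suspension: for $\tau\colon SA\to Q(B)$, its reverse $\bar\tau$ obtained by flipping the suspension coordinate $t\mapsto 1-t$ is an additive inverse, because the Bott/rotation homotopy conjugating $\tau\oplus 0$ to $0\oplus\tau$ inside $M_2(Q(B))\cong Q(B)$ (absorption is available since $B$ is stable) exhibits $\tau\oplus\bar\tau$ as null-homotopic, hence stably trivial for \emph{any} of the triviality notions. This uses only homotopy equivalence, so every node in the grid is a group. The remaining, and genuinely substantial, task is to collapse the triviality direction and to straighten weak homotopies; this is where the suspension coordinate of $SA$ does the work, and it is the step I expect to be the main obstacle.

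Concretely, the heart of the matter is: a discretely asymptotically split extension of $SA$ is, after adding an absorbing trivial extension, homotopic to a split one. If $\lambda\colon SA\to Q(B)$ lifts to a discrete asymptotic homomorphism $(f_n)\colon SA\to M(B)$, one interpolates $(f_n)$ to a continuous asymptotic homomorphism into $M(B)$ (a Connes--Higson-type interpolation, available because the defect terms lie in $B$), and then reparametrises, running the asymptotic parameter along the open interval in $SA=C_0((0,1))\otimes A$ to manufacture an honest $\ast$-homomorphism splitting an extension homotopic to $\lambda$; the continuous-parameter nodes $Ext_{\ast b^{\mathrm{cont}}}$ fall out of the same interpolation. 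Dually, a $\ast$-homomorphism $SA\to Q(B[0,1])$ becomes, after reparametrising the $[0,1]$-factor against the suspension factor, homotopic to one valued in $Q(B)[0,1]$, giving $Ext_{\ast c}=Ext_{\ast d}$. Finally, to pull in the unitary-equivalence nodes $(2a),(2b),(3a),(3b)$ I would invoke the Kasparov--Voiculescu absorption theorem: modulo a fixed absorbing asymptotically split trivial extension, homotopic extensions become approximately unitarily equivalent and approximate unitary equivalence becomes unitary equivalence, so these nodes are squeezed between $Ext_{\ast a}$ and $Ext_{\ast c}$ and therefore agree with the rest. Assembling the identifications along the grid yields $Ext_{0c}(SA,B)=\cdots=Ext_{3d}(SA,B)$, all of which are groups, which is the assertion. (I note that the homotopy lifting theorems of the present paper are not needed here; they are exactly what is needed to \emph{unsuspend} some of these equalities, which is the subject of Theorem~\ref{ExtCoincide}.)
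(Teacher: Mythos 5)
This statement is not proved in the paper: it is quoted verbatim from Manuilov--Thomsen \cite{MT15} (see also \cite[Th.~3.3]{Manuilov}) purely as background for Theorem \ref{ExtCoincide}, so there is no in-paper argument to compare yours with. Your proposal therefore has to stand or fall as an attempted proof of the Manuilov--Thomsen theorem itself. Its architecture is sound and matches how the result is actually organized in the literature: the grid of comparison surjections induced by enlarging the triviality class and weakening the equivalence relation, the group structure coming from the suspension coordinate, and your closing remark correctly identifying that the present paper's homotopy lifting theorems are for \emph{unsuspending} some of these equalities, not for proving them.

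The gap is that the two load-bearing steps are asserted rather than proved, and the results you cite for them do not cover them. First, the step ``modulo a fixed absorbing asymptotically split trivial extension, homotopic extensions become (approximately) unitarily equivalent'' is precisely the main content of \cite{MT15}: it is obtained there by proving that the Connes--Higson map from $Ext_{2a}(SA,B)$ to the $E$-theory group $[[S^2A\otimes\mathcal K, B\otimes\mathcal K]]$ is an isomorphism. It does not follow from Voiculescu/Kasparov absorption, which concerns split (weakly nuclear) trivial extensions and does not yield homotopy invariance for arbitrary separable $A$ with the asymptotic-splitting notion of triviality; invoking an ``absorption theorem'' at this point assumes the conclusion. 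Second, the claims that a (discretely) asymptotically split extension of $SA$ becomes, after stabilization, equivalent to a split one, and that a weak homotopy $SA\to Q(B[0,1])$ can be straightened to a genuine one, are themselves nontrivial theorems of \cite{MT15}; the heuristic of ``running the asymptotic parameter along the open interval of the suspension'' is the right intuition (it is how Connes--Higson fold an extension into an asymptotic morphism) but, as stated, does not control the Busby invariant up to the required equivalence. A smaller point: the rotation homotopy $\tau\oplus 0\sim 0\oplus\tau$ does not exhibit $\tau\oplus\bar\tau$ as null-homotopic; for that you need the concatenation/shrinking homotopy in the suspension variable, after which one still has to know that null-homotopic extensions are trivial for each of the triviality notions in play.
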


 Here we will show that some of those equalities  can be unsuspended.

 \begin{lemma}\label{unit} Asymptotically split extensions exist and represent the unit element of $Ext_{2, \ast}(A, B)$.
 \end{lemma}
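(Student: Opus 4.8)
\textbf{Proof plan for Lemma \ref{unit}.}

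The statement is really two claims: first, that asymptotically split extensions of $A$ by $B$ exist; second, that any such extension represents the identity of the semigroup $Ext_{2,\ast}(A,B)$ — that is, with respect to the equivalence indexed by $\ast$ (unitary, approximate unitary, homotopy, or weak homotopy), an asymptotically split extension is the neutral element. The plan is to produce a concrete asymptotically split extension and then show that adding it to any extension changes nothing up to stable equivalence.

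\emph{Existence.} Since $B$ is stable, $B\cong B\otimes K$, and $M(B)$ contains an infinite family of isometries $\{s_i\}$ with orthogonal ranges (a copy of $\mathcal{O}_\infty$ or just $s_1,s_2$ with $s_1s_1^*+s_2s_2^*\le 1$). Using $B\cong B\otimes K\cong B\otimes\ell^2(\mathbb N)\otimes\ell^2(\mathbb N)$ one gets a unital embedding of $M(B)\cong M(B\otimes K)$ that ``makes room'': the standard trick is that the infinite-repeat homomorphism $\iota_\infty\colon M(B)\to M(B)$, $x\mapsto \bigoplus_\infty x$, composed with any faithful $\ast$-homomorphism $A\to M(B)$ (which exists since $A$ is separable and $B$ is stable $\sigma$-unital — e.g. via a faithful representation on a separable Hilbert space tensored into $B\otimes K$), gives a $\ast$-homomorphism $f\colon A\to M(B)$; then $\lambda:=q\circ f$ is a split extension, hence trivially also asymptotically split (take the constant asymptotic homomorphism $f_t\equiv f$). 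So asymptotically split extensions certainly exist. I would simply cite the standard construction of a trivial (split) extension and remark that split $\Rightarrow$ asymptotically split.

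\emph{Neutrality.} Let $\tau\colon A\to Q(B)$ be any extension and let $\lambda\colon A\to Q(B)$ be asymptotically split, with asymptotic lift $(f_t)\colon A\to M(B)$, $q\circ f_t=\lambda$. I must show $\tau\oplus\lambda$ is stably equivalent to $\tau$ in the relevant sense, i.e. there are ``trivial'' $\mu_0,\mu_1$ with $\tau\oplus\lambda\oplus\mu_0\approx\tau\oplus\mu_1$ — but since the neutral element of a semigroup with a zero-like absorbing structure is characterized by $\tau\oplus\lambda\sim\tau$ for all $\tau$, it suffices to exhibit this directly. The key point is that in $Ext_{2,\ast}$ the ``trivial'' extensions are exactly the asymptotically split ones, and the Eilenberg swindle applies: form $\lambda^{\oplus\infty}=\lambda\oplus\lambda\oplus\cdots$, which is again asymptotically split (the asymptotic lifts assemble, using stability of $B$ to absorb the infinite direct sum into $M(B)$), and then
$$\tau\oplus\lambda^{\oplus\infty}\;\approx\;(\tau\oplus\lambda)\oplus\lambda^{\oplus\infty},$$
by a unitary in $M(B)$ implementing the shift of summands; reading this modulo the definition of stable equivalence gives $\tau\oplus\lambda\sim\tau$. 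For the homotopy versions one instead notes that an asymptotically split extension is connected to the zero extension: the asymptotic homomorphism $(f_t)$ together with the reparametrization trick (as in the proof of Theorem \ref{DiscreteHomotopyLiftingAsHom}) yields a path, or more simply one invokes that asymptotically split extensions lie in the connected component of $0$ in the appropriate topology on $\mathrm{Hom}(A,Q(B))$.

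\emph{Main obstacle.} The delicate point is not the swindle but matching it to the precise equivalence relation ``$\ast$'' in $Ext_{2,\ast}$: for unitary equivalence (a) one needs the shift unitary to actually lift to $M(B)$ (fine, since it is built from the isometries $s_i\in M(B)$ already), while for homotopy equivalence (c)/(d) one must produce an honest $\ast$-homomorphism $A\to Q(B)[0,1]$ (resp. $Q(B[0,1])$) connecting $\tau\oplus\lambda$ to $\tau$ — here the asymptotic (not strict) nature of the lift of $\lambda$ is exactly what is needed, via the standard ``rotation homotopy'' $t\mapsto R_t(\tau\oplus\lambda)R_t^*$ interpolating between $\tau\oplus\lambda$ and $\lambda\oplus\tau$, combined with the fact that $\lambda\oplus(\,\cdot\,)$ absorbs into $\tau$ after stabilizing. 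I expect the bookkeeping for cases (c) and (d) to be the part requiring care; everything else is the classical Ext machinery recalled above.
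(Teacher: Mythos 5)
The existence argument is essentially the one in the paper: a split extension obtained from an infinite‐repeat of a faithful representation. For neutrality, however, the paper does something much simpler than what you propose, and your extra machinery introduces gaps that the paper's approach avoids entirely.

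The paper's neutrality argument uses \emph{only} associativity: $(\tau\oplus\lambda)\oplus\lambda \approx \tau\oplus(\lambda\oplus\lambda)$. This is a unitary equivalence implemented by a fixed permutation-of-slots unitary in $M(B)$, so it holds for \emph{every} choice of $\ast$ simultaneously (and a fortiori for the weaker equivalences). Since $\lambda$ and $\lambda\oplus\lambda$ are both asymptotically split, this displayed relation is \emph{already} a witness for the stable equivalence $[\tau\oplus\lambda] = [\tau]$ in the sense of the definition (take $\lambda_0 = \lambda$ and $\lambda_1 = \lambda\oplus\lambda$). No swindle and no absorption is needed. By contrast, you reach for the Eilenberg swindle $\tau\oplus\lambda^{\oplus\infty}\approx(\tau\oplus\lambda)\oplus\lambda^{\oplus\infty}$, which forces you to (a) justify that $\lambda^{\oplus\infty}$ is a well-defined Busby invariant with an asymptotic lift, and (b) argue a shift unitary exists in $M(B)$ — both doable, but unnecessary. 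Worse, you then say that ``for the homotopy versions one instead notes that an asymptotically split extension is connected to the zero extension,'' which is a genuine gap: the trivial elements of $Ext_{2,c}$ and $Ext_{2,d}$ are asymptotically split extensions, not zero extensions, and homotopy-to-zero is neither obviously available nor the relevant notion. You did not need a separate argument for the homotopy cases at all — your shift unitary already gives a unitary equivalence, which implies homotopy equivalence by Lemma~\ref{variousEquivalencesExtensiions}(i). The observation you should have made is that a single unitary-equivalence computation suffices for every subscript~$\ast$; once you see that, the associativity identity is the cleanest such computation and the swindle is superfluous.
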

 \begin{proof} Since $B\cong B\otimes K$, $Q(B) \supset B(H)/K(H)$. Let $\pi: A\to B(H)$ be a faithful representation. Then the extension
 $\lambda: = q\circ \pi^{(\infty)}$ splits (not only asymptotically).

 For any $\tau: A \to Q(B)$ we have $$(\tau\oplus \lambda)\oplus \lambda \approx \tau \oplus (\lambda\oplus \lambda),$$ with both $\lambda$ and $\lambda\oplus \lambda$ being asymptotically split. Therefore $[\tau]+[\lambda] = [\tau\oplus\lambda] = [\tau].$ Thus $[\lambda]$ is the unit of
 $Ext_{2, \ast}(A, B)$.
 \end{proof}

 \begin{lemma}\label{variousEquivalencesExtensiions} Let $\tau_0, \tau_1: A \to Q(B)$ be two extensions. Then
 \begin{itemize}
 \item[(i)] If $\tau_0$ and  $\tau_1$ are unitary equivalent, then they are homotopic,

  \item[(ii)] If $\tau_0$ and  $\tau_1$ are continuously approximately unitary equivalent, then they are homotopic,

  \item[(iii)] If $\tau_0$ and  $\tau_1$ are approximately unitary equivalent, then they are homotopic as discrete asymptotic homomorphisms.

  \end{itemize}
 \end{lemma}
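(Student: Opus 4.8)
\emph{Plan.} I would deduce all three implications from one classical fact about multiplier algebras of stable C*-algebras: since $B$ is stable, the unitary group $\mathcal U(M(B))$ is path-connected in norm, so its image $q\bigl(\mathcal U(M(B))\bigr)\subset\mathcal U(Q(B))$ is path-connected and contains $1$; in particular, given any unitary $u\in M(B)$ one can choose a norm-continuous path of unitaries from $1$ to $u$ in $M(B)$, i.e.\ one may ``turn on'' a conjugation. For part (i): given $U\in M(B)$ with $q(U)^*\tau_0 q(U)=\tau_1$, fix a norm-continuous path $U_t$, $t\in[0,1]$, of unitaries in $M(B)$ with $U_0=1$, $U_1=U$, and define $T\colon A\to C([0,1],Q(B))=Q(B)[0,1]$ by $T(a)(t)=q(U_t)^*\tau_0(a)q(U_t)$. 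For each fixed $t$ this is conjugation by a unitary of $Q(B)$, so $T$ is a $\ast$-homomorphism; it is norm-continuous in $t$ because $t\mapsto U_t$ is; and $ev_0\circ T=\tau_0$, $ev_1\circ T=\tau_1$, which is the required homotopy.

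\emph{Part (ii).} Here we are given a norm-continuous path of unitaries $U_t\in M(B)$, $t\in[1,\infty)$, with $\|q(U_t)^*\tau_0(a)q(U_t)-\tau_1(a)\|\to 0$ for each $a\in A$. I would build $T$ in two stages: on $[0,\tfrac12]$ let $T$ conjugate $\tau_0$ along a path from $1$ to $U_1$, as in (i); on $[\tfrac12,1)$ set $T(a)(s)=q(U_{r(s)})^*\tau_0(a)q(U_{r(s)})$, where $r\colon[\tfrac12,1)\to[1,\infty)$ is an increasing homeomorphism with $r(\tfrac12)=1$; and set $T(a)(1):=\tau_1(a)$. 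The hypothesis says exactly that $T(a)$ extends continuously to $s=1$; every slice $a\mapsto T(a)(s)$ is a $\ast$-homomorphism (a conjugate of $\tau_0$, or $\tau_1$ at the endpoint), so $T\colon A\to Q(B)[0,1]$ is a $\ast$-homomorphism with $ev_0\circ T=\tau_0$ and $ev_1\circ T=\tau_1$.

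\emph{Part (iii).} Now the hypothesis supplies only a sequence $U_n\in M(B)$ of unitaries with $\|q(U_n)^*\tau_0(a)q(U_n)-\tau_1(a)\|\to 0$, and we must produce a discrete asymptotic homomorphism $(\Phi_n)_n\colon A\to Q(B)\otimes C[0,1]\cong C([0,1],Q(B))$ with $ev_0\circ\Phi_n=\tau_0$, $ev_1\circ\Phi_n=\tau_1$ for all $n$ (a homotopy of discrete asymptotic homomorphisms in the sense of the Preliminaries, viewing $\tau_0,\tau_1$ as constant families). For each $n$ fix a norm-continuous path $U_{n,r}$, $r\in[0,1]$, of unitaries in $M(B)$ from $1$ to $U_n$, put $\rho_n:=q(U_n)^*\tau_0(\cdot)q(U_n)$, and set
$$\Phi_n(a)(t)=\begin{cases} q(U_{n,2t})^*\,\tau_0(a)\,q(U_{n,2t}), & 0\le t\le \tfrac12,\\[2pt] (2-2t)\,\rho_n(a)+(2t-1)\,\tau_1(a), & \tfrac12\le t\le 1.\end{cases}$$
The two formulas agree at $t=\tfrac12$, so $\Phi_n(a)\in C([0,1],Q(B))$; each $\Phi_n$ is linear and $\ast$-preserving with $ev_0\circ\Phi_n=\tau_0$, $ev_1\circ\Phi_n=\tau_1$; and $\sup_n\|\Phi_n(a)\|\le\|a\|$ since conjugations and convex combinations are contractive. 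On $[0,\tfrac12]$ the map $a\mapsto\Phi_n(a)(t)$ is exactly multiplicative, while on $[\tfrac12,1]$ it is a convex combination of the $\ast$-homomorphisms $\rho_n$ and $\tau_1$, so $\sup_t\|\Phi_n(ab)(t)-\Phi_n(a)(t)\Phi_n(b)(t)\|$ is $O\bigl(\max_{c\in\{a,b,ab\}}\|\rho_n(c)-\tau_1(c)\|\bigr)\to 0$. Hence $(\Phi_n)_n$ is a discrete asymptotic homomorphism realizing the homotopy.

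\emph{Main point / expected obstacle.} There is no deep step: the lemma reduces entirely to path-connectedness of $\mathcal U(M(B))$, which is precisely where stability of $B$ is used, together with the elementary observation that a straight-line interpolation between two $\ast$-homomorphisms that converge to one another is asymptotically multiplicative. The one place requiring care is keeping the endpoints of the homotopies \emph{exact} rather than merely approximate; this is what forces the preliminary ``path from $1$ to $U_1$'' in (ii) and the linear tail on $[\tfrac12,1]$ in (iii).
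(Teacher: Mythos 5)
Parts (i) and (ii) follow the same route as the paper: in (i) you connect $U$ to $\mathbb 1$ by a norm-continuous path in $\mathcal U(M(B))$ (which uses stability of $B$, via Mingo's theorem) and conjugate $\tau_0$ along it; in (ii) you prepend such a path to the given continuous path $(U_t)_{t\ge 1}$ reparametrized onto a half-interval, and the limit condition supplies continuity at the endpoint. For part (iii) your construction genuinely differs from the paper's. The paper conjugates $\tau_0$ along a path to $U_\lambda$ on $[1/\lambda,1]$ and then, on the shrinking tail $(0,1/\lambda)$, interpolates linearly between conjugations by the successive $u_n,u_{n+1}$ (with $\tau_1$ at $t=0$), so each $\Phi_\lambda(a)(t)$ stays a convex combination of conjugates of $\tau_0(a)$ and the required estimates come from the approximate unitary equivalence of all the $u_n^*\tau_0 u_n$ with $\tau_1$. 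You instead keep the conjugation on the fixed half $[0,1/2]$ up to $U_n$ and close the loop with a single straight-line interpolation from $\rho_n=q(U_n)^*\tau_0(\cdot)q(U_n)$ to $\tau_1$ on $[1/2,1]$. This works, and the asymptotic multiplicativity estimate is in fact cleaner than you state: for $\ast$-homomorphisms $\pi,\sigma$ and $\Psi_s=(1-s)\pi+s\sigma$ one has the identity
\[
\Psi_s(ab)-\Psi_s(a)\Psi_s(b)=s(1-s)\bigl(\pi(a)-\sigma(a)\bigr)\bigl(\pi(b)-\sigma(b)\bigr),
\]
so $\sup_{t\in[1/2,1]}\|\Phi_n(ab)(t)-\Phi_n(a)(t)\Phi_n(b)(t)\|\le\tfrac14\|\rho_n(a)-\tau_1(a)\|\,\|\rho_n(b)-\tau_1(b)\|\to0$, which is even quadratic rather than $O(\max\|\rho_n(c)-\tau_1(c)\|)$. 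Both constructions yield pointwise linear, $\ast$-preserving, uniformly bounded maps $A\to Q(B)\otimes C[0,1]$ with the correct endpoint evaluations for every $n$, so both are valid; your version avoids the shrinking-interval reparametrization and the continuity-at-zero check (the paper's estimate (\ref{variousEquivalencesExtensiions1})), at the cost of leaving the ``pointwise conjugate of $\tau_0$'' form on $[1/2,1]$ — a cost with no downstream consequence here, as the paper's construction already leaves that form on its tail anyway.
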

 \begin{proof} (i): Suppose $q(U)^* \tau_0q(U) = \tau_1$. Since for a stable C*-algebra $B$, the unitary group of $M(B)$ is connected \cite{Mingo}, there is a continuous path of unitaries $U_t\in M(B)$ that connects $U$ with $\mathbb 1$. Then $T : A \to Q(B)[0, 1]$ defined by
 $$T(a)(t) = q(U_t)^* \tau_0(a) q(U_t)$$ is a homotopy between $\tau_1$ and $\tau_0$.

 (ii): Suppose $\lim_{t\to \infty}  \|q(U_t)^* \tau_0(a) q(U_t) - \tau_1(a)\| = 0$, where $U_t\in M(B)$, $t\in [1, \infty)$,  is a continuous path of unitaries. Let $\gamma: [1/2, 1]\to \mathcal U(M(B))$ be a continuous path such that $\gamma(1)= \mathbb 1, \gamma(1/2) = U_1$.
  For $a\in A$ we define an $M(B)$-valued function $\Phi(a)$ on $[0, 1]$ by

 $$\Phi(a)(t) = \begin{cases}  q(\gamma(t)^*)\tau_0(a) q(\gamma(t)), \;t \ge 1/2\\
 q(U_t)^*\tau_0(a)q(U_t), \;0<t<1/2\\
 \tau_1(a), \;t=0.
 \end{cases}$$
 Then $\Phi(a)\in M(B)[0, 1]$, for each $a\in A$. Define $\Phi: A \to M(B)[0, 1]$ by $a\mapsto \Phi(a)$. Then $\Phi$ is a $\ast$-homomorphism,  and $\Phi(a)(1) = \tau_0(a)$, $\Phi(a)(0) = \tau_1(a)$, for any $a\in A$.

 (iii): Suppose $\lim_{n\to \infty}  \|q(U_n)^* \tau_0(a) q(U_n) - \tau_1(a)\| = 0$, for some sequence of unitaries $U_n\in M(B)$. Let us denote $q(U_n)$ by $u_n$, for short. For $\lambda\in \mathbb N$ let $\gamma^{\lambda}: [1/\lambda, 1]\to \mathcal U(M(B))$ be a continuous path such that $\gamma^{\lambda}(1) = \mathbb 1, \gamma^{\lambda}(1/\lambda)= U_{1/\lambda}.$ For $\lambda\in \mathbb N$ and $a\in A$ we define an $M(B)$-valued function $\Phi_{\lambda}(a)$ on $[0, 1]$ by
 $$\Phi_{\lambda}(a)(t) = \begin{cases}  q(\gamma^{\lambda}(t))^*\tau_0(a)q(\gamma^{\lambda}(t)), \;t \ge 1/\lambda\\
 su_n^*\tau_0(a)u_n + (1-s)u_{n+1}^*\tau_0(a)u_{n+1}, \; 1/\lambda > t>0 \\ \;\text{and}\; t = s\frac{1}{n} + (1-s)\frac{1}{n+1}, \;\text{for some}\; s\in [0, 1]\\
 \tau_1(a), \;t=0.
 \end{cases}$$

 Clearly the function $\Phi_{\lambda}(a)$ is continuous at any point $t\neq 0$. Let us show that it is continuous also at $t=0$. Let $\epsilon>0$. There is $n_0$ such that for any $n> n_0$ one has
 $$\|\tau_1(a) - u_n^*\tau_0(a)u_n\|<\epsilon.$$ Then for $t< \min\{1/\lambda, 1/{n_0}\}$
 \begin{multline}\label{variousEquivalencesExtensiions1}  \|\Phi_{\lambda}(a)(t) - \Phi_{\lambda}(a)(0)\| = \|\Phi_{\lambda}(a)(t)-\tau_1(a)\|\\
 \|s(u_n^*\tau_0(a)u_n - \tau_1(a)) + (1-s)(u_{n+1}^*\tau_0(a)u_{n+1} - \tau_1(a))\|\le \epsilon.
 \end{multline}
 Thus $\Phi_{\lambda}(a)\in M(B)[0, 1]$. We define $\Phi_{\lambda}: A \to M(B)[0, 1]$ by $a\mapsto \Phi_{\lambda}(a)$.
 Let us show that $\Phi_{\lambda}$, $\lambda\in \mathbb N$, is a discrete asymptotic homomorphism.
 Fix $a_1, a_2\in A$ and $\epsilon>0$. Then for any $\lambda> n_0$, and for $ t = s\frac{1}{n}+(1-s)\frac{1}{n+1}< 1/{\lambda}$, we obtain, using (\ref{variousEquivalencesExtensiions1}),
 \begin{multline*}\|\left(\Phi_{\lambda}(a_1)\Phi_{\lambda}(a_2) - \Phi_{\lambda}(a_1a_2)\right)(t)\| \\= \|(\Phi_{\lambda}(a_1)(t)-\tau_1(a_1))\Phi_{\lambda}(a_2)(t) + \tau_1(a_1)(\Phi_{\lambda}(a_2)(t)-\tau_1(a_2)) + \tau_1(a_1a_2) - \Phi_{\lambda}(a_1a_2)(t)\| \\ \le \epsilon \max\{\|\Phi_{\lambda}(a_2)\|, \|a_1\|, 1\}.
 \end{multline*}
 If either $t> 1/{\lambda}$ or $t=0$, then
 $$\left(\Phi_{\lambda}(a_1)\Phi_{\lambda}(a_2) - \Phi_{\lambda}(a_1a_2)\right)(t) =0.$$
 This shows that $$\lim_{\lambda\to 0} \|\Phi_{\lambda}(a_1)\Phi_{\lambda}(a_2)-\Phi_{\lambda}(a_1a_2)\|=0, $$
 for any $a_1, a_2\in A$. Asymptotic linearity and self-adjointness are checked similarly.

 \end{proof}

The following lemma establishes the connection between various versions of $Ext_{\ast\ast}$ being a group.

\begin{lemma}\label{connectionGroups} \begin{itemize}

\item[(1)]  $Ext_{2\ast}(A, B)$ is a group $\Rightarrow$  $Ext_{3\ast}(A, B)$ is a group.

\item[(2)]    $Ext_{2b^{cont}}(A, B)$ is a group $\Rightarrow$  $Ext_{2b}(A, B)$ is  a group.

\item[(3)]  $Ext_{3b^{cont}}(A, B)$ is a group $\Rightarrow$  $Ext_{3b}(A, B)$ is  a group.

\item[(4)]  $Ext_{2a}(A, B)$ is a group $\Leftrightarrow$  $Ext_{2b^{cont}}(A, B)$ is a group $\Leftrightarrow$  $Ext_{2c}(A, B)$ is  a group.

\item[(5)]  $Ext_{3a}(A, B)$ is a group $\Leftrightarrow$  $Ext_{3b^{cont}}(A, B)$ is a group $\Leftrightarrow$  $Ext_{3c}(A, B)$ is  a group.

\end{itemize}
\end{lemma}
\begin{proof} 1), 2), 3) follow from the definition of $Ext_{\ast\ast}$.

\medskip

4) From Lemma \ref{variousEquivalencesExtensiions} and the definition of $Ext_{\ast\ast}$ it follows that $Ext_{2a}(A, B)$ is a group $\Rightarrow$  $Ext_{2b^{cont}}(A, B)$ is a group $\Rightarrow$  $Ext_{2c}(A, B)$ is  a group.
We need to prove the implication $Ext_{2c}(A, B)$ is a group $\Rightarrow$  $Ext_{2a}(A, B)$ is  a group.

Let $[\tau]_{2a}\in Ext_{2a}(A, B)$. Since $Ext_{2c}(A, B)$ is a group, there exists an extension $\tau'$ such that $[\tau]_{2c} + [\tau']_{2c}=e_{2c}$, where by $e_{2c}$ we denote the unit element of $Ext_{2c}(A, B)$. By Lemma \ref{unit}, this means that there exist asymptotically split extensions $\lambda$, $\lambda'$ and $\lambda''$ such that $\tau\oplus\tau'\oplus\lambda'$ is homotopic to $\lambda\oplus \lambda''$. Since $\lambda\oplus \lambda''$ asymptotically splits, by Theorem \ref{DiscreteHomotopyLifting} so does $\tau\oplus\tau'\oplus\lambda'$. This implies $[\tau]_{2a} + [\tau'\oplus\lambda']_{2a}=e_{2a}$ and thus $[\tau]_{2a}$ has an inverse element.

\medskip

5) is proved along the same lines as 4).

\end{proof}

 \begin{theorem}\label{ExtCoincide} Let $A$ be a separable C*-algebra and $B$   any C*-algebra.
 If any of $Ext_{2, a}(A, B),\; Ext_{2, b^{cont}}(A, B), \;Ext_{2, c}(A, B)$ is a  group, then they all coincide.
 If any of $Ext_{3, a}(A, B),\; Ext_{3, b^{cont}}(A, B), \;Ext_{3, c}(A, B)$ is a group, then they all coincide.
 \end{theorem}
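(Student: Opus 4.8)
The plan is to realise each of the two triples as a chain of natural surjective semigroup homomorphisms and to show, by means of the homotopy lifting theorem, that the composite of the chain has trivial kernel; since a surjective homomorphism between groups with trivial kernel is an isomorphism, this forces the three semigroups of the triple to coincide.

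First I would observe that the three equivalence relations in play are successively coarser: unitary equivalence is the special case of continuous approximate unitary equivalence given by a constant path of unitaries, and continuous approximate unitary equivalence implies homotopy equivalence by Lemma~\ref{variousEquivalencesExtensiions}(ii). All three $Ext$-semigroups in a given triple use the same notion of trivial extension (asymptotically split for the first triple, discretely asymptotically split for the second) and the same direct sum, so coarsening the relation on extensions induces semigroup homomorphisms
\[
Ext_{2, a}(A, B) \xrightarrow{\rho_1} Ext_{2, b^{cont}}(A, B) \xrightarrow{\rho_2} Ext_{2, c}(A, B),
\]
each of them surjective, being induced by the identity map on the set of extensions; the same applies with the subscript $2$ replaced by $3$.

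The heart of the matter is to prove that $\rho_2 \circ \rho_1$ has trivial kernel. Given an extension $\tau \colon A \to Q(B)$ with $[\tau] = 0$ in $Ext_{2, c}(A, B)$, the definition of stable equivalence together with Lemma~\ref{unit} produces asymptotically split extensions $\lambda_0, \lambda_1, \sigma \colon A \to Q(B)$ with $\tau \oplus \lambda_0$ homotopy equivalent (as extensions) to $\sigma \oplus \lambda_1$. Conjugating asymptotic homomorphism lifts of $\sigma$ and $\lambda_1$ by two isometries in $M(B)$ with orthogonal, complementary ranges (available because $B$ is stable) yields an asymptotic homomorphism $A \to M(B)$ lifting $\sigma \oplus \lambda_1$, so $\sigma \oplus \lambda_1$ is again asymptotically split. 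Then I would apply Theorem~\ref{DiscreteHomotopyLifting} with domain $A$ (which is separable), $D = M(B)$ and $I = B$, so that $D/I = Q(B)$: the $\ast$-homomorphisms $\tau \oplus \lambda_0$ and $\sigma \oplus \lambda_1$ are homotopic and the latter lifts to an asymptotic homomorphism, hence so does $\tau \oplus \lambda_0$; thus $\tau \oplus \lambda_0$ is asymptotically split, so $[\tau \oplus \lambda_0] = 0$ in $Ext_{2, a}(A, B)$, and since $[\lambda_0] = 0$ there, $[\tau] = 0$ in $Ext_{2, a}(A, B)$. The identical argument, using the discrete version of Theorem~\ref{DiscreteHomotopyLifting} and the evident analogue of Lemma~\ref{unit} for discretely asymptotically split extensions, handles the triple $(3, a), (3, b^{cont}), (3, c)$.

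Finally I would assemble the pieces: assuming the three semigroups in the triple are groups, $\rho_1$ and $\rho_2$ are group homomorphisms, and $\rho_2 \circ \rho_1$ is a surjective group homomorphism with trivial kernel, hence an isomorphism; in particular $\rho_1$ is injective, and being also surjective it is an isomorphism, so $\rho_2 = (\rho_2 \circ \rho_1) \circ \rho_1^{-1}$ is an isomorphism as well, and the three groups coincide. I expect the only genuine obstacle to be the kernel computation, which rests squarely on Theorem~\ref{DiscreteHomotopyLifting}; the rest is careful bookkeeping with stable equivalence and direct sums, the one point needing attention being that a direct sum of (discretely) asymptotically split extensions is again (discretely) asymptotically split.
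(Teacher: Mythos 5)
Your proposal is correct and follows essentially the same route as the paper: build the chain of natural surjective semigroup homomorphisms induced by coarsening the equivalence relation, identify the unit class with the class of (discretely) asymptotically split extensions via Lemma~\ref{unit}, and use Theorem~\ref{DiscreteHomotopyLifting} (with $D = M(B)$, $I = B$) to show that an extension stabilising to a homotopically trivial one must already be asymptotically split, so the composite map has trivial kernel. The only differences are cosmetic (explicitly naming the isometry construction for direct sums, and phrasing injectivity of the two maps via injectivity of the composite), so there is nothing substantive to add.
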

 \begin{proof}
 There are natural well-defined surjective homomorphisms  $$j_1: Ext_{2, a}(A, B) \to  Ext_{2, b^{cont}}(A, B)$$ and,
  by Lemma \ref{variousEquivalencesExtensiions}, $$j_2: Ext_{2, b^{cont}}(A, B)\to Ext_{2, c}(A, B)$$ that send the class of an extension to the class of the same extension. We need to prove that they are injective. It is sufficient to prove that the homomorphism $j:= j_2\circ j_1: Ext_{2, a}(A, B) \to
 Ext_{2, c}(A, B)$  is injective. By the assumption and Lemma \ref{connectionGroups} 4), both  $Ext_{2, a}(A, B)$ and $Ext_{2, c}(A, B)$ are groups. Hence it is sufficient to check that the kernel  of $j$ consists of the unit element $e_{2,a}$ of $Ext_{2, a}(A, B)$. So suppose $[\tau]_{2,c} = j([\tau]_{2, a}) = e_{2, c}.$ By Lemma \ref{unit}, $e_{2, c} = [\lambda]_{2, c}$, for some (in fact, any) asymptotically split extension $ \lambda$. Therefore there are asymptotically split extensions $\lambda', \lambda''$ such that
 $$\tau \oplus \lambda' \sim_h \lambda\oplus \lambda''.$$ Since $\lambda\oplus \lambda''$ asymptotically splits,  by Theorem \ref{DiscreteHomotopyLifting} so does  $\tau \oplus \lambda'$. By Lemma \ref{unit}, $ e_{2, a}= [\tau \oplus \lambda'] = [\tau].$

 The second statement is proved along the same lines.
 \end{proof}

 \begin{remark} It does not seem possible to use the statement (iii) from Lemma \ref{variousEquivalencesExtensiions} to compare  $Ext_{3, b}(A, B)$ with  $Ext_{3, a}(A, B)$  and $Ext_{3, c}(A, B)$. The reason is that the homotopy lifting theorem for asymptotic homomorphisms (Theorem \ref{DiscreteHomotopyLiftingAsHom}) does not seem to admit a discrete version -- see Remark \ref{noDiscreteVersion}.
 \end{remark}

 \begin{corollary}
 Let $A$ be a separable C*-algebra and $B$   any C*-algebra. Then
 \begin{itemize}

\item[(1)] $Ext_{2, a}(qA, B) = Ext_{2, b^{cont}}(qA, B) = Ext_{2, c}(qA, B)$,

\item[(2)]  $Ext_{3, a}(qA, B)= Ext_{3, b^{cont}}(qA, B)= Ext_{3, c}(qA, B)$.
\end{itemize}
 \end{corollary}
 \begin{proof} Since $Q(B\otimes K) \cong M_2(Q(B\otimes K))$, the proof of \cite[Th. 5.1.6]{JensenThomsen} shows that for any extension $\tau: qA \to Q(B\otimes K)$
there is an extension $\tau': qA \to Q(B\otimes K)$ such that $\tau\oplus \tau'$ is homotopic to zero. Therefore $[\tau]_{2c} + [\tau']_{2c} = e_{2c}$ and $[\tau]_{3c} + [\tau']_{3c} = e_{3c}$. Thus $Ext_{2c}(qA, B)$ and $Ext_{3c}(qA, B)$ are groups. The statement follows now from Theorem \ref{ExtCoincide}.

 \end{proof}

While $Ext_{\ast\ast}(A, B)$ need not be a group in general,  one can consider
its subset $Ext_{\ast\ast}^{-1}(A, B)$ of invertible elements, which is always a group. The following theorem is proved by the same argument as Theorem \ref{ExtCoincide}.

 \begin{theorem} Let $A$ be a separable C*-algebra and $B$   any C*-algebra. Then
 \begin{itemize}

\item[(1)] $Ext_{2, a}^{-1}(A, B) = Ext_{2, b^{cont}}^{-1}(A, B) = Ext_{2, c}^{-1}(A, B)$,

\item[(2)]  $Ext_{3, a}^{-1}(A, B)= Ext_{3, b^{cont}}^{-1}(A, B)= Ext_{3, c}^{-1}(A, B)$.
\end{itemize}
\end{theorem}

\subsection{Non-semi-invertible extensions}

To construct an extension that does not have an inverse in some of the semigroups $Ext_{\ast\ast}$ is a highly non-trivial task. Several examples of extensions that are not invertible in the classical extension semigroup $Ext_{1a}$
 are now known. One of the first such examples was constructed by Wassermann \cite{Wassermann} and involved property (T) groups.

In  \cite{MT2} Manuilov and Thomsen proved that Wassermann's extension is not even invertible in the semigroup $Ext_{2a}$. In other words, its direct sum with any  extension does not merely fail to split; it does not even asymptotically split.  They called such extensions non-semi-invertible. They also proved that a certain modification of Wassermann's extension is not invertible even up to homotopy, that is non-invertible in $Ext_{1c}$.

We show here that there is no need to consider a modification of Wassermann's extension -- Wassermann's extension itself is not invertible up to homotopy. In fact, it is not even semi-invertible up to homotopy, that is, it is non-invertible in $Ext_{2c}$, as the following proposition shows.

\begin{proposition} If an extension is not semi-invertible, then it is also not semi-invertible up to homotopy.
\end{proposition}
\begin{proof}
Suppose an extension $\tau: A \to Q(B)$ is semi-invertible up to homotopy. This means that there is an extension $\tau': A \to Q(B)$ such that $\tau\oplus \tau'$ is homotopic to an extension that asymptotically splits. By Theorem \ref{DiscreteHomotopyLifting}, $\tau\oplus \tau'$ itself asymptotically splits. Hence $\tau$ is semi-invertible.
\end{proof}

\end{document}